\theoremstyle{plain} 
\newtheorem{theorem}[subsection]{Theorem}
\newtheorem{proposition}[subsection]{Proposition}
\newtheorem{lemma}[subsection]{Lemma}
\theoremstyle{definition}
\theoremstyle{remark}
\newtheorem*{remark*}{Remark}
\numberwithin{equation}{subsection}
\newcommand{\set}[1]{\{#1\}}
\newcommand{\mset}[1]{\set{\,#1\,}}
\newcommand{\pair}[1]{\langle #1\rangle}
\newcommand{\mpair}[1]{\pair{\,#1\,}}
\DeclareMathOperator{\sgn}{{\mathrm{sgn}}}
\DeclareMathOperator{\cone}{{\mathrm{cone}}}
\DeclareMathOperator{\linspan}{{\mathrm{span}}}
\DeclareMathOperator{\Hom}{{\mathrm{Hom}}}
\newcommand{\CP}{\mathcal{P}}
\begin{document}

\title[Minimal infinite reduced words of affine Weyl groups]{Reduced expression of minimal infinite reduced words of affine Weyl groups}

\author{Weijia Wang}
\address{Shing-Tung Yau Center
\\ Southeast University \\
Nanjing, Jiangsu, 210000 \\ China}
\email{wangweij5@mail.sysu.edu.cn}

\begin{abstract}
For an infinite Coxeter system, one can extend the weak right order to the set of infinite reduced words. This is called limit weak order.
In [Transformation Groups 18(1), 2013, 179-231], Lam and Pylyavskyy showed that for affine Weyl groups of type $\widetilde{A}_n$  minimal infinite reduced words under the limit weak order are precisely those infinite Coxeter elements and asked the question of  characterization, in terms of infinite reduced words, of the minimal elements
of the limit weak order for other affine types. In this paper we answer this question by characterizing the minimal infinite reduced words for other irreducible affine Weyl groups by one of their reduced expressions.
\end{abstract}
\maketitle
\section{Introduction}\label{bkg}

Enumeration of the reduced expressions of an element in a Coxeter group is an important theme in the study of the combinatorics of Coxeter groups. For an infinite Coxeter group, one can consider the infinite reduced words (with respect to the simple reflections), which are studied for their interplay with the Tits boundary, twisted weak order, biclosed sets and dynamic system etc. Finding the reduced expressions of an infinite reduced word is a similarly natural and interesting combinatorics question. However little is known about the reduced expressions of infinite reduced words. In \cite{Lam2}, Lam and Pylyavskyy showed that for affine Weyl groups of type $\widetilde{A}_n$  minimal infinite reduced words under the limit weak order are precisely those infinite Coxeter elements, i.e. they have a reduced expression of the form $(s_{i_1}\cdots s_{i_{n+1}})^{\infty}$ where $\widetilde{S}=\{s_{i_1},\cdots, s_{i_{n+1}}\}$ ($\widetilde{S}$ is the set of simple reflections for that affine Weyl group). Among infinite reduced words, minimal ones are naturally first objects to investigate.  In this paper we obtain similar results for other irreducible affine Weyl groups, i.e. we find a specific reduced expression for all minimal infinite reduced words. This is open problem 1 in \cite{Lam2}.

The paper is organized as follow. In section \ref{prelim} some background on Coxeter groups and affine Weyl groups is reviewed. In section
\ref{genCox} we prove the existence of minimal infinite reduced words for all finite rank Coxeter groups. We also
determine the number of minimal infinite reduced words for an irreducible affine Weyl group. In section \ref{SectMain} we state and prove our main theorem, i.e. the description
of a specific reduced expression for the minimal infinite reduced words of all irreducible affine Weyl groups from type $\widetilde{B}$ to $\widetilde{G}$. The type specific verification
in this proof is addressed in section \ref{translation}. In section \ref{infiniteCox} we investigate which of these minimal infinite reduced words are infinite Coxeter elements. By introducing a Weyl group action on the set of  Coxeter elements of the corresponding affine Weyl group, we give a satisfactory answer of this question for type $\widetilde{C_n}, \widetilde{F}_4$ and $\widetilde{G}_2$. We also answer a question in \cite{Lam2} regarding the full commutativity of the minimal infinite reduced words.

\section{Preliminaries}\label{prelim}
Let $(W,S)$ be a Coxeter system such that $S$ is finite. See \cite{bjornerbrenti} and \cite{Hum} for the basics of the Coxeter groups and their root systems. We review here the notions and properties of Coxeter groups that are beyond the scope of these books and are needed in this paper.

For $w\in W$, the inversion set of $w$ is defined to be $\{\alpha\in \Phi^+|w^{-1}(\alpha)\in \Phi^-\}$ and is denoted by $\Phi_w$.

Assume that $W$ is infinite. An infinite  sequence $s_1s_2s_3\cdots, s_i\in S$ is called an infinite reduced word of $W$ if $s_1s_2\cdots s_j$ is  reduced  for any $j\geq 1$. The inversion set of an infinite reduced word $s_1s_2\cdots$, denoted by $\Phi_{s_1s_2\cdots}$, is the union $\cup_{i=1}^{\infty}\Phi_{s_1s_2\cdots s_i}$. Two infinite reduced words are considered equal if their inversion sets are equal. The set of infinite reduced words of $(W,S)$ will be denoted by $W_l$. We denote $W\cup W_l$ by $\overline{W}$. Let $u\in W, v\in W_l$, one can define the multiplication $uv$ as in Definition 2.6 in \cite{wang}.  An element $w\in W$ is called straight if $l(w^n)=|nl(w)|.$ It is proved in \cite{speyer} that for an irreducible and infinite Coxeter group $W$ any Coxeter element is straight. Given a reduced expression $\underline{w}$ of a straight element $w$, the infinite reduced word $\underline{w}\underline{w}\underline{w}\cdots$  is independent of the choice of $\underline{w}$ and we shall denote this infinite reduced word by $w^{\infty}$.

A subset $\Gamma$ of the set of roots $\Phi$ is said to be closed if for any $\alpha,\beta\in \Gamma$ and $k_1\alpha+k_2\beta\in\Phi, k_1,k_2\in \mathbb{R}_{\geq 0}$ one has that  $k_1\alpha+k_2\beta\in\Gamma$. A set $B\subset \Gamma$ such that both $B$ and $\Gamma\backslash B$ are  closed  is called a  biclosed set in $\Gamma$. The inversion set of an element in the Coxeter group or an infinite reduced word is biclosed in $\Phi^+$, the set of positive roots. There exists a $W$-action on the set of all biclosed sets in $\Phi^+$ given by $w\cdot B:=(N(w)\backslash w(-B))\cup (w(B)\backslash
(-N(w)))$. For this action, see \cite{DyerWeakOrder}. In particular $u\cdot N(v)=N(uv)$ for $u\in W$ and $v\in \overline{W}$. Here the natural action of $W$ on a set $A\subset \Phi$ is denoted by $w(A)$ (i.e. $w(A)=\{w(\alpha)|\alpha\in A\}$.)

Let $W$ be an irreducible finite Weyl group with the crystallographic root system $\Phi$ contained in the Euclidean space $V$. Let $\Phi^+$ be the chosen standard positive system of $\Phi$ and let $\Pi$ be the simple system of $\Phi^+.$ For these notions, see Chapter 1 of \cite{Hum}. The root system of an (irreducible) affine Weyl group $\widetilde{W}$ (corresponding to $W$) can be constructed in the following way.

Let $\delta$ be an indeterminate.
Construct a $\mathbb{R}-$vector space $V'=V\oplus\mathbb{R}\delta$ and extend
the inner product on $V$ to $V'$ by requiring $(\delta,v)=0$ for any $v\in V'$.
If $\alpha\in \Phi^+$, define
$\widehat{\{\alpha\}}=\{\alpha+n\delta|n\in \mathbb{Z}_{\geq 0}\}\subset
V'$. If $\alpha\in \Phi^-$, define
$\widehat{\{\alpha\}}=\{\alpha+(n+1)\delta|n\in \mathbb{Z}_{\geq
0}\}\subset V'$.
For a set $\Lambda\subset \Phi$, define
$\widehat{\Lambda}=\bigcup_{\alpha\in\Lambda}\widehat{\{\alpha\}}\subset
V'$.

Then the set of roots of the affine Weyl group $\widetilde{W}$, denoted by $\widetilde{\Phi}$, is $\widehat{\Phi}\uplus-\widehat{\Phi}$.
The set of positive roots  (resp. the set of negative roots) is $\widehat{\Phi}$ (resp. $-\widehat{\Phi}$). The set of simple roots is $\{\alpha|\alpha\in \Delta\}\cup\{\delta-\rho\}$ where $\rho$ is the highest root in $\Phi^+$. Let $\alpha$ be a root in $\widetilde{\Phi}$. The reflection in $\alpha$, denoted by $s_{\alpha}$, is the map $V'\rightarrow V': v\mapsto v-2\frac{(v,\alpha)}{(\alpha,\alpha)}\alpha.$
Then the (irreducible) affine Weyl group $\widetilde{W}$ is generated by
$s_{\alpha},\alpha\in \widetilde{\Phi}$. It is known to be a Coxeter group with the simple reflections being the reflections in the simple roots of $\widehat{\Phi}$.  For $v\in V$,  define the $\mathbb{R}-$linear map $t_v: V'\rightarrow V'$   by $t_v(u)=u+(u,v)\delta.$ For $\alpha\in \Phi,$ define the coroot $\alpha^{\vee}=2\frac{\alpha}{(\alpha,\alpha)}$.  Let $T$  be the free Abelian group generated by $\{t_{\gamma^{\vee}}|\gamma\in \Delta\}$. Then one has that $\widetilde{W}=W\ltimes T.$ It is known that $t_{\alpha}$ is straight for a coroot $\alpha.$ We call an element $t_{\alpha}$ a ``translation'' where $\alpha$ is in the coroot lattice (the lattice spanned by the coroot).  Let $\pi$ be the canonical projection from $\widetilde{W}$ to $W$.

There is a classification of biclosed sets in the set of positive roots which are the inversion sets of infinite reduced words for affine Weyl groups in \cite{wang}, which is based on previous work in \cite{DyerReflOrder}. We recall it here. For $\Delta'\subset \Phi$, denote by $\Phi_{\Delta'}$ the root subsystem generated by $\Delta'$. It is shown in \cite{DyerReflOrder} and \cite{biclosedphi} that the biclosed sets in $\Phi$ are in the form $(\Psi^+\backslash \Phi_{\Delta_1})\cup \Phi_{\Delta_2}$ where $\Psi^+$ is a positive system of $\Phi$ and $\Delta_1,\Delta_2$ are two orthogonal subsets (i.e. $(\alpha,\beta)=0$ for any $\alpha\in \Delta_1,\beta\in \Delta_2$) of the simple system of $\Psi^+.$ We denote the set $(\Psi^+\backslash \Phi_{\Delta_1})\cup \Phi_{\Delta_2}$ by $\Psi^+_{\Delta_1,\Delta_2}$.

Any biclosed set in $\widetilde{\Phi}^+(=\widehat{\Phi})$ is of the form $w\cdot \widehat{\Psi^+_{\Delta_1,\Delta_2}}$ for some $\Psi^+,\Delta_1,\Delta_2$ and $w\in U$ where $U$ is the reflection subgroup generated by $\widehat{\Phi_{\Delta_1\cup \Delta_2}}$. The biclosed sets which are inversion sets of infinite reduced words are precisely those of the form $w\cdot \widehat{\Psi^+_{\Delta_1,\emptyset}}, w\in U,\Delta_1\subsetneq \Delta$ where $U$ is the reflection subgroup generated by $\widehat{\Phi_{\Delta_1}}$. In particular $w\cdot \widehat{\Psi^+_{\Delta_1,\emptyset}}$ equals $\widehat{\Psi^+_{\Delta_1,\emptyset}}\cup B$ where $B$ is a finite biclosed set in $\widehat{\Phi_{\Delta_1}}$ and the union is disjoint.

The extended weak order on $\overline{W}$ (denoted $(\overline{W},\leq)$) is a partial order defined as: $u\leq v$ if and only if $\Phi_u\subset \Phi_v.$
The extended weak order restricted on $W$ (denoted $(W,\leq)$) is called weak (right) order and the extended weak order restricted on $W_l$ (denoted $(W_l,\leq)$) is called limit weak order. If $u\leq v$ for $u\in W, v\in \overline{W}$, $u$ is called a (left) prefix of $v$.

\section{Existence of minimal infinite reduced words}\label{genCox}

In this section we address the problem of the existence of  minimal infinite reduced words and discuss the cardinality of the set of minimal infinite reduced words.

\begin{theorem}
Suppose that $S$ is finite. The limit weak order on $W_l$ has minimal elements.
\end{theorem}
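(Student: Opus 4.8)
The plan is to apply Zorn's lemma in its form for minimal elements. Since $W$ is infinite and $S$ is finite, $W_l$ is nonempty: the tree of finite reduced words is infinite and finitely branching, so by K\"onig's lemma it has an infinite branch. It therefore suffices to show that every nonempty chain $C$ in $(W_l,\le)$ admits a lower bound lying in $W_l$. Given such a chain, the natural candidate for its greatest lower bound is $B:=\bigcap_{w\in C}\Phi_w\subseteq\Phi^+$, and the whole argument reduces to proving that $B$ is the inversion set of an infinite reduced word.

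First I would check that $B$ is biclosed. Closedness is immediate, since an intersection of closed sets is closed. For the complement, note $\Phi^+\setminus B=\bigcup_{w\in C}(\Phi^+\setminus\Phi_w)$; given $\alpha,\beta$ in this union with $k_1\alpha+k_2\beta\in\Phi$ and $k_1,k_2\ge 0$, the two witnessing sets $\Phi^+\setminus\Phi_{w_1}$ and $\Phi^+\setminus\Phi_{w_2}$ are nested because $C$ is a chain, so both $\alpha,\beta$ lie in the larger one, which is closed; hence $k_1\alpha+k_2\beta$ lies in it, and in the union. Thus $B$ is biclosed.

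The next step, and the first genuinely nontrivial point, is that $B$ is infinite. If $B$ were finite, then by the standard fact that finite biclosed subsets of $\Phi^+$ are exactly the inversion sets of elements of $W$, we would have $B=\Phi_u$ for some $u\in W$, a proper prefix of every $w\in C$. Passing to the residual words $u^{-1}w\in W_l$ (using the multiplication and the identity $u\cdot N(v)=N(uv)$ recalled in Section \ref{prelim}) produces a chain $D\subseteq W_l$ with $\bigcap_{v\in D}\Phi_v=\emptyset$. This is impossible: for each simple reflection $s$ choose $v_s\in D$ with $\alpha_s\notin\Phi_{v_s}$; since $S$ is finite and $D$ is totally ordered, $v^\ast:=\min_s v_s$ exists in $D$, and then $\Phi_{v^\ast}\subseteq\Phi_{v_s}$ forces $\alpha_s\notin\Phi_{v^\ast}$ for every $s\in S$, contradicting the fact that the inversion set of an infinite reduced word contains the simple root of its first letter. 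Hence $B$ is infinite.

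The main obstacle is the final step: showing that the infinite biclosed set $B$ is genuinely realized as the inversion set of an infinite reduced word, rather than being merely an abstract biclosed set. I would handle this by identifying $B$ with a directed union of finite inversion sets, namely $B=\bigcup\{\Phi_u : u\in W,\ \Phi_u\subseteq B\}$, the union ranging over the common prefixes of all members of $C$. The content here is that these common prefixes form a directed subposet of $(W,\le)$: any two of them, being bounded above by every $w\in C$, have a join computed inside a weak-order interval $[e,w]$, which is a lattice. Once $B$ is exhibited as such a directed union, it is the inversion set of a well-defined element $\bar w\in\overline{W}$, the meet of $C$ in the extended weak order, and since $B$ is infinite we get $\bar w\in W_l$. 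This $\bar w$ is the desired lower bound, and Zorn's lemma then yields a minimal element of $(W_l,\le)$. The delicate part to pin down rigorously is precisely the directedness claim underlying the existence of the meet, i.e. the fact that the extended weak order on $\overline{W}$ is a complete meet-semilattice; this is where I expect to lean on the biclosed-set machinery recalled in Section \ref{prelim}.
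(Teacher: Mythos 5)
Your preliminary steps are sound: the nestedness argument for biclosedness of $B=\bigcap_{w\in C}\Phi_w$ is correct, and your proof that $B$ is infinite (reduce by the finite biclosed set $B=\Phi_u$, pass to the residual chain, and take the minimum of the finitely many witnesses $v_s$) is a correct and rather clean finitary argument. The gap is in your last step. You assert the identity $B=\bigcup\{\Phi_u : u\in W,\ \Phi_u\subseteq B\}$ and claim that ``the content here'' is directedness of the family of common prefixes. Directedness is indeed true, and provable without any lattice property of $[e,w]$ for infinite $w$: two common prefixes lie below a common \emph{finite} prefix of each $w'\in C$, so their join in $(W,\le)$ exists (bounded joins in weak order) and is again below every $w'\in C$. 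But that is the easy half. The substantive half is the inclusion $B\subseteq\bigcup\{\Phi_u:\Phi_u\subseteq B\}$, i.e.\ that every root in the intersection already lies in the inversion set of some element of $W$ that is a prefix of \emph{all} members of $C$. You never argue this, and it does not follow from $B$ being biclosed: as recalled in Section \ref{prelim}, for affine Weyl groups the biclosed sets $w\cdot\widehat{\Psi^+_{\Delta_1,\Delta_2}}$ with $\Delta_2\neq\emptyset$ (for instance $\widetilde{\Phi}^+$ itself) are \emph{not} inversion sets of infinite reduced words, so biclosedness alone can never deliver this step. Your identity is in fact equivalent to the assertion that the meet of $C$ in $\overline{W}$ has inversion set exactly $B$ --- a statement the paper never makes and which does not follow from the complete meet-semilattice property of $(\overline{W},\le)$, since that property only gives \emph{existence} of the meet. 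Consequently your final inference collapses: what your construction actually produces is a lower bound $\bar w$ with $\Phi_{\bar w}=\bigcup\{\Phi_u:\Phi_u\subseteq B\}\subseteq B$, and the infiniteness of $B$ does not transfer to infiniteness of $\Phi_{\bar w}$; if $\Phi_{\bar w}$ were finite, $\bar w$ would lie in $W$ and would not be the required lower bound in $W_l$.

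The repair uses exactly the tool you already deployed, aimed at the right target: show the set of common prefixes has no maximal element. Given a common prefix $u\in W$, consider the chain $u^{-1}C\subseteq W_l$. If for every $s\in S$ there were $w_s\in C$ with $\alpha_s\notin\Phi_{u^{-1}w_s}$, then the minimum $w^*$ of the finite subset $\{w_s:s\in S\}$ of the chain $C$ would give an infinite reduced word $u^{-1}w^*$ whose inversion set contains no simple root, which is absurd. Hence some $\alpha_s$ lies in $\Phi_{u^{-1}w}$ for all $w\in C$, so $us$ is a strictly longer common prefix. Therefore common prefixes have unbounded length, the directed union $\Phi_{\bar w}$ is infinite, and $\bar w\in W_l$ is the desired lower bound; Zorn's lemma finishes the proof. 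Note that this repaired route never needs the identity $\Phi_{\bar w}=B$, nor biclosedness of $B$, nor Theorem 2.9 of \cite{wang}. It is also genuinely different from the paper's proof, which invokes that theorem to obtain the meet in $\overline{W}$ and then rules out a finite meet by an iterated subchain argument; your finite-minimum trick is a tidier substitute for that iteration, but as written you apply it only to conclude that $B$ is infinite, which is not the statement the argument needs.
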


\begin{proof}
We first show that under the limit weak order any decreasing chain of $W_l$ has a greatest lower bound in $W_l$.

Since $(\overline{W},\leq)$  is a complete meet semi-lattice by Theorem 2.9 in \cite{wang}, such chain must have a greatest lower bound in $\overline{W}$.
We show that such a greatest lower bound cannot be $e$. Assume to the contrary the greatest lower bound is the identity element $e$. Suppose that $\{\alpha_1,\alpha_2,\cdots,\alpha_r\}$ is the set of simple roots. Then $\alpha_1$ cannot be in the inversion set of all elements in this chain (otherwise the greatest lower bound would be greater than or equal to $s_{\alpha_1}$). So we can find an element $u$ in this chain whose inversion set does not contain $\alpha_1.$ Then we consider the elements in the chain below $u$. They form another decreasing sub-chain (whose meet is also $e$ and $s_{\alpha_1}$ is not a prefix of any element in this sub-chain). Proceed in this manner we will get a sub-chain such that any simple reflection is not the prefix of any element of it.  But this is a sub-chain of infinite reduced words. This is a contradiction.

Now we show that such a greatest lower bound cannot be some $w\in W$ either. If so, we left multiply all elements in the chain by $w^{-1}$ and the new chain will have the greatest lower bound $e.$

This implies that the greatest lower bound of a chain of infinite reduced words must  lie in $W_l$. By applying Zorn's Lemma to the opposite poset of the limit weak order on $W_l$, we see the conclusion.
\end{proof}

The minimal elements in $(W_l,\leq)$ will be called minimal infinite reduced words.

\begin{remark*}
If $S$ is infinite, it could happen that $W_l$ has no minimal element.
Consider the weak direct product of $W_i,i=1,2,\cdots$ where each $W_i$ is of type $A_1$ (i.e. $\mathbb{Z}_2$).
Then any infinite reduced word is of the form $s_1s_2\cdots, s_i\in S, s_i\neq s_j$ if $i\neq j$.
One can easily see that any sub-infinite reduced word of it is less than it under the limit weak order.
\end{remark*}

\begin{theorem}\label{thmaffineminimal}
(1) For an irreducible affine Weyl group $\widetilde{W}$, the minimal infinite reduced words are those infinite reduced words whose inversion set is of the form $\widehat{\Psi^+_{\Delta_1,\emptyset}}$ with $|\Delta_1|=|\Delta|-1$ where $\Delta$ is the simple system of $\Psi^+.$

(2) The number of minimal infinite reduced words for $\widetilde{W}$ is $|W|(\sum_{H} \frac{1}{|H|})$ where the sum runs over the set of maximal proper standard parabolic subgroups.
\end{theorem}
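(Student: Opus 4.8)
The plan is to combine the classification of inversion sets of infinite reduced words recalled in Section~\ref{prelim} with the fact that the extended weak order is inclusion of inversion sets. By that classification every inversion set of an infinite reduced word has the form $\widehat{\Psi^+_{\Delta_1,\emptyset}}\cup B$ with $\Delta_1\subsetneq\Delta$ and $B$ a finite biclosed subset of $\widehat{\Phi_{\Delta_1}}$; call such a set \emph{extremal} if moreover $B=\emptyset$ and $|\Delta_1|=|\Delta|-1$, so that the extremal sets are exactly the candidates in the statement of (1). Part (1) then splits into two assertions: (i) every inversion set of an infinite reduced word contains an extremal one, which forces every minimal element to be extremal; and (ii) no extremal set is properly contained in another, which makes every extremal set minimal and identifies the minimal elements with the extremal sets.

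For (i), given $\widehat{\Psi^+_{\Delta_1,\emptyset}}\cup B$ I would enlarge $\Delta_1$ to a maximal proper subset $\Delta_1'\supseteq\Delta_1$ with $|\Delta_1'|=|\Delta|-1$. Since $\Phi_{\Delta_1}\subseteq\Phi_{\Delta_1'}$ gives $\Psi^+\setminus\Phi_{\Delta_1'}\subseteq\Psi^+\setminus\Phi_{\Delta_1}$, monotonicity of the hat operation yields $\widehat{\Psi^+_{\Delta_1',\emptyset}}\subseteq\widehat{\Psi^+_{\Delta_1,\emptyset}}\subseteq\widehat{\Psi^+_{\Delta_1,\emptyset}}\cup B$, and $\widehat{\Psi^+_{\Delta_1',\emptyset}}$ is an extremal inversion set (the case $w=e$ of the classification). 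For (ii) I would first reduce from the affine to the finite root system. For $\Lambda\subseteq\Phi$ the linear parts of the affine roots in $\widehat{\Lambda}$ are exactly the elements of $\Lambda$, so for extremal sets $\widehat{\Lambda}\subseteq\widehat{\Lambda'}$ holds iff $\Lambda\subseteq\Lambda'$ (one direction is monotonicity; for the other, if $\gamma\in\Lambda$ then $\gamma+n\delta\in\widehat{\Lambda'}$ for large $n$, forcing $\gamma\in\Lambda'$). Writing $\Delta_1=\Delta\setminus\{\beta\}$ and letting $\zeta$ span the line $(\linspan\Phi_{\Delta_1})^{\perp}$ with $(\beta,\zeta)>0$, one checks $\Psi^+\setminus\Phi_{\Delta_1}=\{\gamma\in\Phi:(\gamma,\zeta)>0\}$ and $\Phi_{\Delta_1}=\{\gamma\in\Phi:(\gamma,\zeta)=0\}$, because in the simple basis of $\Psi^+$ a root has either all coefficients $\geq 0$ or all $\leq 0$.

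Now suppose an extremal set for $\zeta$ is contained in one for $\zeta'$, i.e. $\{\gamma:(\gamma,\zeta)>0\}\subseteq\{\gamma:(\gamma,\zeta')>0\}$. Taking a root $\gamma$ with $(\gamma,\zeta')=0$, neither $\gamma$ nor $-\gamma$ lies in the larger set, hence neither lies in the smaller one, so $(\gamma,\zeta)=0$; thus $\Phi_{\Delta_1'}\subseteq\Phi_{\Delta_1}$. Since both $\Phi_{\Delta_1}$ and $\Phi_{\Delta_1'}$ have rank $|\Delta|-1$ and span the hyperplanes $\zeta^{\perp}$ and $\zeta'^{\perp}$ respectively, these hyperplanes coincide, so $\zeta$ and $\zeta'$ span the same line, and nonemptiness of the inclusion forces the same ray and hence $\Lambda=\Lambda'$. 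The crux—and the step that genuinely needs $|\Delta_1|=|\Delta|-1$—is precisely this rank/span argument: a maximal proper parabolic is exactly what spans a hyperplane, pinning $\zeta$ to a single ray; for smaller $\Delta_1$ the orthogonal complement is higher-dimensional and strictly smaller inversion sets appear, explaining why non-maximal $\Delta_1$ fail to give minimal elements.

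For part (2), by (1) the minimal infinite reduced words are in bijection with the extremal sets $\{\gamma:(\gamma,\zeta)>0\}$, and by the determination of $\zeta$ above these correspond bijectively to the rays $\mathbb{R}_{\geq 0}\zeta=(\linspan\Phi_{\Delta_1})^{\perp}$ arising from a maximal proper parabolic of some positive system, i.e. to the one-dimensional faces of the Weyl chambers of $\Phi$; these form the $W$-orbit of the rays spanned by the fundamental coweights $\varpi_1^\vee,\dots,\varpi_r^\vee$ of the standard chamber. Their number is $\sum_{i=1}^r[W:\mathrm{Stab}_W(\mathbb{R}_{\geq 0}\varpi_i^\vee)]$, and since $W$ acts by isometries the stabilizer of the ray equals the stabilizer of the point $\varpi_i^\vee$, which is the maximal standard parabolic $W_{S\setminus\{s_i\}}$. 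Distinct indices give distinct orbits because the closed dominant chamber is a fundamental domain, so there is no over-counting, and summing yields $\sum_i|W|/|W_{S\setminus\{s_i\}}|=|W|\sum_{H}1/|H|$ over the maximal proper standard parabolic subgroups $H$, as claimed. I expect the only delicate points to be the faithful reflection of inclusions by the hat operation on extremal sets and the identification of the ray stabilizer with a maximal parabolic; the former is elementary, and the latter is standard from the theory of the fundamental domain.
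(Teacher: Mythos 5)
Your proposal is correct, and it follows the same overall skeleton as the paper's proof—use the classification of inversion sets of infinite reduced words recalled in Section \ref{prelim} to reduce the problem to the sets $\widehat{\Psi^+_{\Delta_1,\emptyset}}$, then count them via a $W$-action—but it replaces the paper's two key citations with self-contained arguments. Where you prove your step (ii) by reducing affine containments to finite ones via linear parts and then running the linear-functional argument with $\zeta$ spanning $(\linspan\Phi_{\Delta_1})^{\perp}$, the paper instead quotes from \cite{DyerReflOrder} and \cite{biclosedphi} that the sets $\Psi^+_{\Delta_1,\emptyset}$ with $|\Delta_1|=|\Delta|-1$ are atoms in the poset of \emph{all} biclosed subsets of $\Phi$; note that your argument establishes only pairwise incomparability of the extremal sets, which is weaker than the atom property, but combined with your step (i) it is all the theorem requires. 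For the count, the paper invokes Theorem 1.15 of \cite{DyerReflOrder} (each such biclosed set is $W$-conjugate to a unique standard $\Phi^+_{M,\emptyset}$, with stabilizer exactly $W_M$), whereas you re-derive this geometrically by identifying the extremal sets with the extreme rays of the Weyl chambers, i.e.\ the $W$-orbits of the fundamental coweight rays, and then using the standard fundamental-domain and point-stabilizer facts to get $\sum_i |W|/|W_{S\setminus\{s_i\}}|$. Your route buys independence from the cited structural results (only the classification already assumed in Section \ref{prelim} is used) at the cost of a few standard verifications you correctly flag: that $\Phi_{\Delta_1}=\{\gamma\in\Phi:(\gamma,\zeta)=0\}$ rests on the fact that the subsystem generated by a subset of a simple system consists of the roots in its span, and that ruling out $\zeta'=-c\zeta$ uses nonemptiness of $\Psi^+\setminus\Phi_{\Delta_1}$; the paper's route is shorter and yields stronger intermediate statements.
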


\begin{proof}
(1) Let $u$ be a minimal infinite reduced word. By section \ref{prelim} we can assume that $\Phi_u=w\cdot \widehat{\Psi^+_{\Delta_1,\emptyset}}$ where $w$ is an element in the reflection subgroup generated by $\widehat{\Phi_{\Delta_1}}$. Then $w$ has to be the identity since otherwise $w\cdot \widehat{\Psi^+_{\Delta_1,\emptyset}}$ equals $\widehat{\Psi^+_{\Delta_1,\emptyset}}\cup B$ where $B$ is a finite biclosed set in $\widehat{\Phi_{\Delta_1}}$ (See Theorem 1.3(2) in \cite{wang}). Hence $\Phi_u=\widehat{\Psi^+_{\Delta_1,\emptyset}}$. Since $\Delta_1\subsetneq \Delta$, the minimality forces that $|\Delta_1|=|\Delta|-1.$

Conversely let $D=\widehat{\Psi^+_{\Delta_1,\emptyset}}$ with $|\Delta_1|=|\Delta|-1$ where $\Delta$ is the simple system of $\Psi^+.$ Then by section \ref{prelim}, $D$ is the inversion set of an infinite reduced word $u$.  Let $u'$ be another infinite reduced word such that $u'\leq u$, by section \ref{prelim}, $\Phi_{u'}=\widehat{\Xi^+_{\Pi_1,\emptyset}}\cup B'$ where $B'$ is a finite biclosed set in $\widehat{\Phi_{\Pi_1}}$. Since $\Phi_{u'}\subset \Phi_u$,
this forces $\Xi^+_{\Pi_1,\emptyset}\subset \Psi^+_{\Delta,\emptyset}$. Consider the poset of biclosed sets of $\Phi$ (under containment). Biclosed sets of the form  $\Psi^+_{\Delta_1,\emptyset}$ with $|\Delta_1|=|\Delta|-1$ where $\Delta$ is the simple system of $\Psi^+$ are precisely the atoms in this poset (see \cite{DyerReflOrder} or \cite{biclosedphi}), i.e. only empty (biclosed) set ($=\Psi^+_{\Delta,\emptyset}$) is contained in them. So $\Xi^+_{\Pi_1,\emptyset}=\Psi^+_{\Delta_1,\emptyset}$ or $\Xi^+_{\Pi_1,\emptyset}=\emptyset$. The latter is impossible since $\Phi_{u'}$ is infinite. Therefore $\Xi^+_{\Pi_1,\emptyset}=\Psi^+_{\Delta_1,\emptyset}$ and $B'=\emptyset$. So $u'=u.$

(2) By (1) the minimal infinite reduced words are in bijection with the biclosed sets (in $\Phi$) of the form $\Psi^+_{\Delta_1,\emptyset}$ with $|\Delta_1|=|\Delta|-1$. Therefore we count the number of the biclosed sets of that form. To do this, one considers the natural action of the finite Weyl group $W$ on these biclosed sets. We take $\Psi^+=\Phi^+$, the standard positive system. By Theorem 1.15 of \cite{DyerReflOrder} every biclosed set of the form $\Psi^+_{\Delta_1,\emptyset}$ with $|\Delta_1|=|\Delta|-1$ is conjugate to some unique $\Phi^+_{M,\emptyset}$ with $|M|=\mathrm{rank}(W)-1.$ Now the assertion follows from the fact that the stabilizer of $\Phi^+_{M,\emptyset}$ is precisely $W_M,$ the standard parabolic subgroup generated by $s_{\alpha},\alpha\in M.$
\end{proof}

A Coxeter system $(W,S)$ is said to be word-hyperbolic if and only if there exists no $R\subset S$ which satisfies one of the following conditions: (1) $W_R$ is affine and $(W_R,R)$ has rank at least 3 (as a Coxeter system). (2) $(W_R, R)=(W_{R_1}\times W_{R_2}, R_1\cup R_2)$ with both $W_{R_1}$ and $W_{R_2}$ both infinite.

 \begin{proposition}\label{infinitewordhyperbolic}
 Let $(W,S)$ be an  infinite non-affine word-hyperbolic Coxeter system. Then $(W_l,\leq)$ admits infinitely many minimal elements.
 \end{proposition}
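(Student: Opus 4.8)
The plan is to realize every infinite reduced word geometrically as a geodesic ray and to read off both minimality and injectivity from the hyperbolic structure. By Moussong's criterion, the two conditions defining word-hyperbolicity are exactly what guarantees that the Davis complex --- equivalently, the Cayley graph $\Gamma$ of $(W,S)$ with its word metric --- is a proper geodesic Gromov-hyperbolic space on which $W$ acts cocompactly. Since $W$ is infinite and non-affine, it is non-elementary: a two-ended Coxeter group is virtually cyclic, hence virtually abelian, hence an affine (Euclidean) reflection group, which is excluded. Consequently the boundary $\partial\Gamma$ is infinite (in fact uncountable). I identify each infinite reduced word $v=s_{j_1}s_{j_2}\cdots$ with the geodesic ray $e, s_{j_1}, s_{j_1}s_{j_2},\dots$ in $\Gamma$; because $S$ is finite and $\Gamma$ is proper, this ray converges to a boundary point $\xi_v\in\partial\Gamma$. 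For a positive root $\alpha$ let $M_\alpha$ denote the wall fixed by $s_\alpha$, which coarsely separates $\Gamma$ into two half-spaces. The standard dictionary between the weak order and the word metric gives that $\Phi_v$ consists of exactly those positive roots $\alpha$ whose wall $M_\alpha$ separates $e$ from the ray, i.e. separates $e$ from $\xi_v$.

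The core of the argument is the following lemma: if $v,w\in W_l$ and $\Phi_w\subseteq\Phi_v$, then $v=w$; in particular every infinite reduced word is minimal, and distinct boundary points yield distinct infinite reduced words. To prove it, suppose first $\xi_v\neq\xi_w$. Using hyperbolicity one establishes the shadow-shrinking property: the walls crossed by the ray to $\xi_w$, ordered along the ray, have far half-spaces whose boundary traces shrink to $\{\xi_w\}$; hence some such wall $M_\alpha$ with $\alpha\in\Phi_w$ has $\xi_v$ on the same side as $e$, so $\alpha\notin\Phi_v$, contradicting $\Phi_w\subseteq\Phi_v$. Therefore $\xi_v=\xi_w$. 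Finally, two geodesic rays from $e$ with the same endpoint cross the same walls --- a wall crossed by one separates $e$ from the common endpoint and so must be crossed by the other --- whence $\Phi_v=\Phi_w$. Granting the lemma, the map $v\mapsto\xi_v$ from $W_l$ to $\partial\Gamma$ is injective, and its image is all of $\partial\Gamma$ (every boundary point is joined to $e$ by a geodesic ray, which is an infinite reduced word), so $W_l$ is infinite and, being an antichain, consists entirely of minimal elements. This shows $(W_l,\leq)$ has infinitely many minimal elements.

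The step I expect to be the main obstacle is the shadow-shrinking property together with the combinatorial-to-geometric dictionary underlying it. Concretely one must verify that each wall $M_\alpha$ is quasi-convex and coarsely two-sided in $\Gamma$, that a geodesic ray crosses each wall at most once and crosses $M_\alpha$ precisely when $\alpha\in\Phi_v$, and that for a ray to $\xi_w$ the cut-off boundary regions of the crossed walls converge to $\xi_w$. This last point is exactly where Gromov-hyperbolicity (thin triangles) is used, and it is precisely what fails in the affine case, where flats allow geodesic rays with non-minimal, strictly nested inversion sets --- consistent with the finite count in Theorem \ref{thmaffineminimal}. A secondary point requiring care is the identification of the combinatorial inversion set $\Phi_v$ with the geometric set of separating walls; since the $W$-action on $\Gamma$ is cocompact, every boundary point is a conical limit point and no parabolic pathologies arise, so this identification is clean once the wall geometry is in place.
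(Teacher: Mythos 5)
Your argument breaks at the key lemma, which is false: it is not true for word-hyperbolic Coxeter systems that $\Phi_w\subseteq\Phi_v$ forces $v=w$, i.e.\ $(W_l,\leq)$ is in general \emph{not} an antichain and $v\mapsto\xi_v$ is \emph{not} injective. The step that fails is the assertion that a wall crossed by a geodesic ray must separate $e$ from the ray's endpoint and hence must be crossed by every other ray with the same endpoint: a ray can cross a wall ``sideways,'' since both half-spaces of a wall accumulate on every boundary point of the wall itself. A concrete counterexample satisfying all hypotheses of the proposition is $W=W_1\times W_2$, where $W_1=\langle s\rangle$ is of type $A_1$ and $W_2$ is the rank-$3$ Coxeter group with all $m_{ij}=\infty$ (this $W$ is word-hyperbolic: condition (2) of the definition requires \emph{both} factors infinite, and no standard parabolic is affine of rank at least $3$; it is infinite and non-affine). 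For any $v\in (W_2)_l$, the word $sv$ is again an infinite reduced word, and since $\alpha_s$ is orthogonal to every root of $W_2$ one gets $\Phi_{sv}=\{\alpha_s\}\cup\Phi_v\supsetneq\Phi_v$. Thus $v<sv$ are distinct comparable infinite reduced words, $sv$ is not minimal, and the two rays, which remain at distance $1$ from each other, converge to the same Gromov boundary point. The phenomenon is not an artifact of reducibility: in a cocompact hyperbolic triangle group, the block of a boundary point that is an ideal endpoint of a wall $M$ is isomorphic to the weak order of $A_1$ (the two words differ exactly by the root of $M$), again defeating injectivity and the antichain claim. So the proposition is true, but not for the reason you give.

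What your hyperbolic geometry can legitimately deliver is the other implication: if $\xi_v\neq\xi_w$ then $v$ and $w$ are incomparable. That is essentially half of the theorem of Lam and Thomas that the paper invokes: comparable infinite reduced words have inversion sets with finite symmetric difference, i.e.\ lie in the same block. The paper's proof then uses the second half of Lam--Thomas, which is exactly what your argument is missing and cannot be dispensed with: each block, in the induced order, is isomorphic to the weak order of a finite Coxeter group, hence contains a unique minimal element, and such an element is minimal in all of $W_l$ because comparability only occurs inside blocks. Finally, the paper gets infinitude of the set of blocks from Theorem 3.16 of \cite{wang} (infinitely many maximal elements of $\overline{W}$, one per block); your observation that the boundary of a non-elementary hyperbolic group is infinite could serve the same purpose, but only once the block structure is in place. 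To repair your proof you would need to show, not that the fibers of $v\mapsto\xi_v$ are singletons (false), but that each fiber contains an element minimal within it; that is precisely what the finite-Coxeter-group structure of blocks in \cite{Lam} provides.
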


 \begin{proof}
 By \cite{Lam} Theorem 3, two infinite reduced words are comparable if and only if they are in the same block of $(W_l,\leq)$. (Two infinite reduced words are in the same block if and only if the symmetric difference of their inversion sets is finite.) Furthermore the limit weak order limited to a block is isomorphic to the weak order of a finite Coxeter group. Therefore each block has a unique maximal element (which is also a maximal element of $W_l$) and a unique minimal element (which is also a minimal element of $W_l$). By \cite{wang} Theorem 3.16, $\overline{W}$ has infinitely many maximal elements. Therefore $(W_l,\leq)$ has infinitely many blocks. Therefore it has infinitely many minimal elements.
 \end{proof}

\section{the theorem}\label{SectMain}

Let $(U,S)$ be a finite Coxeter system. We denote by $w_0$ the longest element of $U$. Let $V$ be a standard parabolic subgroup of $U$. We denote by $w_0^{V}$ the longest element of $V$. For $J\subset S$, we denote by $W_J$ the parabolic subgroup generated by $J$. Let $\Pi$ be the set of simple roots of the root system of $U$. For $\Gamma\subset \Pi$ define
$W_{\Gamma}:=W_{J_{\Gamma}}$ where $J_{\Gamma}=\{s_{\alpha}|\,\alpha\in \Gamma\}$.

Let $W$ be a finite irreducible Weyl group of type $X_n, X=B,C,D,E,F,G$. We denote the simple roots by $\beta_1, \beta_2, \cdots, \beta_n.$ The numbering of these simple roots comes from the following Dynkin diagram:

Type $B_n$

\tikzset{every picture/.style={line width=0.75pt}} 



\begin{theorem}\label{main}
(1) For $W$ of type $B_n$, an infinite reduced word of $\widetilde{W}$ is minimal if and only if it has a reduced expression of the forms
$$(\underline{v}s_{\delta-\rho}((s_{\beta_2}s_{\beta_1}s_{\beta_3}s_{\beta_2})(s_{\beta_4}s_{\beta_3}s_{\beta_5}s_{\beta_4})\cdots (s_{\beta_{i-2}}s_{\beta_{i-3}}s_{\beta_{i-1}}s_{\beta_{i-2}})s_{\delta-\rho})$$
$$\cdots((s_{\beta_2}s_{\beta_1}s_{\beta_3}s_{\beta_2})(s_{\beta_4}s_{\beta_3}s_{\beta_5}s_{\beta_4})s_{\delta-\rho})((s_{\beta_2}s_{\beta_1}s_{\beta_3}s_{\beta_2})s_{\delta-\rho})\underline{u})^{\infty}$$
where $\underline{u}\underline{v}$ is a reduced expression of $w_0^{W_{S\backslash \{s_{\beta_i}\}}}w_0, 2\leq i\leq n$ and $i$ is even,
$$(\underline{v}(s_{\delta-\rho}(s_{\beta_2}s_{\beta_3}\cdots s_{\beta_n}s_{\beta_{n-1}}s_{\beta_{n-2}}\cdots s_{\beta_i}s_{\beta_1}s_{\beta_2}\cdots s_{\beta_n}s_{\beta_{n-1}}s_{\beta_{n-2}}\cdots s_{\beta_i}))^{\frac{i-1}{2}}$$
$$(s_{\delta-\rho}s_{\beta_2}s_{\beta_3}\cdots s_{\beta_n}s_{\beta_{n-1}}s_{\beta_{n-2}}\cdots s_{\beta_i}s_{\beta_1}s_{\beta_2}\cdots s_{\beta_{i-1}})$$
$$(s_{\delta-\rho}(s_{\beta_2}s_{\beta_1}s_{\beta_3}s_{\beta_2})(s_{\beta_4}s_{\beta_3}s_{\beta_5}s_{\beta_4})\cdots (s_{\beta_{i-3}}s_{\beta_{i-4}}s_{\beta_{i-2}}s_{\beta_{i-3}}))\cdots$$
$$(s_{\delta-\rho}(s_{\beta_2}s_{\beta_1}s_{\beta_3}s_{\beta_2})(s_{\beta_4}s_{\beta_3}s_{\beta_5}s_{\beta_4}))(s_{\delta-\rho}(s_{\beta_2}s_{\beta_1}s_{\beta_3}s_{\beta_2}))s_{\delta-\rho}\underline{u})^{\infty}$$
where $\underline{u}\underline{v}$ is a reduced expression of $w_0^{W_{S\backslash \{s_{\beta_i}\}}}w_0, 3\leq i\leq n$ and $i$ is odd,
$$(\underline{v}s_{\delta-\rho}s_{\beta_2}s_{\beta_3}\cdots s_{\beta_n}\cdots s_{\beta_3}s_{\beta_2}s_{\delta-\rho}\underline{u})^{\infty}$$
where $\underline{u}\underline{v}$ is a reduced expression of $w_0^{W_{S\backslash \{s_{\beta_1}\}}}w_0$.

(2) For $W$ of type $C_n$, an infinite reduced word of $\widetilde{W}$ is minimal if and only if it has a reduced expression of the form
$$(\underline{v}s_{\delta-\rho}(s_{\beta_1}s_{\delta-\rho})(s_{\beta_2}s_{\beta_1}s_{\delta-\rho})\cdots(s_{\beta_{i-1}}s_{\beta_{i-2}}\cdots s_{\beta_1}s_{\delta-\rho})\underline{u})^{\infty}$$
where $\underline{u}\underline{v}$ is a reduced expression of $w_0^{W_{S\backslash \{s_{\beta_i}\}}}w_0$ for $1\leq  i\leq n$.

(3) For $W$ of type $D_n$, an infinite reduced word of $\widetilde{W}$ is minimal if and only if it has a reduced expression of the forms
$$(\underline{v}s_{\delta-\rho}s_{\beta_2}s_{\beta_3}\cdots s_{\beta_{n-2}}s_{\beta_{n-1}}s_{\beta_{n}}s_{\beta_{n-2}}\cdots s_{\beta_3}s_{\beta_2}s_{\delta-\rho}\underline{u})^{\infty}$$
where $\underline{u}\underline{v}$ is a reduced expression of $w_0^{W_{S\backslash \{s_{\beta_1}\}}}w_0$.
$$(\underline{v}s_{\delta-\rho}((s_{\beta_2}s_{\beta_1}s_{\beta_3}s_{\beta_2})(s_{\beta_4}s_{\beta_3}s_{\beta_5}s_{\beta_4})\cdots (s_{\beta_{i-2}}s_{\beta_{i-3}}s_{\beta_{i-1}}s_{\beta_{i-2}})s_{\delta-\rho})$$
$$\cdots((s_{\beta_2}s_{\beta_1}s_{\beta_3}s_{\beta_2})(s_{\beta_4}s_{\beta_3}s_{\beta_5}s_{\beta_4})s_{\delta-\rho})((s_{\beta_2}s_{\beta_1}s_{\beta_3}s_{\beta_2})s_{\delta-\rho})\underline{u})^{\infty}$$
where $\underline{u}\underline{v}$ is a reduced expression of $w_0^{W_{S\backslash \{s_{\beta_i}\}}}w_0$ where $i$ is even and  $i\leq n-2$
$$(\underline{v}s_{\delta-\rho}(s_{\beta_2}s_{\beta_3}\cdots s_{\beta_n}s_{\beta_1}s_{\beta_2}\cdots s_{\beta_n}(s_{\beta_{n-3}}s_{\beta_{n-2}})(s_{\beta_{n-4}}s_{\beta_{n-3}})\cdots (s_{\beta_{i-1}}s_{\beta_i})s_{\delta-\rho})^{\frac{i-1}{2}}$$
$$(s_{\beta_2}s_{\beta_3}\cdots s_{\beta_n}s_{\beta_{n-2}}s_{\beta_{n-3}}\cdots s_{\beta_i}s_{\beta_1}s_{\beta_2}\cdots s_{\beta_{i-1}}s_{\delta-\rho})$$
$$((s_{\beta_2}s_{\beta_1}s_{\beta_3}s_{\beta_2})(s_{\beta_4}s_{\beta_3}s_{\beta_5}s_{\beta_4})\cdots (s_{\beta_{i-3}}s_{\beta_{i-4}}s_{\beta_{i-2}}s_{\beta_{i-3}})s_{\delta-\rho})$$
$$\cdots((s_{\beta_2}s_{\beta_1}s_{\beta_3}s_{\beta_2})(s_{\beta_4}s_{\beta_3}s_{\beta_5}s_{\beta_4})s_{\delta-\rho})((s_{\beta_2}s_{\beta_1}s_{\beta_3}s_{\beta_2})s_{\delta-\rho})\underline{u})^{\infty}$$
where $\underline{u}\underline{v}$ is a reduced expression of $w_0^{W_{S\backslash \{s_{\beta_i}\}}}w_0$ where $i$ is odd and  $3\leq i\leq n-2$
$$(\underline{v}s_{\delta-\rho}(s_{\beta_2}s_{\beta_3}\cdots s_{\beta_{n-1}}s_{\beta_1}s_{\beta_2}\cdots s_{\beta_{n-2}}s_{\beta_n}s_{\delta-\rho})^{\frac{n+1}{2}}$$
$$((s_{\beta_2}s_{\beta_1}s_{\beta_3}s_{\beta_2})(s_{\beta_4}s_{\beta_3}s_{\beta_5}s_{\beta_4})\cdots (s_{\beta_{n-3}}s_{\beta_{n-4}}s_{\beta_{n-2}}s_{\beta_{n-3}})s_{\delta-\rho})$$
$$\cdots((s_{\beta_2}s_{\beta_1}s_{\beta_3}s_{\beta_2})(s_{\beta_4}s_{\beta_3}s_{\beta_5}s_{\beta_4})s_{\delta-\rho})((s_{\beta_2}s_{\beta_1}s_{\beta_3}s_{\beta_2})s_{\delta-\rho})\underline{u})^{\infty}$$
where $\underline{u}\underline{v}$ is a reduced expression of $w_0^{W_{S\backslash \{s_{\beta_{n-1}}\}}}w_0$ where $n$ is odd,
$$(\underline{v}s_{\delta-\rho}(s_{\beta_2}s_{\beta_3}\cdots s_{\beta_{n-2}}s_{\beta_{n}}s_{\beta_1}s_{\beta_2}\cdots s_{\beta_{n-2}}s_{\beta_{n-1}}s_{\delta-\rho})^{\frac{n+1}{2}}$$
$$((s_{\beta_2}s_{\beta_1}s_{\beta_3}s_{\beta_2})(s_{\beta_4}s_{\beta_3}s_{\beta_5}s_{\beta_4})\cdots (s_{\beta_{n-3}}s_{\beta_{n-4}}s_{\beta_{n-2}}s_{\beta_{n-3}})s_{\delta-\rho})$$
$$\cdots((s_{\beta_2}s_{\beta_1}s_{\beta_3}s_{\beta_2})(s_{\beta_4}s_{\beta_3}s_{\beta_5}s_{\beta_4})s_{\delta-\rho})((s_{\beta_2}s_{\beta_1}s_{\beta_3}s_{\beta_2})s_{\delta-\rho})\underline{u})^{\infty}$$
where $\underline{u}\underline{v}$ is a reduced expression of $w_0^{W_{S\backslash \{s_{\beta_{n}}\}}}w_0$ where $n$ is odd,
$$(\underline{v}s_{\delta-\rho}s_{\beta_2}\cdots s_{\beta_{n-2}}s_{\beta_n}s_{\beta_1}s_{\beta_2}\cdots s_{\beta_{n-2}}s_{\delta-\rho}$$
$$((s_{\beta_2}s_{\beta_1}s_{\beta_3}s_{\beta_2})(s_{\beta_4}s_{\beta_3}s_{\beta_5}s_{\beta_4})\cdots (s_{\beta_{n-4}}s_{\beta_{n-5}}s_{\beta_{n-3}}s_{\beta_{n-4}})s_{\delta-\rho})$$
$$\cdots((s_{\beta_2}s_{\beta_1}s_{\beta_3}s_{\beta_2})(s_{\beta_4}s_{\beta_3}s_{\beta_5}s_{\beta_4})s_{\delta-\rho})((s_{\beta_2}s_{\beta_1}s_{\beta_3}s_{\beta_2})s_{\delta-\rho})\underline{u})^{\infty}$$
where $\underline{u}\underline{v}$ is a reduced expression of $w_0^{W_{S\backslash \{s_{\beta_{n-1}}\}}}w_0$ where $n$ is even,
$$(\underline{v}s_{\delta-\rho}s_{\beta_2}\cdots s_{\beta_{n-2}}s_{\beta_{n-1}}s_{\beta_1}s_{\beta_2}\cdots s_{\beta_{n-2}}s_{\delta-\rho}$$
$$((s_{\beta_2}s_{\beta_1}s_{\beta_3}s_{\beta_2})(s_{\beta_4}s_{\beta_3}s_{\beta_5}s_{\beta_4})\cdots (s_{\beta_{n-4}}s_{\beta_{n-5}}s_{\beta_{n-3}}s_{\beta_{n-4}})s_{\delta-\rho})$$
$$\cdots((s_{\beta_2}s_{\beta_1}s_{\beta_3}s_{\beta_2})(s_{\beta_4}s_{\beta_3}s_{\beta_5}s_{\beta_4})s_{\delta-\rho})((s_{\beta_2}s_{\beta_1}s_{\beta_3}s_{\beta_2})s_{\delta-\rho})\underline{u})^{\infty}$$
where $\underline{u}\underline{v}$ is a reduced expression of $w_0^{W_{S\backslash \{s_{\beta_{n}}\}}}w_0$ where $n$ is even.

(4) For $W$ of type $E_6$, an infinite reduced word of $\widetilde{W}$ is minimal if and only if it has a reduced expression of the forms
$$(\underline{v}s_{\delta-\rho}(s_{\beta_2}s_{\beta_4}s_{\beta_5}s_{\beta_3}s_{\beta_4}s_{\beta_1}s_{\beta_2}s_{\beta_3}s_{\beta_4}s_{\beta_5} s_{\beta_6}s_{\delta-\rho})^2s_{\beta_2}s_{\beta_4}s_{\beta_5}s_{\beta_3}s_{\beta_4}s_{\beta_2}s_{\delta-\rho}\underline{u})^{\infty}$$
where $\underline{u}\underline{v}$ is a reduced expression of $w_0^{W_{S\backslash \{s_{\beta_{1}}\}}}w_0$,
$$(\underline{v}s_{\delta-\rho}\underline{u})^{\infty}$$
where $\underline{u}\underline{v}$ is a reduced expression of $w_0^{W_{S\backslash \{s_{\beta_{2}}\}}}w_0$,
$$(\underline{v}s_{\delta-\rho}(s_{\beta_2}s_{\beta_4}s_{\beta_5}s_{\beta_6}s_{\beta_3}s_{\beta_4}s_{\beta_5}s_{\beta_2}s_{\beta_4}s_{\beta_1}s_{\beta_3}s_{\beta_4}s_{\beta_2}s_{\beta_5}s_{\beta_4}s_{\beta_6}s_{\beta_5}s_{\delta-\rho})^2$$
$$(s_{\beta_2}s_{\beta_4}s_{\beta_3}s_{\beta_5}s_{\beta_4}s_{\beta_2}s_{\beta_1}s_{\beta_3}s_{\beta_4}s_{\beta_6}s_{\beta_5}s_{\beta_4}s_{\beta_3}s_{\beta_2}s_{\beta_4}s_{\delta-\rho})$$
$$(s_{\beta_2}s_{\beta_4}s_{\beta_3}s_{\beta_5}s_{\beta_6}s_{\beta_4}s_{\beta_1}s_{\beta_5}s_{\beta_3}s_{\beta_4}s_{\beta_2}s_{\delta-\rho})\underline{u})^{\infty}$$
where $\underline{u}\underline{v}$ is a reduced expression of $w_0^{W_{S\backslash \{s_{\beta_{3}}\}}}w_0$,
$$(\underline{v}s_{\delta-\rho}s_{\beta_2}s_{\beta_4}s_{\beta_3}s_{\beta_5}s_{\beta_6}s_{\beta_4}s_{\beta_1}s_{\beta_5}s_{\beta_3}s_{\beta_4}s_{\beta_2}s_{\delta-\rho}\underline{u})^{\infty}$$
where $\underline{u}\underline{v}$ is a reduced expression of $w_0^{W_{S\backslash \{s_{\beta_{4}}\}}}w_0$,
$$(\underline{v}s_{\delta-\rho}(s_{\beta_2}s_{\beta_4}s_{\beta_3}s_{\beta_1}s_{\beta_5}s_{\beta_4}s_{\beta_3}s_{\beta_2}s_{\beta_4}s_{\beta_6}s_{\beta_5}s_{\beta_4}s_{\beta_2}s_{\beta_3}s_{\beta_4}s_{\beta_1}s_{\beta_3}s_{\delta-\rho})^2$$
$$(s_{\beta_2}s_{\beta_4}s_{\beta_5}s_{\beta_3}s_{\beta_4}s_{\beta_2}s_{\beta_6}s_{\beta_5}s_{\beta_4}s_{\beta_1}s_{\beta_3}s_{\beta_4}s_{\beta_5}s_{\beta_2}s_{\beta_4}s_{\delta-\rho})$$
$$(s_{\beta_2}s_{\beta_4}s_{\beta_3}s_{\beta_5}s_{\beta_6}s_{\beta_4}s_{\beta_1}s_{\beta_5}s_{\beta_3}s_{\beta_4}s_{\beta_2}s_{\delta-\rho})\underline{u})^{\infty}$$
where $\underline{u}\underline{v}$ is a reduced expression of $w_0^{W_{S\backslash \{s_{\beta_{5}}\}}}w_0$,
$$(\underline{v}s_{\delta-\rho}(s_{\beta_2}s_{\beta_4}s_{\beta_3}s_{\beta_5}s_{\beta_4}s_{\beta_6}s_{\beta_2}s_{\beta_5}s_{\beta_4}s_{\beta_3} s_{\beta_1}s_{\delta-\rho})^2s_{\beta_2}s_{\beta_4}s_{\beta_3}s_{\beta_5}s_{\beta_4}s_{\beta_2}s_{\delta-\rho}\underline{u})^{\infty}$$
where $\underline{u}\underline{v}$ is a reduced expression of $w_0^{W_{S\backslash \{s_{\beta_{6}}\}}}w_0$.

(5) For $W$ of type $E_7$, an infinite reduced word of $\widetilde{W}$ is minimal if and only if it has a reduced expression of the forms
$$(\underline{v}s_{\delta-\rho}\underline{u})^{\infty}$$
where $\underline{u}\underline{v}$ is a reduced expression of $w_0^{W_{S\backslash \{s_{\beta_{1}}\}}}w_0$,
$$(\underline{v}s_{\delta-\rho}s_{\beta_1}s_{\beta_3}s_{\beta_4}s_{\beta_2}s_{\beta_5}s_{\beta_4}s_{\beta_3}s_{\beta_1}s_{\beta_6}s_{\beta_5}s_{\beta_4}s_{\beta_2}s_{\beta_3}s_{\beta_4}s_{\beta_5}$$
$$s_{\beta_7}s_{\beta_6}s_{\beta_5}s_{\beta_4}s_{\beta_3}s_{\beta_2}s_{\beta_1}s_{\beta_4}s_{\beta_3}s_{\beta_5}s_{\beta_4}s_{\beta_2}s_{\delta-\rho}s_{\beta_1}s_{\beta_3}s_{\beta_4}s_{\beta_2}s_{\beta_5}s_{\beta_4}s_{\beta_3}s_{\beta_1}s_{\beta_6}s_{\beta_5}$$
$$s_{\beta_4}s_{\beta_2}s_{\beta_3}s_{\beta_4}s_{\beta_7}s_{\beta_5}s_{\beta_6}s_{\delta-\rho}s_{\beta_1}s_{\beta_3}s_{\beta_4}s_{\beta_5}s_{\beta_2}s_{\beta_4}s_{\beta_3}s_{\beta_1}s_{\delta-\rho}\underline{u})^{\infty}$$
where $\underline{u}\underline{v}$ is a reduced expression of $w_0^{W_{S\backslash \{s_{\beta_{2}}\}}}w_0$,
$$(\underline{v}s_{\delta-\rho}s_{\beta_1}s_{\beta_3}s_{\beta_4}s_{\beta_5}s_{\beta_2}s_{\beta_6}s_{\beta_4}s_{\beta_7}s_{\beta_5}s_{\beta_3}s_{\beta_6}s_{\beta_4}s_{\beta_5}s_{\beta_2}s_{\beta_4}s_{\beta_3}s_{\beta_1}s_{\delta-\rho}\underline{u})^{\infty}$$
where $\underline{u}\underline{v}$ is a reduced expression of $w_0^{W_{S\backslash \{s_{\beta_{3}}\}}}w_0$,
$$(\underline{v}s_{\delta-\rho}s_{\beta_1}s_{\beta_3}s_{\beta_4}s_{\beta_2}s_{\beta_5}s_{\beta_4}s_{\beta_3}s_{\beta_1}s_{\beta_6}s_{\beta_5}s_{\beta_4}s_{\beta_3}s_{\beta_2}s_{\beta_4}s_{\beta_7}$$
$$s_{\beta_6}s_{\beta_5}s_{\beta_4}s_{\beta_2}s_{\beta_3}s_{\beta_4}s_{\beta_1}s_{\beta_3}s_{\delta-\rho}s_{\beta_1}s_{\beta_3}s_{\beta_4}s_{\beta_5}s_{\beta_2}$$
$$s_{\beta_6}s_{\beta_4}s_{\beta_7}s_{\beta_5}s_{\beta_3}s_{\beta_6}s_{\beta_4}s_{\beta_5}s_{\beta_2}s_{\beta_4}s_{\beta_3}s_{\beta_1}s_{\delta-\rho}\underline{u})^{\infty}$$
where $\underline{u}\underline{v}$ is a reduced expression of $w_0^{W_{S\backslash \{s_{\beta_{4}}\}}}w_0$,
$$(\underline{v}s_{\delta-\rho}s_{\beta_1}s_{\beta_3}s_{\beta_4}s_{\beta_5}s_{\beta_6}s_{\beta_7}s_{\beta_2}s_{\beta_4}s_{\beta_3}s_{\beta_5}s_{\beta_1}s_{\beta_4}s_{\beta_2}s_{\beta_6}$$
$$s_{\beta_3}s_{\beta_4}s_{\beta_5}s_{\beta_6}s_{\beta_7}s_{\beta_4}s_{\beta_3}s_{\beta_1}s_{\beta_2}s_{\beta_4}s_{\beta_5}s_{\beta_3}s_{\beta_6}s_{\beta_4}s_{\beta_5}s_{\delta-\rho}s_{\beta_1}s_{\beta_3}$$
$$s_{\beta_4}s_{\beta_5}s_{\beta_6}s_{\beta_7}s_{\beta_2}s_{\beta_4}s_{\beta_3}s_{\beta_5}s_{\beta_1}s_{\beta_4}s_{\beta_2}s_{\beta_6}s_{\beta_3}s_{\beta_4}s_{\beta_5}s_{\beta_6}s_{\beta_4}s_{\beta_3}s_{\beta_1}$$
$$s_{\beta_2}s_{\beta_4}s_{\beta_5}s_{\beta_3}s_{\beta_4}s_{\delta-\rho}s_{\beta_1}s_{\beta_3}s_{\beta_4}s_{\beta_2}s_{\beta_5}s_{\beta_4}s_{\beta_3}s_{\beta_1}s_{\beta_6}s_{\beta_5}s_{\beta_4}s_{\beta_3}s_{\beta_2}$$
$$s_{\beta_4}s_{\beta_7}s_{\beta_6}s_{\beta_5}s_{\beta_4}s_{\beta_2}s_{\beta_3}s_{\beta_4}s_{\beta_1}s_{\beta_3}s_{\delta-\rho}s_{\beta_1}s_{\beta_3}s_{\beta_4}s_{\beta_5}s_{\beta_2}s_{\beta_6}s_{\beta_4}s_{\beta_7}$$
$$s_{\beta_5}s_{\beta_3}s_{\beta_6}s_{\beta_4}s_{\beta_5}s_{\beta_2}s_{\beta_4}s_{\beta_3}s_{\beta_1}s_{\delta-\rho}\underline{u})^{\infty}$$
where $\underline{u}\underline{v}$ is a reduced expression of $w_0^{W_{S\backslash \{s_{\beta_{5}}\}}}w_0$,
$$(\underline{v}s_{\delta-\rho}s_{\beta_1}s_{\beta_3}s_{\beta_4}s_{\beta_5}s_{\beta_2}s_{\beta_4}s_{\beta_3}s_{\beta_1}s_{\delta-\rho}\underline{u})^{\infty}$$
where $\underline{u}\underline{v}$ is a reduced expression of $w_0^{W_{S\backslash \{s_{\beta_{6}}\}}}w_0$,
$$(\underline{v}s_{\delta-\rho}s_{\beta_1}s_{\beta_3}s_{\beta_4}s_{\beta_2}s_{\beta_5}s_{\beta_6}s_{\beta_4}s_{\beta_5}s_{\beta_3}s_{\beta_4}s_{\beta_1}s_{\beta_2}s_{\beta_3}s_{\beta_4}s_{\beta_5}s_{\beta_6}s_{\delta-\rho}$$
$$s_{\beta_1}s_{\beta_3}s_{\beta_4}s_{\beta_5}s_{\beta_2}s_{\beta_4}s_{\beta_3}s_{\beta_1}s_{\delta-\rho}\underline{u})^{\infty}$$
where $\underline{u}\underline{v}$ is a reduced expression of $w_0^{W_{S\backslash \{s_{\beta_{7}}\}}}w_0$.

(6) For $W$ of type $E_8$, an infinite reduced word of $\widetilde{W}$ is minimal if and only if it has a reduced expression of the forms
$$(\underline{v}s_{\delta-\rho}s_{\beta_8}s_{\beta_7}s_{\beta_6}s_{\beta_5}s_{\beta_4}s_{\beta_3}s_{\beta_2}s_{\beta_4}s_{\beta_5}s_{\beta_6}s_{\beta_7}s_{\beta_8}s_{\delta-\rho}\underline{u})^{\infty}$$
where $\underline{u}\underline{v}$ is a reduced expression of $w_0^{W_{S\backslash \{s_{\beta_{1}}\}}}w_0$,
$$(\underline{v}s_{\delta-\rho}s_{\beta_8}s_{\beta_7}s_{\beta_6}s_{\beta_5}s_{\beta_4}s_{\beta_2}s_{\beta_3}s_{\beta_4}s_{\beta_5}s_{\beta_6}s_{\beta_7}s_{\beta_8}s_{\beta_1}s_{\beta_3}s_{\beta_4}s_{\beta_5}s_{\beta_2}$$
$$s_{\beta_6}s_{\beta_4}s_{\beta_7}
s_{\beta_5}s_{\beta_3}s_{\beta_6}s_{\beta_4}s_{\beta_5}s_{\beta_2}s_{\beta_4}s_{\beta_3}s_{\beta_1}s_{\delta-\rho}s_{\beta_8}s_{\beta_7}s_{\beta_6}s_{\beta_5}s_{\beta_4}s_{\beta_3}s_{\beta_2}s_{\beta_4}s_{\beta_5}s_{\beta_6}$$
$$s_{\beta_7}s_{\beta_8}s_{\delta-\rho}\underline{u})^{\infty}$$
where $\underline{u}\underline{v}$ is a reduced expression of $w_0^{W_{S\backslash \{s_{\beta_{2}}\}}}w_0$,
$$(\underline{v}s_{\delta-\rho}s_{\beta_8}s_{\beta_7}s_{\beta_6}s_{\beta_5}s_{\beta_4}s_{\beta_3}s_{\beta_1}s_{\beta_2}s_{\beta_4}s_{\beta_3}s_{\beta_5}s_{\beta_4}s_{\beta_2}s_{\beta_6}s_{\beta_5}s_{\beta_4}s_{\beta_3}$$ $$s_{\beta_1}s_{\beta_7}s_{\beta_6}s_{\beta_5}s_{\beta_4}s_{\beta_2}s_{\beta_3}s_{\beta_4}s_{\beta_5}s_{\beta_8}s_{\beta_7}s_{\beta_6}s_{\beta_5}s_{\beta_4}s_{\beta_3}s_{\beta_2}s_{\beta_1}s_{\beta_4}s_{\beta_3}s_{\beta_5}s_{\beta_4}s_{\beta_2}s_{\delta-\rho}$$
$$s_{\beta_8}s_{\beta_7}s_{\beta_6}s_{\beta_5}s_{\beta_4}s_{\beta_2}s_{\beta_3}s_{\beta_4}s_{\beta_5}s_{\beta_6}s_{\beta_7}s_{\beta_8}s_{\beta_1}s_{\beta_3}s_{\beta_4}s_{\beta_5}s_{\beta_2}s_{\beta_6}s_{\beta_4}s_{\beta_7}s_{\beta_5}s_{\beta_3}s_{\beta_6}s_{\beta_4}$$
$$s_{\beta_5}s_{\beta_2}s_{\beta_4}s_{\beta_3}s_{\beta_1}s_{\delta-\rho}s_{\beta_8}s_{\beta_7}s_{\beta_6}s_{\beta_5}s_{\beta_4}s_{\beta_3}s_{\beta_2}s_{\beta_4}s_{\beta_5}s_{\beta_6}s_{\beta_7}s_{\beta_8}s_{\delta-\rho}\underline{u})^{\infty}$$
where $\underline{u}\underline{v}$ is a reduced expression of $w_0^{W_{S\backslash \{s_{\beta_{3}}\}}}w_0$,
$$(\underline{v}s_{\delta-\rho}s_{\beta_8}s_{\beta_7}s_{\beta_6}s_{\beta_5}s_{\beta_4}s_{\beta_3}s_{\beta_2}s_{\beta_4}s_{\beta_5}s_{\beta_6}s_{\beta_7}s_{\beta_8}s_{\beta_1}s_{\beta_3}s_{\beta_4}s_{\beta_2}s_{\beta_5}s_{\beta_4}s_{\beta_3}s_{\beta_1}s_{\beta_6}$$
$$s_{\beta_5}s_{\beta_4}s_{\beta_3}s_{\beta_2}s_{\beta_4}s_{\beta_5}s_{\beta_6}s_{\beta_7}s_{\beta_8}s_{\beta_6}s_{\beta_5}s_{\beta_4}s_{\beta_2}s_{\beta_3}s_{\beta_4}s_{\beta_5}s_{\beta_6}s_{\beta_7}s_{\beta_1}s_{\beta_3}s_{\beta_4}s_{\beta_5}s_{\beta_2}s_{\beta_6}s_{\beta_4}$$
$$s_{\beta_5}s_{\delta-\rho}s_{\beta_8}s_{\beta_7}s_{\beta_6}s_{\beta_5}s_{\beta_4}s_{\beta_2}s_{\beta_3}s_{\beta_4}s_{\beta_1}s_{\beta_3}s_{\beta_5}s_{\beta_6}s_{\beta_7}s_{\beta_8}s_{\beta_4}s_{\beta_5}s_{\beta_6}s_{\beta_7}s_{\beta_2}s_{\beta_4}s_{\beta_3}s_{\beta_5}s_{\beta_1}$$
$$s_{\beta_4}s_{\beta_2}s_{\beta_6}s_{\beta_3}s_{\beta_4}s_{\beta_5}s_{\beta_6}s_{\beta_7}s_{\beta_8}s_{\beta_4}s_{\beta_3}s_{\beta_1}s_{\beta_2}s_{\beta_4}s_{\beta_5}s_{\beta_3}s_{\beta_6}s_{\beta_4}s_{\beta_7}s_{\beta_5}s_{\beta_6}s_{\delta-\rho}s_{\beta_8}s_{\beta_7}s_{\beta_6}$$
$$s_{\beta_5}s_{\beta_4}s_{\beta_3}s_{\beta_2}s_{\beta_1}s_{\beta_4}s_{\beta_3}s_{\beta_5}s_{\beta_4}s_{\beta_6}s_{\beta_7}s_{\beta_8}s_{\beta_5}s_{\beta_6}s_{\beta_7}s_{\beta_2}s_{\beta_4}s_{\beta_5}s_{\beta_6}s_{\beta_3}s_{\beta_4}s_{\beta_2}s_{\beta_5}s_{\beta_4}s_{\beta_1}s_{\beta_3}$$
$$s_{\beta_4}s_{\beta_5}s_{\beta_2}s_{\beta_6}s_{\beta_4}s_{\beta_7}s_{\beta_5}s_{\beta_8}s_{\beta_6}s_{\beta_7}s_{\delta-\rho}s_{\beta_8}s_{\beta_7}s_{\beta_6}s_{\beta_5}s_{\beta_4}s_{\beta_2}s_{\beta_3}s_{\beta_4}s_{\beta_1}s_{\beta_5}s_{\beta_3}s_{\beta_6}s_{\beta_4}s_{\beta_7}$$
$$s_{\beta_5}s_{\beta_2}s_{\beta_6}s_{\beta_4}s_{\beta_5}s_{\beta_3}s_{\beta_4}s_{\beta_1}s_{\beta_3}s_{\beta_2}s_{\beta_4}s_{\beta_5}s_{\beta_6}s_{\beta_7}s_{\beta_8}s_{\delta-\rho}\underline{u})^{\infty}$$
where $\underline{u}\underline{v}$ is a reduced expression of $w_0^{W_{S\backslash \{s_{\beta_{4}}\}}}w_0$,
$$(\underline{v}s_{\delta-\rho}s_{\beta_8}s_{\beta_7}s_{\beta_6}s_{\beta_5}s_{\beta_4}s_{\beta_2}s_{\beta_3}s_{\beta_4}s_{\beta_1}s_{\beta_3}s_{\beta_5}s_{\beta_6}s_{\beta_7}s_{\beta_8}s_{\beta_4}s_{\beta_5}s_{\beta_6}s_{\beta_7}s_{\beta_2}s_{\beta_4}s_{\beta_3}s_{\beta_5}s_{\beta_1}$$
$$s_{\beta_4}s_{\beta_2}s_{\beta_6}s_{\beta_3}s_{\beta_4}s_{\beta_5}s_{\beta_6}s_{\beta_7}s_{\beta_8}s_{\beta_4}s_{\beta_3}s_{\beta_1}s_{\beta_2}s_{\beta_4}s_{\beta_5}s_{\beta_3}s_{\beta_6}s_{\beta_4}s_{\beta_7}s_{\beta_5}s_{\beta_6}s_{\delta-\rho}s_{\beta_8}s_{\beta_7}s_{\beta_6}s_{\beta_5}$$
$$s_{\beta_4}s_{\beta_3}s_{\beta_2}s_{\beta_1}s_{\beta_4}s_{\beta_3}s_{\beta_5}s_{\beta_4}s_{\beta_6}s_{\beta_7}s_{\beta_8}s_{\beta_5}s_{\beta_6}s_{\beta_7}s_{\beta_2}s_{\beta_4}s_{\beta_5}s_{\beta_6}s_{\beta_3}s_{\beta_4}s_{\beta_2}s_{\beta_5}s_{\beta_4}s_{\beta_1}s_{\beta_3}s_{\beta_4}$$
$$s_{\beta_5}s_{\beta_2}s_{\beta_6}s_{\beta_4}s_{\beta_7}s_{\beta_5}s_{\beta_8}s_{\beta_6}s_{\beta_7}s_{\delta-\rho}s_{\beta_8}s_{\beta_7}s_{\beta_6}s_{\beta_5}s_{\beta_4}s_{\beta_2}s_{\beta_3}s_{\beta_4}s_{\beta_1}s_{\beta_5}s_{\beta_3}s_{\beta_6}s_{\beta_4}s_{\beta_7}s_{\beta_5}$$
$$s_{\beta_2}s_{\beta_6}s_{\beta_4}s_{\beta_5}s_{\beta_3}s_{\beta_4}s_{\beta_1}s_{\beta_3}s_{\beta_2}s_{\beta_4}s_{\beta_5}s_{\beta_6}s_{\beta_7}s_{\beta_8}s_{\delta-\rho}\underline{u})^{\infty}$$
where $\underline{u}\underline{v}$ is a reduced expression of $w_0^{W_{S\backslash \{s_{\beta_{5}}\}}}w_0$,
$$(\underline{v}s_{\delta-\rho}s_{\beta_8}s_{\beta_7}s_{\beta_6}s_{\beta_5}s_{\beta_4}s_{\beta_3}s_{\beta_2}s_{\beta_1}s_{\beta_4}s_{\beta_3}s_{\beta_5}s_{\beta_4}s_{\beta_6}s_{\beta_7}s_{\beta_8}s_{\beta_5}s_{\beta_6}s_{\beta_7}s_{\beta_2}s_{\beta_4}s_{\beta_5}$$
$$s_{\beta_6}s_{\beta_3}s_{\beta_4}s_{\beta_2}s_{\beta_5}s_{\beta_4}s_{\beta_1}s_{\beta_3}s_{\beta_4}s_{\beta_5}s_{\beta_2}s_{\beta_6}s_{\beta_4}s_{\beta_7}s_{\beta_5}s_{\beta_8}s_{\beta_6}s_{\beta_7}s_{\delta-\rho}s_{\beta_8}s_{\beta_7}s_{\beta_6}s_{\beta_5}s_{\beta_4}$$
$$s_{\beta_2}s_{\beta_3}s_{\beta_4}s_{\beta_1}s_{\beta_5}s_{\beta_3}s_{\beta_6}s_{\beta_4}s_{\beta_7}s_{\beta_5}s_{\beta_2}s_{\beta_6}s_{\beta_4}s_{\beta_5}s_{\beta_3}s_{\beta_4}s_{\beta_1}s_{\beta_3}s_{\beta_2}s_{\beta_4}s_{\beta_5}s_{\beta_6}s_{\beta_7}s_{\beta_8}$$
$$s_{\delta-\rho}\underline{u})^{\infty}$$
where $\underline{u}\underline{v}$ is a reduced expression of $w_0^{W_{S\backslash \{s_{\beta_{6}}\}}}w_0$,
$$(\underline{v}s_{\delta-\rho}s_{\beta_8}s_{\beta_7}s_{\beta_6}s_{\beta_5}s_{\beta_4}s_{\beta_2}s_{\beta_3}s_{\beta_4}s_{\beta_1}s_{\beta_5}s_{\beta_3}s_{\beta_6}s_{\beta_4}s_{\beta_7}s_{\beta_5}s_{\beta_2}s_{\beta_6}s_{\beta_4}s_{\beta_5}s_{\beta_3}s_{\beta_4}$$
$$s_{\beta_1}s_{\beta_3}s_{\beta_2}s_{\beta_4}s_{\beta_5}s_{\beta_6}s_{\beta_7}s_{\beta_8}s_{\delta-\rho}\underline{u})^{\infty}$$
where $\underline{u}\underline{v}$ is a reduced expression of $w_0^{W_{S\backslash \{s_{\beta_{7}}\}}}w_0$,
$$(\underline{v}s_{\delta-\rho}\underline{u})^{\infty}$$
where $\underline{u}\underline{v}$ is a reduced expression of $w_0^{W_{S\backslash \{s_{\beta_{8}}\}}}w_0$.

(7) For $W$ of type $F_4$, an infinite reduced word of $\widetilde{W}$ is minimal if and only if it has a reduced expression of the forms
$$(\underline{v}s_{\delta-\rho}\underline{u})^{\infty}$$
where $\underline{u}\underline{v}$ is a reduced expression of $w_0^{W_{S\backslash \{s_{\beta_{1}}\}}}w_0$,
$$(\underline{v}s_{\delta-\rho}s_{\beta_1}s_{\beta_2}s_{\beta_3}s_{\beta_4}s_{\beta_2}s_{\beta_3}s_{\beta_2}s_{\beta_1}s_{\delta-\rho}\underline{u})^{\infty}$$
where $\underline{u}\underline{v}$ is a reduced expression of $w_0^{W_{S\backslash \{s_{\beta_{2}}\}}}w_0$,
$$(\underline{v}s_{\delta-\rho}s_{\beta_1}s_{\beta_2}s_{\beta_3}s_{\beta_2}s_{\beta_1}s_{\beta_4}s_{\beta_3}s_{\beta_2}s_{\beta_3}s_{\beta_1}s_{\beta_2}s_{\delta-\rho}$$
$$s_{\beta_1}s_{\beta_2}s_{\beta_3}s_{\beta_4}s_{\beta_2}s_{\beta_3}s_{\beta_2}s_{\beta_1}s_{\delta-\rho}\underline{u})^{\infty}$$
where $\underline{u}\underline{v}$ is a reduced expression of $w_0^{W_{S\backslash \{s_{\beta_{3}}\}}}w_0$,
$$(\underline{v}s_{\delta-\rho}s_{\beta_1}s_{\beta_2}s_{\beta_3}s_{\beta_2}s_{\beta_1}s_{\delta-\rho}\underline{u})^{\infty}$$
where $\underline{u}\underline{v}$ is a reduced expression of $w_0^{W_{S\backslash \{s_{\beta_{4}}\}}}w_0$.

(8) For $W$ of type $G_2$, an infinite reduced word of $\widetilde{W}$ is minimal if and only if it has a reduced expression of the forms
$$(\underline{v}s_{\delta-\rho}s_{\beta_2}s_{\beta_1}s_{\beta_2}s_{\delta-\rho}\underline{u})^{\infty}$$
where $\underline{u}\underline{v}$ is a reduced expression of $w_0^{W_{S\backslash \{s_{\beta_{1}}\}}}w_0$,
$$(\underline{v}s_{\delta-\rho}\underline{u})^{\infty}$$
where $\underline{u}\underline{v}$ is a reduced expression of $w_0^{W_{S\backslash \{s_{\beta_{2}}\}}}w_0$.
\end{theorem}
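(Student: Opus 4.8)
The plan is to reduce the theorem, via Theorem \ref{thmaffineminimal}, to the computation of one explicit reduced expression per deleted simple root, and then to generate the whole family by a cyclic rotation of reduced words. By Theorem \ref{thmaffineminimal}(1) every minimal infinite reduced word has inversion set $\widehat{\Psi^+_{\Delta_1,\emptyset}}$ with $\Delta_1$ of corank one in the simple system $\Delta$ of $\Psi^+$, and the orbit argument in the proof of Theorem \ref{thmaffineminimal}(2) shows that, under the natural action of the finite Weyl group $W$, each such set is conjugate to exactly one $\widehat{\Phi^+_{M,\emptyset}}$ with $M=\Pi\setminus\{\beta_i\}$ (so that $W_M=W_{S\setminus\{s_{\beta_i}\}}$). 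Hence it suffices to (a) exhibit one infinite reduced word $u_i$ with $N(u_i)=\widehat{\Phi^+_{M,\emptyset}}$, together with a reduced expression of a period, and (b) check that the $W$-orbit of $u_i$ is realized precisely by the stated words.

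For step (a) I would realize $u_i$ as the power of a translation. Let $\lambda_i$ be the shortest vector of the coroot lattice with $(\beta_j,\lambda_i)=0$ for $j\neq i$ and $(\beta_i,\lambda_i)>0$, i.e.\ the primitive coroot-lattice point on the ray of the fundamental coweight dual to $\beta_i$. From $t_{\lambda_i}(\alpha+n\delta)=\alpha+(n+(\alpha,\lambda_i))\delta$ one computes
$$N(t_{\lambda_i})=\{\,\alpha+n\delta \mid \alpha\in\Phi^+,\ 0\le n<(\alpha,\lambda_i)\,\},$$
and therefore
$$\bigcup_{k\ge 1} N(t_{\lambda_i}^{\,k})=\{\,\alpha+n\delta\mid \alpha\in\Phi^+,\ (\alpha,\lambda_i)>0,\ n\ge 0\,\}.$$
Since the $\beta_i$-coefficient of $\alpha$ controls $(\alpha,\lambda_i)$, one has $(\alpha,\lambda_i)=0$ exactly for $\alpha\in\Phi_M$, so this union equals $\widehat{\Phi^+_{M,\emptyset}}$. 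As $t_{\lambda_i}$ is a dominant translation, $l(t_{\lambda_i}^{\,k})=k\,l(t_{\lambda_i})$ and $t_{\lambda_i}$ is straight; thus $u_i=t_{\lambda_i}^{\infty}$ is the required word, and it is the infinite repetition of any reduced expression of $t_{\lambda_i}$. What remains is to produce such a reduced expression in the prescribed shape $P_i\underline{u}\underline{v}$, where $\underline{u}\underline{v}$ is a reduced word of $c:=w_0^{W_M}w_0$ and $P_i$ is the block built from $s_{\delta-\rho}$ and the $s_{\beta_j}$ listed in the statement.

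For step (b) the mechanism is cyclic rotation. If $w=ab$ is straight with $l(w)=l(a)+l(b)$, then $\underline{a}\underline{b}\underline{a}\underline{b}\cdots=\underline{a}\cdot(\underline{b}\underline{a})^{\infty}$, whence $N((\underline{b}\underline{a})^{\infty})=a^{-1}\cdot N((\underline{a}\underline{b})^{\infty})$ for the action of $\widetilde W$ on biclosed sets. Applying this to the period $P_i\underline{u}\underline{v}$ and moving the suffix $\underline{v}$ to the front replaces $t_{\lambda_i}$ by the conjugate $v\,t_{\lambda_i}\,v^{-1}=t_{v(\lambda_i)}$ and produces exactly $(\underline{v}P_i\underline{u})^{\infty}$, whose inversion set is $\widehat{\Psi^+_{v(M),\emptyset}}$ with $\Psi^+=v(\Phi^+)$. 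Because $\underline{u}\underline{v}$ is a word in the finite reflections only, every conjugating element $v$ lies in $W$, as this rotation requires. Since the stabilizer of $\lambda_i$ in $W$ is $W_M$, one checks that as $v$ runs over the elements with $c=uv$ and $l(u)+l(v)=l(c)$, the translations $t_{v(\lambda_i)}$ run over the whole $W$-orbit of $t_{\lambda_i}$; together with the count in Theorem \ref{thmaffineminimal}(2) this shows that the words $(\underline{v}P_i\underline{u})^{\infty}$ are reduced, minimal, and exhaust the minimal infinite reduced words of ``type $i$''.

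The conceptual steps (a) and (b) are uniform, so everything comes down to the type-by-type verification deferred to Section \ref{translation}: for each type and each $i$ one must check that the explicit word $P_i\underline{u}\underline{v}$ is reduced and that $(P_i c)^{\infty}=t_{\lambda_i}^{\infty}$. Concretely this means tracking, block by block, which affine roots $\alpha+n\delta$ are turned into inversions by each factor $(s_{\beta_j}\cdots)s_{\delta-\rho}$, and confirming that one pass through the period raises the $\delta$-coefficients exactly as $t_{\lambda_i}$ prescribes. I expect this bookkeeping to be the main obstacle, and to differ genuinely across types: the parity split in type $D_n$ and the passage to a multiple of a fundamental coweight (reflected in exponents such as $\tfrac{i-1}{2}$ and $\tfrac{n+1}{2}$) complicate the count, while in the exceptional types $E_6,E_7,E_8$ the periods are long enough that reducedness and the inversion-set identity essentially have to be verified by explicit computation.
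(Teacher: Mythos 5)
Your strategy is the paper's: Theorem \ref{thmaffineminimal} reduces the problem to the corank-one biclosed sets; the standard set $\widehat{\Phi^+_{M,\emptyset}}$ is realized as the inversion set of the infinite power of the dominant (``distinguished'') translation; the whole $W$-orbit is then produced by rotating reduced expressions of the straight period (the paper's Lemma \ref{first}, resting on $u\cdot N(v)=N(uv)$); and the explicit identities are deferred to the type-by-type computations of Section \ref{translation}. Your direct computation of $N(t_{\lambda_i})$ replaces the citation of \cite{orderpaper} in Lemma \ref{two}(1), which is fine. Two of your shortcuts, however, hide the actual content of Lemma \ref{first}: the assertion that the $t_{v(\lambda_i)}$ exhaust the $W$-orbit is exactly Bj\"orner's generalized-quotient theorem \cite{bjornerquotient} (the left prefixes of $c=w_0^{W_M}w_0$ are precisely the inverses of the minimal-length representatives of $W/W_M$), and the identification of the rotated word's inversion set with $\widehat{\Psi^+_{\,\cdot\,,\emptyset}}$ — rather than merely with the dot-action image $q\cdot\widehat{\Phi^+_{M,\emptyset}}$, which is a priori a different set — requires Proposition 5.13(b) of \cite{DyerReflOrder} and is valid only because the conjugator is such a minimal representative. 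Both facts must be invoked, not asserted.

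The concrete error is the placement of the block. You claim the dominant translation has a reduced expression of shape $P_i\underline{u}\,\underline{v}$, i.e. $t_{\lambda_i}=P_ic$, and accordingly propose to verify $(P_ic)^{\infty}=t_{\lambda_i}^{\infty}$ type by type. This is false, and the check would fail in every type: the identity proved in Section \ref{translation} is $t_{\lambda_i}=cP_i$ (for instance $t_{2\beta_1+4\beta_2+\cdots+i\beta_n}=w_0^{W_{S\backslash\{s_{\beta_i}\}}}w_0\,s_{\delta-\rho}(s_{\beta_1}s_{\delta-\rho})\cdots$ in type $C_n$), and the proof of Theorem \ref{main} states the target as $\underline{u}\,\underline{v}\,\underline{y}$ being reduced, i.e. $c$ first. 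Since $w_0^{W_M}$ fixes $\lambda_i$, one has $P_ic=c^{-1}(cP_i)c=t_{c^{-1}(\lambda_i)}=t_{w_0(\lambda_i)}$, the anti-dominant conjugate, whose inversions all lie over $\Phi^-$; hence $\Phi_{(P_ic)^{\infty}}\neq\widehat{\Phi^+_{M,\emptyset}}$. The repair is only a rotation — verify that $\underline{u}\,\underline{v}\,P_i$ is reduced and equals $t_{\lambda_i}$, and conjugate by the inverses of the prefixes $\underline{u}$ of $c$ (the minimal coset representatives) rather than by the suffixes $\underline{v}$ — after which your family $(\underline{v}P_i\underline{u})^{\infty}$ and the exhaustion count go through unchanged, since moving the base point within the cycle merely reindexes the same set of words.
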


To prove the theorem, we begin with the following lemmas.


\begin{lemma}\label{first}
 For $w\in W$, one has  $w\Phi^+_{\Delta_1,\emptyset}=u\Phi^+_{\Delta_1,\emptyset}$ such that $u^{-1}$ is a left prefix of $w_0^{W_{\Delta_1}}w_0$.

 Furthermore, suppose that

 (1) $(w_0^{W_{\Delta_1}}w_0)x$ is straight (in $\widetilde{W}$) and $w_0^{W_{\Delta_1}}w_0$ is a left prefix of $(w_0^{W_{\Delta_1}}w_0)x$,

 (2) $\Phi_{((w_0^{W_{\Delta_1}}w_0)x)^{\infty}}=\widehat{\Phi^+_{\Delta_1,\emptyset}}$.

 Then $\widehat{w\Phi^+_{\Delta_1,\emptyset}}=\Phi_{(u(w_0^{W_{\Delta_1}}w_0)xu^{-1})^{\infty}}$.
\end{lemma}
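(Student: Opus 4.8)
The plan is to handle the two assertions separately, with the second resting on rewriting the conjugate $u(w_0^{W_{\Delta_1}}w_0)xu^{-1}$ as a cyclic rotation and then translating the naive $W$-action into the twisted action on biclosed sets. Throughout write $c:=(w_0^{W_{\Delta_1}}w_0)x$ and recall $N(\cdot)=\Phi_{(\cdot)}$.

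\textbf{First assertion.} As in the proof of Theorem \ref{thmaffineminimal}(2), the stabilizer of $\Phi^+_{\Delta_1,\emptyset}$ under the natural $W$-action is exactly $W_{\Delta_1}$, so $w\Phi^+_{\Delta_1,\emptyset}$ depends only on the left coset $wW_{\Delta_1}$; I would take $u$ to be the minimal length representative of $wW_{\Delta_1}$, giving $w\Phi^+_{\Delta_1,\emptyset}=u\Phi^+_{\Delta_1,\emptyset}$. The key computation is $\Phi_{w_0^{W_{\Delta_1}}w_0}=\Phi^+\setminus\Phi_{\Delta_1}=\Phi^+_{\Delta_1,\emptyset}$: for $\alpha\in\Phi_{\Delta_1}\cap\Phi^+$ one has $(w_0^{W_{\Delta_1}}w_0)^{-1}\alpha=w_0w_0^{W_{\Delta_1}}\alpha\in\Phi^+$, while for $\alpha\in\Phi^+\setminus\Phi_{\Delta_1}$ it lands in $\Phi^-$, using that $w_0^{W_{\Delta_1}}$ permutes $\Phi^+\setminus\Phi_{\Delta_1}$. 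Since $u$ is a minimal coset representative, $u(\Phi_{\Delta_1}\cap\Phi^+)\subseteq\Phi^+$, hence $\Phi_{u^{-1}}\cap\Phi_{\Delta_1}=\emptyset$, i.e. $\Phi_{u^{-1}}\subseteq\Phi^+_{\Delta_1,\emptyset}=\Phi_{w_0^{W_{\Delta_1}}w_0}$, which is precisely $u^{-1}\leq w_0^{W_{\Delta_1}}w_0$.

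\textbf{Reduction to a cyclic rotation.} By the first assertion $u^{-1}\leq w_0^{W_{\Delta_1}}w_0$, and by hypothesis (1) $w_0^{W_{\Delta_1}}w_0$ is a left prefix of $c$, so $u^{-1}$ is a left prefix of $c$; write $c=u^{-1}d$ length-additively, whence $ucu^{-1}=du^{-1}$ is the cyclic rotation of the reduced word of $c$ by the letters of $u^{-1}$. Since $c$ is straight, $c^{\infty}$ is an infinite reduced word (section \ref{prelim}); deleting its initial block $u^{-1}$ leaves a contiguous tail, which is again an infinite reduced word because every contiguous substring of a reduced word is reduced, and this tail is literally $(du^{-1})^{\infty}$. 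Thus $du^{-1}=ucu^{-1}$ is straight and $(ucu^{-1})^{\infty}=u\cdot c^{\infty}$, the product in $\overline{W}$ of \cite{wang} removing the prefix $u^{-1}$.

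\textbf{Inversion sets and the twisted action.} Now $\Phi_{(ucu^{-1})^{\infty}}=N(uc^{\infty})=u\cdot N(c^{\infty})=u\cdot\widehat{\Phi^+_{\Delta_1,\emptyset}}$ by $u\cdot N(v)=N(uv)$ and hypothesis (2). It remains to identify $u\cdot\widehat{\Phi^+_{\Delta_1,\emptyset}}$ with $\widehat{u\Phi^+_{\Delta_1,\emptyset}}=\widehat{w\Phi^+_{\Delta_1,\emptyset}}$. Since $u\in W$ fixes $\delta$, one has $N_{\widetilde W}(u)=\Phi_u$, a finite set of gradient-zero positive roots. Comparing the naive image with the hat of the image gives $u\bigl(\widehat{\Phi^+_{\Delta_1,\emptyset}}\bigr)=\widehat{u\Phi^+_{\Delta_1,\emptyset}}\,\sqcup\,(-\Phi_u)$, the discrepancy $-\Phi_u$ arising exactly from the roots $\beta\in\Phi^+_{\Delta_1,\emptyset}$ with $u\beta<0$, namely $\beta\in\Phi_{u^{-1}}$ (here the inclusion $\Phi_{u^{-1}}\subseteq\Phi^+_{\Delta_1,\emptyset}$ from the first assertion is what guarantees the correction lies entirely among finite roots). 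Substituting into $u\cdot B=(N(u)\setminus u(-B))\cup(u(B)\setminus(-N(u)))$, the term $u(B)\setminus(-N(u))$ deletes exactly $-\Phi_u$ and returns $\widehat{u\Phi^+_{\Delta_1,\emptyset}}$, while $N(u)\subseteq u(-B)$ forces $N(u)\setminus u(-B)=\emptyset$. Hence $u\cdot\widehat{\Phi^+_{\Delta_1,\emptyset}}=\widehat{u\Phi^+_{\Delta_1,\emptyset}}=\widehat{w\Phi^+_{\Delta_1,\emptyset}}$, as required.

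The main obstacle is this last twisted-action computation: one must carefully separate the naive action $u(\widehat{D})$ from $\widehat{u(D)}$ and verify that the two correction terms in the formula for $u\cdot B$ precisely cancel the spurious negative roots. The inclusion $\Phi_{u^{-1}}\subseteq\Phi^+_{\Delta_1,\emptyset}$ (equivalently, the minimality of the coset representative $u$) is exactly the hypothesis that makes this cancellation work, which is why the first assertion is set up before the second. A minor bookkeeping point is to note that $\widehat{u\Phi^+_{\Delta_1,\emptyset}}=\widehat{\Xi^+_{u\Delta_1,\emptyset}}$ with positive system $\Xi^+=u\Phi^+$, so by section \ref{prelim} it is genuinely the inversion set of an infinite reduced word and the right-hand side is well defined.
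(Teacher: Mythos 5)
Your proposal is correct, and it reaches the conclusion by the same chain of identifications as the paper, namely
$\widehat{w\Phi^+_{\Delta_1,\emptyset}}=\widehat{u\Phi^+_{\Delta_1,\emptyset}}=u\cdot\widehat{\Phi^+_{\Delta_1,\emptyset}}=u\cdot\Phi_{c^{\infty}}=\Phi_{uc^{\infty}}=\Phi_{(ucu^{-1})^{\infty}}$ with $c=(w_0^{W_{\Delta_1}}w_0)x$ and $u$ the minimal length representative of $wW_{\Delta_1}$; the difference is that you prove directly what the paper outsources to citations. For the first assertion the paper invokes Bj\"{o}rner's theorem on generalized quotients to identify $w_0w_0^{W_{\Delta_1}}$ as the maximal element of $\{q\mid l(qs_{\alpha})>l(q),\ \forall\alpha\in\Delta_1\}$, whereas you compute $\Phi_{w_0^{W_{\Delta_1}}w_0}=\Phi^+\setminus\Phi_{\Delta_1}$ and $\Phi_{u^{-1}}\cap\Phi_{\Delta_1}=\emptyset$ and conclude from the definition of the weak order; this is more elementary and self-contained. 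For the equality $u\cdot\widehat{\Phi^+_{\Delta_1,\emptyset}}=\widehat{u\Phi^+_{\Delta_1,\emptyset}}$ the paper cites Proposition 5.13(b) of Dyer together with minimality of $u$, whereas you unwind the twisted action $u\cdot B=(N(u)\setminus u(-B))\cup(u(B)\setminus(-N(u)))$, identify the discrepancy $u(\widehat{\Phi^+_{\Delta_1,\emptyset}})=\widehat{u\Phi^+_{\Delta_1,\emptyset}}\cup(-\Phi_u)$, and check the two correction terms cancel; this has the merit of making explicit that the inclusion $\Phi_{u^{-1}}\subseteq\Phi^+_{\Delta_1,\emptyset}$ from the first assertion is exactly what the cancellation requires, which the paper leaves implicit. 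The one step where you are still leaning on the cited literature rather than proving things yourself is the identification $(ucu^{-1})^{\infty}=uc^{\infty}$: your prefix-deletion/rotation argument correctly shows that $ucu^{-1}=du^{-1}$ is straight and that $(du^{-1})^{\infty}$ is the tail of $c^{\infty}$, but equating that tail with the \emph{product} $uc^{\infty}$ of Definition 2.6 in the Wang reference is precisely the content of Lemma 2.7(d) there, which is what the paper cites for its fourth-to-fifth equality; so this step is a plausible sketch of the cited fact rather than an independent proof. Alternatively you could close this small gap without the product at all, by noting $\Phi_{c^{\infty}}=\Phi_{u^{-1}}\cup u^{-1}\Phi_{(du^{-1})^{\infty}}$ (disjointly) and solving for $\Phi_{(du^{-1})^{\infty}}$, which exhibits it directly as $u\cdot\Phi_{c^{\infty}}$.
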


\begin{proof}
It is well known that there exists a decomposition $w=uv$ such that $\ell(w)=\ell(u)+\ell(v)$, $v\in W_{\Delta_1}$ and no reduced expression of $u$ ends with $s_{\alpha},\alpha\in \Delta_1.$ (See Proposition 2.2.4 in \cite{bjornerbrenti}) Since $v$ stabilizes $\Phi^+_{\Delta_1,\emptyset}$, $w\Phi^+_{\Delta_1,\emptyset}=u\Phi^+_{\Delta_1,\emptyset}$. By Theorem 4.1 of \cite{bjornerquotient}, under the weak left order there exists a unique maximal element $p$ of the set $\{q\in W|\,l(qs_{\alpha})>l(q), \forall \alpha\in \Delta_1\}$ and therefore $u\leq p$ under the weak left order. And such an element is exactly $w_0w^{W_{\Delta_1}}_0$ (see section 2.5 of \cite{bjornerbrenti}). So the first conclusion follows.

Now we have
$$\widehat{w\Phi^+_{\Delta_1,\emptyset}}=\widehat{u\Phi^+_{\Delta_1,\emptyset}}=u\cdot \widehat{\Phi^+_{\Delta_1,\emptyset}}=u\cdot\Phi_{((w_0^{W_{\Delta_1}}w_0)x)^{\infty}}$$
$$=\Phi_{u((w_0^{W_{\Delta_1}}w_0)x)^{\infty}}=\Phi_{(u(w_0^{W_{\Delta_1}}w_0)xu^{-1})^{\infty}}.$$
The second equality follows from Proposition 5.13(b) in \cite{DyerReflOrder} and the fact that $u$ is the element of  minimal length in the coset $wW_{\Delta_1}$. The fourth equality follows from Lemma 2.7(d) in \cite{wang}.
\end{proof}

Given $w\in W$ and $\Phi^+_{\Delta_1,\emptyset}$, denote the unique element $u$ as in the above Lemma by $u[w,\Delta_1]$.

We call a coroot $\beta$ an $i$-th distinguished coroot if $(\beta_i,\beta)>0$ and $(\beta_j,\beta)=0$ for $j\neq i.$
We call  $t_{\beta}$ an  $i$-th distinguished ``translation'' if $\beta$ is an $i$-th distinguished coroot.

\begin{lemma}\label{two}
(1) Let $\beta$ be an $i$-th distinguished coroot. One has $\Phi^+_{\Delta\backslash\{\beta_i\},\emptyset} \subset \Phi_{t_{\beta}}$ and $\widehat{\Phi^+_{\Delta\backslash\{\beta_i\},\emptyset}}=\Phi_{t_{\beta}^{\infty}}$. Therefore $t_{\beta}^{\infty}$ is the minimal infinite reduced word whose inversion set is $\widehat{\Phi^+_{\Delta\backslash\{\beta_i\},\emptyset}}$.

(2) The element $w_0^{W_{S\backslash \{s_{\beta_i}\}}}w_0$ is a left prefix of $t_{\beta}$. Then $t_{\beta}=w_0^{W_{S\backslash \{s_{\beta_i}\}}}w_0x$ for some $x\in W$. For any $w\in W$,
$\widehat{w\Phi^+_{\Delta\backslash\{\beta_i\},\emptyset}}=\Phi_{(u[w,\Delta_1](w_0^{W_{\Delta_1}}w_0)x(u[w,\Delta_1])^{-1})^{\infty}}$ where $\Delta_1=\Delta\backslash\{\beta_i\}$.
\end{lemma}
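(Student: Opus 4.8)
The plan is to prove both parts by an explicit computation of the inversion sets of $t_{\beta}$ and its powers from the formula $t_v(u)=u+(u,v)\delta$, and then to feed the outcome directly into Lemma \ref{first}. Throughout write $\Delta_1=\Delta\backslash\{\beta_i\}$.

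For part (1) I would first record that $t_{\beta}^{-1}=t_{-\beta}$ acts by $t_{\beta}^{-1}(\alpha+n\delta)=\alpha+(n-(\alpha,\beta))\delta$ for $\alpha\in\Phi$. Testing the two families of positive roots of $\widetilde{\Phi}$ against this map — the $\alpha+n\delta$ with $\alpha\in\Phi^+,\ n\ge 0$, and the $\alpha+(n+1)\delta$ with $\alpha\in\Phi^-,\ n\ge 0$ — shows that such a root lies in $\Phi_{t_{\beta}}$ precisely when $(\alpha,\beta)\ge n+1$ in either case. The decisive input is the distinguished hypothesis: writing $\alpha=\sum_j c_j\beta_j\in\Phi^+$ with $c_j\ge 0$, one gets $(\alpha,\beta)=c_i(\beta_i,\beta)$, so $(\alpha,\beta)\ge 0$ for every positive root, with strict inequality exactly when $c_i>0$, i.e. exactly when $\alpha\in\Phi^+_{\Delta_1,\emptyset}$. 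Specializing to $n=0$ and identifying $\alpha$ with $\alpha+0\delta$ gives the inclusion $\Phi^+_{\Delta_1,\emptyset}\subset\Phi_{t_{\beta}}$.

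For the equality $\widehat{\Phi^+_{\Delta_1,\emptyset}}=\Phi_{t_{\beta}^{\infty}}$ I would use $t_{\beta}^{\,k}=t_{k\beta}$, so $\Phi_{t_{\beta}^{\infty}}=\bigcup_{k\ge 1}\Phi_{t_{k\beta}}$, and rerun the inversion criterion with $k\beta$ in place of $\beta$. In the first family, for $\alpha\in\Phi^+$ with $(\alpha,\beta)>0$ every $n\ge 0$ is eventually captured as $k\to\infty$, producing all of $\widehat{\{\alpha\}}$, whereas $(\alpha,\beta)=0$ captures nothing. In the second family one would need a negative root $\alpha$ with $(\alpha,\beta)>0$, equivalently a positive root with negative pairing against $\beta$, and the nonnegativity just established rules this out; so that family contributes nothing and the union is exactly $\widehat{\Phi^+_{\Delta_1,\emptyset}}$. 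Since $|\Delta_1|=|\Delta|-1$, Theorem \ref{thmaffineminimal}(1) identifies $\widehat{\Phi^+_{\Delta_1,\emptyset}}$ as the inversion set of a minimal infinite reduced word, and because an infinite reduced word is determined by its inversion set, $t_{\beta}^{\infty}$ is that word (note $t_{\beta}$ is straight, being a coroot translation, so $t_{\beta}^{\infty}$ is well defined).

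For part (2) I would first recall the standard fact that $\Phi_{w_0^{W_{\Delta_1}}w_0}=\Phi^+_{\Delta_1,\emptyset}$ (the inversion set of the inverse of the longest minimal coset representative $w_0w_0^{W_{\Delta_1}}$), and observe that under $W\hookrightarrow\widetilde{W}$ this inversion set is $\{\alpha+0\delta:\alpha\in\Phi^+_{\Delta_1,\emptyset}\}$, since an element of $W$ inverts no root with positive $\delta$-coefficient. Comparing with part (1) yields $\Phi_{w_0^{W_{\Delta_1}}w_0}\subset\Phi_{t_{\beta}}$, i.e. $w_0^{W_{\Delta_1}}w_0$ is a left prefix of $t_{\beta}$; writing $t_{\beta}=(w_0^{W_{\Delta_1}}w_0)x$ defines $x$. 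The hypotheses of Lemma \ref{first} then hold for this $x$: hypothesis (1) is the straightness of $t_{\beta}=(w_0^{W_{\Delta_1}}w_0)x$ together with the prefix property just shown, and hypothesis (2) is $\Phi_{t_{\beta}^{\infty}}=\widehat{\Phi^+_{\Delta_1,\emptyset}}$ from part (1). Applying Lemma \ref{first} with this $x$ and with $u=u[w,\Delta_1]$ gives $\widehat{w\Phi^+_{\Delta_1,\emptyset}}=\Phi_{(u[w,\Delta_1](w_0^{W_{\Delta_1}}w_0)x(u[w,\Delta_1])^{-1})^{\infty}}$, which is the assertion.

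The main obstacle is the inversion-set computation for the translation, and in particular the verification that the negative-root family $\widehat{\{\alpha\}}$, $\alpha\in\Phi^-$, contributes nothing to $\Phi_{t_{\beta}^{\infty}}$; this is exactly where the distinguished hypothesis on $\beta$ is indispensable, since for a non-distinguished coroot the pairing $(\,\cdot\,,\beta)$ would be negative on some positive roots and $\Phi_{t_{\beta}^{\infty}}$ would acquire extra roots, breaking the clean equality. Everything else is bookkeeping with the conventions, namely the definition of $\Phi_w$ via $w^{-1}$ and the two-family description of $\widetilde{\Phi}^+$.
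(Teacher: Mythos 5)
Your proof is correct, and for part (2) it is the same argument as the paper's: you establish $\Phi_{w_0^{W_{\Delta_1}}w_0}=\Phi^+_{\Delta_1,\emptyset}$, deduce the prefix property from the inclusion in part (1), and then apply Lemma \ref{first} with $u=u[w,\Delta_1]$ and the $x$ defined by $t_\beta=(w_0^{W_{\Delta_1}}w_0)x$, checking its two hypotheses via straightness of translations and the equality from part (1). The genuine difference is in part (1): the paper disposes of it by citing Lemma 4.6 of \cite{orderpaper}, whereas you prove it from scratch by computing inversion sets of translations directly from $t_v(u)=u+(u,v)\delta$. Your computation is sound: the uniform criterion $(\alpha,\beta)\geq n+1$ for membership of either family of positive affine roots in $\Phi_{t_\beta}$ is correct, the distinguished hypothesis forces $(\alpha,\beta)=c_i(\beta_i,\beta)\geq 0$ on $\Phi^+$ with strict inequality exactly on $\Phi^+_{\Delta_1,\emptyset}$, hence the family lying over $\Phi^-$ never contributes, and letting $k\to\infty$ in $t_\beta^k=t_{k\beta}$ sweeps out exactly $\widehat{\Phi^+_{\Delta_1,\emptyset}}$. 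This buys a self-contained treatment (useful, since the cited lemma is in a preprint and is not reproduced here), at the cost of redoing a computation the author chose to outsource. Two small points you should make explicit: (i) the inclusion $\Phi^+_{\Delta_1,\emptyset}\subset\Phi_{t_\beta}$ at $n=0$ requires $(\alpha,\beta)\geq 1$ rather than merely $(\alpha,\beta)>0$, which holds because $\beta$ lies in the coroot lattice, so $(\alpha,\beta)\in\mathbb{Z}$ and positivity upgrades to $\geq 1$; (ii) the identity $\Phi_{t_\beta^\infty}=\bigcup_{k\geq 1}\Phi_{t_\beta^k}$ uses that inversion sets of prefixes of an infinite reduced word are nested increasing, so the cofinal family of prefixes $t_\beta^k$ already computes the full union. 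Neither is a gap, just bookkeeping worth recording.
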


\begin{proof}
(1) follows from Lemma 4.6 and its proof in \cite{orderpaper}.

(2) The first assertion follows from the fact $$\Phi_{w_0^{W_{S\backslash \{s_{\beta_i}\}}}w_0}=\Phi^+\backslash \Phi^+_{S\backslash \{s_{\beta_i}\}}=\Phi^+_{\Delta\backslash\{\beta_i\},\emptyset}$$ and $\Phi^+_{\Delta\backslash\{\beta_i\},\emptyset} \subset \Phi_{t_{\beta}}$ (by (1)).
 The other assertions follow directly from Lemma
\ref{first}.
\end{proof}

\emph{Proof of Theorem \ref{main}}. A minimal infinite reduced word of $\widetilde{W}$ has its inversion set of the form $\widehat{\Psi^+_{\Pi_1,\emptyset}}$ where $|\Pi_1|$ equals $\mathrm{rank}(W)-1$ by Theorem \ref{thmaffineminimal}. We have that $\widehat{\Psi^+_{\Pi_1,\emptyset}}=\widehat{w\Phi^+_{\Delta_1,\emptyset}}$ for some $w\in W$ and $\Delta_1\subset \Delta, |\Delta_1|=\mathrm{rank}(W)-1.$ Thanks to Lemma \ref{two} (2), we only need to verify that for each of the expressions of the form $(\underline{v}\underline{y}\underline{u})^{\infty}$ in the Theorem, $\underline{uvy}$ is a reduced expression for some distinguished ``translation''. We leave the detail verification in the following sections. $\Box$

\section{Reduced expression of distinguished ``translations''}\label{translation}

From the previous section, we have shown that the proof  of Theorem \ref{main} boils down to finding a reduced expression of some distinguished ``translations". In this section we will describe these reduced expressions explicitly. Due to their length, we will only present the full detail for type $\widetilde{C_n}$ and omit the (virtually same but long) verification arguments for other types.

For the realization of the root system,  we roughly follow 18.14 of \cite{ty} with rescaling in some cases. When writing a positive root $\beta$  as $\sum k_i\beta_i$ where $\beta_i$ is simple, we define the support of $\beta$ to be the set of simple root $\beta_j$ such that $k_j>0$.

\subsection{$C_n$}

The simple roots are $\beta_1,\beta_2,\cdots, \beta_n$ and they are numbered as in Section \ref{SectMain}. Let $\epsilon_1, \epsilon_2,\cdots, \epsilon_n$ be the standard basis of $\mathbb{R}^n$. Then simple roots can be realized as follow: $\beta_1=\frac{\epsilon_1-\epsilon_2}{\sqrt{2}}, \beta_2=\frac{\epsilon_2-\epsilon_3}{\sqrt{2}}, \cdots \beta_{n-1}=\frac{\epsilon_{n-1}-\epsilon_n}{\sqrt{2}}, \beta_n=\sqrt{2}\epsilon_n.$ Therefore the coroots are $2\beta_1,2\beta_2,\cdots, 2\beta_{n-1}, \beta_n.$ The positive roots are:
$2(\beta_i+\beta_{i+1}+\cdots+\beta_{n-1})+\beta_n, 1\leq i\leq n, \beta_i+\beta_{i+1}+\cdots+\beta_{j-1}, 1\leq i<j\leq n, \beta_i+\beta_{i+1}+\cdots+\beta_{j-1}+2(\beta_j+\beta_{j+1}+\cdots+\beta_{n-1})+\beta_n, 1\leq i<j\leq n$. The highest root is $2(\beta_1+\beta_2+\cdots+\beta_{n-1})+\beta_n$.

\begin{lemma}
(1) A set of $i$-th distinguished ``translations'' $1\leq i\leq n$ is given as follow.$$t_{2\beta_1+4\beta_2+\cdots+2(i-1)\beta_{i-1}+2i\beta_i+2i\beta_{i+1}+\cdots+2i\beta_{n-1}+i\beta_n}$$
$$=w_0^{W_{S\backslash \{s_{\beta_i}\}}}w_0s_{\delta-\rho}(s_{\beta_1}s_{\delta-\rho})(s_{\beta_2}s_{\beta_1}s_{\delta-\rho})\cdots(s_{\beta_{i-1}}s_{\beta_{i-2}}\cdots s_{\beta_1}s_{\delta-\rho})$$

(2) Replace $w_0^{W_{S\backslash \{s_{\beta_i}\}}}w_0$ with a reduced expression of it, the expressions in (1) give  reduced expressions of these ``translations''.
\end{lemma}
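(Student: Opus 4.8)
The plan is to turn both assertions into explicit computations inside $\widetilde W=W\ltimes T$ in the coordinate realization. First I would rewrite the displayed coroot, call it $\beta^{(i)}$, in the $\epsilon$-basis: collecting the coefficients $2,4,\dots,2(i-1),2i,\dots,2i,i$ telescopes to $\beta^{(i)}=\sqrt2(\epsilon_1+\cdots+\epsilon_i)$. From this form a one-line check gives $(\beta_j,\beta^{(i)})=0$ for $j\ne i$ and $(\beta_i,\beta^{(i)})>0$, so $\beta^{(i)}$ is $i$-th distinguished, and since the coefficient of each $\beta_j$ ($j<n$) is even and that of $\beta_n$ is the integer $i$, it lies in the coroot lattice. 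I would also record the relevant elements as signed permutations of $\epsilon_1,\dots,\epsilon_n$: $s_{\beta_j}$ swaps $\epsilon_j\leftrightarrow\epsilon_{j+1}$ for $j<n$, while $s_\rho$ negates $\epsilon_1$ (the highest root $\rho=\sqrt2\epsilon_1$ is long, so $\rho^\vee=\rho$), and $c_i:=w_0^{W_{S\backslash \{s_{\beta_i}\}}}w_0$ acts by $\epsilon_l\mapsto-\epsilon_{i+1-l}$ for $l\le i$ and fixes $\epsilon_l$ for $l>i$; in particular $c_i$ is an involution, which one reads off from $w_0=-\mathrm{id}$ and the longest elements of the two factors $A_{i-1}\times C_{n-i}$ of $W_{S\backslash \{s_{\beta_i}\}}$.

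The heart is the group identity $c_iP_i=t_{\beta^{(i)}}$, where $P_i:=s_{\delta-\rho}(s_{\beta_1}s_{\delta-\rho})\cdots(s_{\beta_{i-1}}\cdots s_{\beta_1}s_{\delta-\rho})$ is the affine tail of the claimed word. I would establish it by induction on $i$, the one nontrivial input being the relation $s_{\delta-\rho}=t_{-\rho^\vee}s_\rho$, checked once by evaluating both sides on a general $u=v+c\delta$. Writing $P_i=t_{\mu_i}w_i$ with $w_i\in W$ and $\mu_i$ in the coroot lattice, the factorization $P_i=P_{i-1}(s_{\beta_{i-1}}\cdots s_{\beta_1}s_{\delta-\rho})$ and the commutation $wt_\nu=t_{w(\nu)}w$ yield the recursions $w_i=w_{i-1}s_{\beta_{i-1}}\cdots s_{\beta_1}s_\rho$ and $\mu_i=\mu_{i-1}-w_{i-1}\bigl((s_{\beta_{i-1}}\cdots s_{\beta_1})(\rho^\vee)\bigr)$, with base case $w_1=s_\rho$, $\mu_1=-\rho^\vee=-\beta^{(1)}$. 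A direct signed-permutation computation shows $(s_{\beta_{i-1}}\cdots s_{\beta_1})(\rho^\vee)=\sqrt2\epsilon_i$, which $w_{i-1}$ fixes, so $\mu_i=\mu_{i-1}-\sqrt2\epsilon_i=-\beta^{(i)}$; the same computation shows the recursion carries the signed-permutation pattern of $c_{i-1}$ to that of $c_i$, i.e. $w_i=c_i$. As $c_i$ is an involution, $c_iw_i=e$ and $c_i(\mu_i)=\beta^{(i)}$, hence $c_iP_i=t_{c_i(\mu_i)}c_iw_i=t_{\beta^{(i)}}$. This is part (1).

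For part (2) I would use a length count. Expanding $c_i$ reducedly, the claimed word has $\ell(c_i)+\tfrac{i(i+1)}2$ letters, where $\ell(c_i)=|\Phi^+|-|\Phi^+_{S\backslash \{s_{\beta_i}\}}|=n^2-\tfrac{i(i-1)}2-(n-i)^2$; the total is $i(2n-i+1)$. Independently, the translation length formula $\ell(t_{\beta^{(i)}})=\sum_{\alpha\in\Phi^+}|(\beta^{(i)},\alpha)|$, evaluated by splitting $\Phi^+$ into the long roots $\sqrt2\epsilon_j$ and the short roots $\tfrac{\epsilon_j\pm\epsilon_k}{\sqrt2}$, also gives $i(2n-i+1)$. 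Since a word whose letter count equals the Coxeter length of the element it represents is reduced, part (1) together with this count gives part (2).

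The main obstacle is bookkeeping rather than any essential difficulty: one must propagate the semidirect-product decomposition $P_i=t_{\mu_i}w_i$ correctly through the induction, and the argument is genuinely sensitive to conventions---above all to obtaining $s_{\delta-\rho}=t_{-\rho^\vee}s_\rho$ with the correct sign and to the composition order in $s_{\beta_{i-1}}\cdots s_{\beta_1}$. The only type-specific inputs are the explicit signed-permutation actions and the shape of $c_i$; once these are fixed the induction closes uniformly in $i$, and the extreme cases $i=1$ (where $P_1=s_{\delta-\rho}$, $c_1=s_\rho$, and $t_{\beta^{(1)}}=s_\rho s_{\delta-\rho}$) and $i=n$ are covered by the same length computation.
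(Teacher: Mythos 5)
Your proof is correct; it differs from the paper's mainly in part (1). The paper proves the identity by direct computation in the reflection representation: it records how $w_0^{W_{S\backslash\{s_{\beta_i}\}}}w_0$ acts on each simple root, pushes each $\beta_j$ through the affine tail factor by factor, compares the result with the action of the translation (which fixes $\beta_j$ for $j\neq i$ and sends $\beta_i\mapsto\beta_i+\delta$, resp.\ $\beta_n\mapsto\beta_n+2\delta$ when $i=n$), and concludes by faithfulness of the representation on $V'$. You instead work in the semidirect product $\widetilde W=W\ltimes T$: the single conversion $s_{\delta-\rho}=t_{-\rho^\vee}s_\rho$, combined with $wt_\nu=t_{w(\nu)}w$, splits the affine tail $P_i$ as $t_{\mu_i}w_i$, and your induction telescopes $\mu_i$ to $-\beta^{(i)}$ while identifying $w_i$ with the involution $c_i=w_0^{W_{S\backslash\{s_{\beta_i}\}}}w_0$, whence $c_iP_i=t_{c_i(\mu_i)}=t_{\beta^{(i)}}$. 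Your route is more structural: it isolates the only genuinely affine input in one relation, explains where the coroot $\sqrt2(\epsilon_1+\cdots+\epsilon_i)$ comes from (a telescoping sum of $W$-translates of $\rho^\vee$), and replaces $n$ root-by-root checks for each $i$ by one uniform induction; the cost is the semidirect-product bookkeeping, which is precisely where sign and composition-order conventions could bite, as you correctly flag. For part (2) the two arguments coincide in substance: both compare the letter count $\ell(c_i)+\tfrac{i(i+1)}{2}=2ni-i^2+i$ with $\ell(t_{\beta^{(i)}})$ --- you evaluate the latter via the translation-length sum $\sum_{\alpha\in\Phi^+}\lvert(\beta^{(i)},\alpha)\rvert$ in $\epsilon$-coordinates, the paper by counting positive roots according to their $\beta_i$-coefficient --- and both conclude reducedness because the word realizes the element in exactly $\ell(t_{\beta^{(i)}})$ letters.
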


\begin{proof}
(1) One easily checks that the ``translations'' in the lemma are distinguished.

It is well-known from the action of the longest element on a simple root that
$$w_0^{W_{S\backslash \{s_{\beta_i}\}}}w_0(\beta_1)=\beta_{i-1},$$
$$w_0^{W_{S\backslash \{s_{\beta_i}\}}}w_0(\beta_2)=\beta_{i-2},$$
$$\cdots$$
$$w_0^{W_{S\backslash \{s_{\beta_i}\}}}w_0(\beta_{i-1})=\beta_{1},$$
$$w_0^{W_{S\backslash \{s_{\beta_i}\}}}w_0(\beta_{i})=-\beta_1-\beta_2-\cdots-\beta_i-2\beta_{i+1}-2\beta_{i+2}-\cdots-2\beta_{n-1}-\beta_n,\text{if}\,i\neq n,$$$$w_0^{W_{S\backslash \{s_{\beta_i}\}}}w_0(\beta_{i})=-2\beta_1-2\beta_2-\cdots-2\beta_{n-1}-\beta_n,\text{if}\,i=n,$$
$$w_0^{W_{S\backslash \{s_{\beta_i}\}}}w_0(\beta_{i+1})=\beta_{i+1},$$
$$\cdots$$
$$w_0^{W_{S\backslash \{s_{\beta_i}\}}}w_0(\beta_{n})=\beta_{n}.$$
Now we check the identity by acting the left hand side and the right hand side on simple roots. For $j<i$,
$$s_{\beta_{i-1}}s_{\beta_{i-2}}\cdots s_{\beta_1}s_{\delta-\rho}(\beta_j)=\beta_{j-1},$$
$$s_{\beta_{i-2}}s_{\beta_{i-3}}\cdots s_{\beta_1}s_{\delta-\rho}(\beta_{j-1})=\beta_{j-2},$$
$$\cdots$$
$$s_{\beta_{i-(j-1)}}s_{\beta_{i-j}}\cdots s_{\beta_1}s_{\delta-\rho}(\beta_{2})=\beta_{1},$$
$$s_{\beta_{i-j}}s_{\beta_{i-j-1}}\cdots s_{\beta_1}s_{\delta-\rho}(\beta_1)=\delta-\beta_1-\cdots-\beta_{i-j}-2\beta_{i-j+1}-\cdots-2\beta_{n-1}-\beta_n,$$
$$s_{\beta_{i-j-1}}\cdots s_{\beta_1}s_{\delta-\rho}(\delta-\beta_1-\cdots-\beta_{i-j}-2\beta_{i-j+1}-\cdots-2\beta_{n-1}-\beta_n)=\beta_{i-j},$$
$$s_{\beta_{i-j-2}}\cdots s_{\beta_1}s_{\delta-\rho}(\beta_{i-j})=\beta_{i-j},$$
$$\cdots$$
$$s_{\delta-\rho}(\beta_{i-j})=\beta_{i-j}.$$
$$w_0^{W_{S\backslash \{s_{\beta_n}\}}}w_0(\beta_{i-j})=\beta_j.$$
For $j>i$ one easily sees that $$w_0^{W_{S\backslash \{s_{\beta_i}\}}}w_0s_{\delta-\rho}(s_{\beta_1}s_{\delta-\rho})(s_{\beta_2}s_{\beta_1}s_{\delta-\rho})\cdots(s_{\beta_{i-1}}s_{\beta_{i-2}}\cdots s_{\beta_1}s_{\delta-\rho})(\beta_j)=\beta_j.$$

Suppose that $i\neq n$ we compute
$$s_{\delta-\rho}(s_{\beta_1}s_{\delta-\rho})(s_{\beta_2}s_{\beta_1}s_{\delta-\rho})\cdots(s_{\beta_{i-1}}s_{\beta_{i-2}}\cdots s_{\beta_1}s_{\delta-\rho})(\beta_i)$$
$$=\delta-\beta_1-\cdots-\beta_i-2\beta_{i+1}-\cdots-2\beta_{n-1}-\beta_n,$$
$$w_0^{W_{S\backslash \{s_{\beta_i}\}}}w_0(\delta-\beta_1-\cdots-\beta_i-2\beta_{i+1}-\cdots-2\beta_{n-1}-\beta_n)=\beta_i+\delta.$$
Now assume that $i=n$. We compute
$$s_{\delta-\rho}(s_{\beta_1}s_{\delta-\rho})(s_{\beta_2}s_{\beta_1}s_{\delta-\rho})\cdots(s_{\beta_{n-1}}s_{\beta_{i-2}}\cdots s_{\beta_1}s_{\delta-\rho})(\beta_n)$$
$$=2\delta-2\beta_1-\cdots-2\beta_{n-1}-\beta_n,$$
$$w_0^{W_{S\backslash \{s_{\beta_n}\}}}w_0(2\delta-2\beta_1-\cdots-2\beta_{n-1}-\beta_n)=\beta_n+2\delta.$$

On the other hand, $$t_{2\beta_1+4\beta_2+\cdots+2(i-1)\beta_{i-1}+2i\beta_i+2i\beta_{i+1}+\cdots+2i\beta_{n-1}+i\beta_n}(\beta_j)=\beta_j, j\neq i,$$ $$t_{2\beta_1+4\beta_2+\cdots+2(i-1)\beta_{i-1}+2i\beta_i+2i\beta_{i+1}+\cdots+2i\beta_{n-1}+i\beta_n}(\beta_i)=\beta_i+\delta, \text{if}\, i\neq n$$
$$t_{2\beta_1+4\beta_2+\cdots+2(n-1)\beta_{n-1}+n\beta_n}(\beta_n)=\beta_n+2\delta.$$
Therefore combining the calculation and the faithfulness of the reflection representation, we have proved (1).

(2) Let $i<n$. There are $n^2-\frac{i(i-1)}{2}-(n-i)^2$ positive roots whose support containing $\beta_i$. Among them $i+\frac{i(i-1)}{2}$ of them has the coefficient of $\beta_i$ being 2 ($i$ of them are of the form $2(\beta_k+\cdots+\beta_n)+\beta_n$ and $\frac{i(i-1)}{2}$ of them are of the form $\beta_k+\cdots+\beta_{j-1}+2(\beta_j+\cdots+\beta_{n-1})+\beta_n$) and others have the coefficient of $\beta_i$ being 1.  Hence $$|\Phi_{t_{2\beta_1+4\beta_2+\cdots+2(i-1)\beta_{i-1}+2i\beta_i+2i\beta_{i+1}+\cdots+2i\beta_{n-1}+i\beta_n}}|$$
$$=(n^2-\frac{i(i-1)}{2}-(n-i)^2)-(i+\frac{i(i-1)}{2})+2(i+\frac{i(i-1)}{2})=-i^2+i+2ni.$$

Let $i=n$. There are $n^2-\frac{n(n-1)}{2}$ positive roots whose support containing $\beta_i$ ($\beta_n$). All of them have the coefficient of $\beta_n$ being 1. Hence $$|\Phi_{t_{2\beta_1+4\beta_2+\cdots+2(n-1)\beta_{n-1}+n\beta_n}}|$$
$$=2(n^2-\frac{n(n-1)}{2})=n^2+n=-i^2+i+2ni.$$

Then
$$\ell(w_0^{W_{S\backslash \{s_{\beta_i}\}}}w_0s_{\delta-\rho}(s_{\beta_1}s_{\delta-\rho})(s_{\beta_2}s_{\beta_1}s_{\delta-\rho})\cdots(s_{\beta_{i-1}}s_{\beta_{i-2}}\cdots s_{\beta_1}s_{\delta-\rho}))$$
$$\leq \ell(w_0^{W_{S\backslash \{s_{\beta_i}\}}}w_0)+\ell(s_{\delta-\rho}(s_{\beta_1}s_{\delta-\rho})(s_{\beta_2}s_{\beta_1}s_{\delta-\rho})\cdots(s_{\beta_{i-1}}s_{\beta_{i-2}}\cdots s_{\beta_1}s_{\delta-\rho}))$$
$$=(n^2-\frac{i(i-1)}{2}-(n-i)^2)+(1+2+\cdots+i)=-i^2+i+2ni$$
$$=\ell(t_{2\beta_1+4\beta_2+\cdots+2(i-1)\beta_{i-1}+2i\beta_i+2i\beta_{i+1}+\cdots+2i\beta_{n-1}+i\beta_n})$$
But by (1) the inequality is indeed equality.
\end{proof}

\subsection{$B_n$}

The simple roots are $\beta_1,\beta_2,\cdots, \beta_n$ and they are numbered as in Section \ref{SectMain}. Let $\epsilon_1, \epsilon_2,\cdots, \epsilon_n$ be the standard basis of $\mathbb{R}^n$. Then simple roots can be realized as follow: $\beta_1=\epsilon_1-\epsilon_2, \beta_2=\epsilon_2-\epsilon_3, \cdots \beta_{n-1}=\epsilon_{n-1}-\epsilon_n, \beta_n=\epsilon_n.$ Therefore the coroots are $\beta_1,\beta_2,\cdots, \beta_{n-1}, 2\beta_n.$ The positive roots are:
$\beta_i+\beta_{i+1}+\cdots+\beta_n, 1\leq i\leq n, \beta_i+\beta_{i+1}+\cdots+\beta_{j-1}, 1\leq i<j\leq n, \beta_i+\beta_{i+1}+\cdots+\beta_{j-1}+2(\beta_j+\beta_{j+1}+\cdots+\beta_n), 1\leq i<j\leq n$. The highest root is $\beta_1+2(\beta_2+\beta_3+\cdots+\beta_n)$.

\begin{lemma}
(1) A set of $i$-th distinguished ``translations'' $1\leq i\leq n$ is given as follow.

When $i$ is even,
$$t_{\beta_1+2\beta_2+\cdots+i\beta_i+i\beta_{i+1}+\cdots i\beta_n}$$
$$=w_0^{W_{S\backslash \{s_{\beta_i}\}}}w_0s_{\delta-\rho}((s_{\beta_2}s_{\beta_1}s_{\beta_3}s_{\beta_2})(s_{\beta_4}s_{\beta_3}s_{\beta_5}s_{\beta_4})\cdots (s_{\beta_{i-2}}s_{\beta_{i-3}}s_{\beta_{i-1}}s_{\beta_{i-2}})s_{\delta-\rho})$$
$$\cdots((s_{\beta_2}s_{\beta_1}s_{\beta_3}s_{\beta_2})(s_{\beta_4}s_{\beta_3}s_{\beta_5}s_{\beta_4})s_{\delta-\rho}))((s_{\beta_2}s_{\beta_1}s_{\beta_3}s_{\beta_2})s_{\delta-\rho}))$$

When $i$ is odd and $i\geq 3$
$$t_{2\beta_1+4\beta_2+\cdots+2i\beta_i+2i\beta_{i+1}+\cdots 2i\beta_n}$$
$$=w_0^{W_{S\backslash \{s_{\beta_i}\}}}w_0(s_{\delta-\rho}(s_{\beta_2}s_{\beta_3}\cdots s_{\beta_n}s_{\beta_{n-1}}\cdots s_{\beta_i}s_{\beta_1}s_{\beta_2}\cdots s_{\beta_n}s_{\beta_{n-1}}\cdots s_{\beta_i}))^{\frac{i-1}{2}}$$
$$(s_{\delta-\rho}s_{\beta_2}s_{\beta_3}\cdots s_{\beta_n}s_{\beta_{n-1}}\cdots s_{\beta_i}s_{\beta_1}s_{\beta_2}\cdots s_{\beta_{i-1}})$$
$$(s_{\delta-\rho}(s_{\beta_2}s_{\beta_1}s_{\beta_3}s_{\beta_2})(s_{\beta_4}s_{\beta_3}s_{\beta_5}s_{\beta_4})\cdots (s_{\beta_{i-3}}s_{\beta_{i-4}}s_{\beta_{i-2}}s_{\beta_{i-3}}))$$
$$\cdots (s_{\delta-\rho}(s_{\beta_2}s_{\beta_1}s_{\beta_3}s_{\beta_2})(s_{\beta_4}s_{\beta_3}s_{\beta_5}s_{\beta_4}))(s_{\delta-\rho}(s_{\beta_2}s_{\beta_1}s_{\beta_3}s_{\beta_2}))s_{\delta-\rho}.$$

$$t_{2\beta_1+2\beta_2+\cdots+2\beta_n}$$
$$=w_0^{W_{S\backslash \{s_{\beta_1}\}}}w_0s_{\delta-\rho}s_{\beta_2}s_{\beta_3}\cdots s_{\beta_n}\cdots s_{\beta_3}s_{\beta_2}s_{\delta-\rho}$$

(2) Replace $w_0^{W_{S\backslash \{s_{\beta_i}\}}}w_0$ with a reduced expression of it, the expressions in (1) give  reduced expressions of these ``translations''.
\end{lemma}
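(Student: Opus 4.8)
The plan is to follow closely the strategy used above for type $\widetilde{C_n}$: prove the identity in part (1) by applying both sides to each simple root and invoking the faithfulness of the reflection representation of $\widetilde{W}$, and then derive part (2) from a length count.

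First I would confirm that each subscript coroot is $i$-th distinguished, i.e. that it is orthogonal to every $\beta_j$ with $j\neq i$ while pairing positively with $\beta_i$. In the realization $\beta_k=\epsilon_k-\epsilon_{k+1}$ ($k<n$), $\beta_n=\epsilon_n$ this is a short inner-product computation. The doubling between the even-$i$ subscript $\beta_1+2\beta_2+\cdots+i\beta_i+i\beta_{i+1}+\cdots+i\beta_n$ and the odd-$i$ subscript $2\beta_1+4\beta_2+\cdots+2i\beta_i+\cdots+2i\beta_n$ is dictated by the requirement that a distinguished coroot lie in the coroot lattice spanned by $\beta_1,\ldots,\beta_{n-1},2\beta_n$; this parity constraint on the short root $\beta_n$ is exactly what forces the statement to branch into the even, odd, and $i=1$ cases.

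Next I would record the action of $w_0^{W_{S\backslash\{s_{\beta_i}\}}}w_0$ on the simple roots. Since $w_0=-\mathrm{id}$ in type $B_n$, this element admits an explicit description on $\{\beta_j\}$: it reverses $\beta_1,\ldots,\beta_{i-1}$, sends $\beta_i$ to a specific negative root supported on the whole diagram, and fixes $\beta_{i+1},\ldots,\beta_n$. With these formulas I would apply the claimed equalities to each $\beta_j$ and to the relevant $\delta$-shifted roots. The braided blocks $(s_{\beta_2}s_{\beta_1}s_{\beta_3}s_{\beta_2})(s_{\beta_4}s_{\beta_3}s_{\beta_5}s_{\beta_4})\cdots$, interleaved with copies of $s_{\delta-\rho}$, are engineered so that each block advances the computation by one layer of the affine action; verifying this layer-by-layer propagation is the computational core of (1).

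For part (2) I would compute $|\Phi_{t_\beta}|$ straight from the positive-root list of $B_n$, sorting roots by whether the coefficient of $\beta_i$ equals $1$ or $2$, just as in the $\widetilde{C_n}$ length computation. Matching this against the number of letters in the proposed word yields $\ell(\mathrm{word})\leq \ell(w_0^{W_{S\backslash\{s_{\beta_i}\}}}w_0)+(\text{length of the remaining factors})=\ell(t_\beta)$, and part (1) then forces equality, so the expression is reduced. The main obstacle will be the odd-$i$ case: unlike $C_n$, the word is not a single telescoping product but a power $(s_{\delta-\rho}(s_{\beta_2}\cdots s_{\beta_n}s_{\beta_{n-1}}\cdots s_{\beta_i}s_{\beta_1}\cdots s_{\beta_i}))^{\frac{i-1}{2}}$ followed by a tail of braided blocks, so tracking the contribution of each segment to the inversion set, and keeping the $\beta_n$-coefficient consistent throughout both parities, demands careful bookkeeping.
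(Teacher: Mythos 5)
Your proposal is correct and takes essentially the same approach as the paper: the paper proves the analogous $\widetilde{C}_n$ lemma by checking the claimed identity on each simple root and invoking faithfulness of the reflection representation, then deduces reducedness by comparing the word length with $|\Phi_{t_\beta}|$ computed from the weighted count of positive roots supported on $\beta_i$, and it explicitly states that the $B_n$ verification is ``virtually the same.'' Your extra observation that the even/odd/$i=1$ branching in the coroot subscripts is forced by the parity constraint of the $B_n$ coroot lattice (where the short root contributes $2\beta_n$) is a correct and welcome clarification of why the statement splits into cases.
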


\subsection{$D_n$}

The simple roots are $\beta_1,\beta_2,\cdots, \beta_n$ and they are numbered as in Section \ref{SectMain}. Let $\epsilon_1, \epsilon_2,\cdots, \epsilon_n$ be the standard basis of $\mathbb{R}^n$. Then simple roots can be realized as follow: $\beta_1=\frac{\epsilon_1-\epsilon_2}{\sqrt{2}}, \beta_2=\frac{\epsilon_2-\epsilon_3}{\sqrt{2}}, \cdots \beta_{n-1}=\frac{\epsilon_{n-1}-\epsilon_n}{\sqrt{2}}, \beta_n=\frac{\epsilon_{n-1}+\epsilon_n}{\sqrt{2}}.$ Therefore the coroots are $2\beta_1,2\beta_2,\cdots, 2\beta_{n-1}, 2\beta_n.$ The positive roots are:
$\beta_i+\beta_{i+1}+\cdots+\beta_{j-1}, 1\leq i<j\leq n, \beta_i+\beta_{i+1}+\cdots+\beta_{n-2}+\beta_n, 1\leq i<n, \beta_i+\beta_{i+1}+\cdots+\beta_{j-1}+2(\beta_j+\beta_{j+1}+\cdots+\beta_{n-2})+\beta_{n-1}+\beta_n, 1\leq i<j<n$. The highest root is $\beta_1+2(\beta_2+\beta_3+\cdots+\beta_{n-2})+\beta_{n-1}+\beta_n$.

\begin{lemma}
(1) A set of $i$-th distinguished ``translations'' $1\leq i\leq n$ is given as follow.
$$t_{4\beta_1+4\beta_2+\cdots+4\beta_{n-2}+2\beta_{n-1}+2\beta_n}$$
$$=w_0^{W_{S\backslash \{s_{\beta_1}\}}}w_0s_{\delta-\rho}s_{\beta_2}s_{\beta_3}\cdots s_{\beta_{n-2}}s_{\beta_{n-1}}s_{\beta_{n}}s_{\beta_{n-2}}\cdots s_{\beta_3}s_{\beta_2}s_{\delta-\rho}$$

Let $i$ be even and $i\leq n-2$.
$$t_{2\beta_1+4\beta_2+\cdots+2i\beta_i+2i\beta_{i+1}+\cdots+2i\beta_{n-2}+i\beta_{n-1}+i\beta_n}$$
$$=w_0^{W_{S\backslash \{s_{\beta_i}\}}}w_0s_{\delta-\rho}((s_{\beta_2}s_{\beta_1}s_{\beta_3}s_{\beta_2})(s_{\beta_4}s_{\beta_3}s_{\beta_5}s_{\beta_4})\cdots (s_{\beta_{i-2}}s_{\beta_{i-3}}s_{\beta_{i-1}}s_{\beta_{i-2}})s_{\delta-\rho})$$
$$\cdots((s_{\beta_2}s_{\beta_1}s_{\beta_3}s_{\beta_2})(s_{\beta_4}s_{\beta_3}s_{\beta_5}s_{\beta_4})s_{\delta-\rho}))((s_{\beta_2}s_{\beta_1}s_{\beta_3}s_{\beta_2})s_{\delta-\rho}))$$

Let $i$ be odd and $1<i\leq n-2$
$$t_{4\beta_1+8\beta_2+\cdots+4i\beta_i+4i\beta_{i+1}+\cdots 4i\beta_{n-2}+2i\beta_{n-1}+2i\beta_n}=$$
$$w_0^{W_{S\backslash \{s_{\beta_i}\}}}w_0s_{\delta-\rho}(s_{\beta_2}s_{\beta_3}\cdots s_{\beta_n}s_{\beta_1}s_{\beta_2}\cdots s_{\beta_n}(s_{\beta_{n-3}}s_{\beta_{n-2}})(s_{\beta_{n-4}}s_{\beta_{n-3}})\cdots$$ $$(s_{\beta_{i-1}}s_{\beta_i})s_{\delta-\rho})^{\frac{i-1}{2}}$$
$$(s_{\beta_2}s_{\beta_3}\cdots s_{\beta_n}s_{\beta_{n-2}}s_{\beta_{n-3}}\cdots s_{\beta_i}s_{\beta_1}s_{\beta_2}\cdots s_{\beta_{i-1}}s_{\delta-\rho})$$
$$((s_{\beta_2}s_{\beta_1}s_{\beta_3}s_{\beta_2})(s_{\beta_4}s_{\beta_3}s_{\beta_5}s_{\beta_4})\cdots (s_{\beta_{i-3}}s_{\beta_{i-4}}s_{\beta_{i-2}}s_{\beta_{i-3}})s_{\delta-\rho})$$
$$\cdots((s_{\beta_2}s_{\beta_1}s_{\beta_3}s_{\beta_2})(s_{\beta_4}s_{\beta_3}s_{\beta_5}s_{\beta_4})s_{\delta-\rho}))((s_{\beta_2}s_{\beta_1}s_{\beta_3}s_{\beta_2})s_{\delta-\rho}))$$

For $n$ even
$$t_{2\beta_1+4\beta_2+\cdots+ 2(n-2)\beta_{n-2}+n\beta_{n-1}+(n-2)\beta_n}=$$
$$w_0^{W_{S\backslash \{s_{\beta_{n-1}}\}}}w_0s_{\delta-\rho}s_{\beta_2}\cdots s_{\beta_{n-2}}s_{\beta_n}s_{\beta_1}s_{\beta_2}\cdots s_{\beta_{n-2}}s_{\delta-\rho}$$
$$((s_{\beta_2}s_{\beta_1}s_{\beta_3}s_{\beta_2})(s_{\beta_4}s_{\beta_3}s_{\beta_5}s_{\beta_4})\cdots (s_{\beta_{n-4}}s_{\beta_{n-5}}s_{\beta_{n-3}}s_{\beta_{n-4}})s_{\delta-\rho})$$
$$\cdots((s_{\beta_2}s_{\beta_1}s_{\beta_3}s_{\beta_2})(s_{\beta_4}s_{\beta_3}s_{\beta_5}s_{\beta_4})s_{\delta-\rho}))((s_{\beta_2}s_{\beta_1}s_{\beta_3}s_{\beta_2})s_{\delta-\rho}))$$

$$t_{2\beta_1+4\beta_2+\cdots+ 2(n-2)\beta_{n-2}+(n-2)\beta_{n-1}+n\beta_n}=$$
$$w_0^{W_{S\backslash \{s_{\beta_{n}}\}}}w_0s_{\delta-\rho}s_{\beta_2}\cdots s_{\beta_{n-2}}s_{\beta_{n-1}}s_{\beta_1}s_{\beta_2}\cdots s_{\beta_{n-2}}s_{\delta-\rho}$$
$$((s_{\beta_2}s_{\beta_1}s_{\beta_3}s_{\beta_2})(s_{\beta_4}s_{\beta_3}s_{\beta_5}s_{\beta_4})\cdots (s_{\beta_{n-4}}s_{\beta_{n-5}}s_{\beta_{n-3}}s_{\beta_{n-4}})s_{\delta-\rho})$$
$$\cdots((s_{\beta_2}s_{\beta_1}s_{\beta_3}s_{\beta_2})(s_{\beta_4}s_{\beta_3}s_{\beta_5}s_{\beta_4})s_{\delta-\rho}))((s_{\beta_2}s_{\beta_1}s_{\beta_3}s_{\beta_2})s_{\delta-\rho}))$$

For $n$ odd
$$t_{4\beta_1+8\beta_2+\cdots+(4n-8)\beta_{n-2}+2n\beta_{n-1}+(2n-4)\beta_{n}}=$$
$$w_0^{W_{S\backslash \{s_{\beta_{n-1}}\}}}w_0s_{\delta-\rho}(s_{\beta_2}s_{\beta_3}\cdots s_{\beta_{n-1}}s_{\beta_1}s_{\beta_2}\cdots s_{\beta_{n-2}}s_{\beta_n}s_{\delta-\rho})^{\frac{n+1}{2}}$$
$$((s_{\beta_2}s_{\beta_1}s_{\beta_3}s_{\beta_2})(s_{\beta_4}s_{\beta_3}s_{\beta_5}s_{\beta_4})\cdots (s_{\beta_{n-3}}s_{\beta_{n-4}}s_{\beta_{n-2}}s_{\beta_{n-3}})s_{\delta-\rho})$$
$$\cdots((s_{\beta_2}s_{\beta_1}s_{\beta_3}s_{\beta_2})(s_{\beta_4}s_{\beta_3}s_{\beta_5}s_{\beta_4})s_{\delta-\rho}))((s_{\beta_2}s_{\beta_1}s_{\beta_3}s_{\beta_2})s_{\delta-\rho}))$$

$$t_{4\beta_1+8\beta_2+\cdots+(4n-8)\beta_{n-2}+(2n-4)\beta_{n-1}+2n\beta_{n}}=$$
$$w_0^{W_{S\backslash \{s_{\beta_{n}}\}}}w_0s_{\delta-\rho}(s_{\beta_2}s_{\beta_3}\cdots s_{\beta_{n-2}}s_{\beta_{n}}s_{\beta_1}s_{\beta_2}\cdots s_{\beta_{n-2}}s_{\beta_{n-1}}s_{\delta-\rho})^{\frac{n+1}{2}}$$
$$((s_{\beta_2}s_{\beta_1}s_{\beta_3}s_{\beta_2})(s_{\beta_4}s_{\beta_3}s_{\beta_5}s_{\beta_4})\cdots (s_{\beta_{n-3}}s_{\beta_{n-4}}s_{\beta_{n-2}}s_{\beta_{n-3}})s_{\delta-\rho})$$
$$\cdots((s_{\beta_2}s_{\beta_1}s_{\beta_3}s_{\beta_2})(s_{\beta_4}s_{\beta_3}s_{\beta_5}s_{\beta_4})s_{\delta-\rho}))((s_{\beta_2}s_{\beta_1}s_{\beta_3}s_{\beta_2})s_{\delta-\rho}))$$

(2) Replace $w_0^{W_{S\backslash \{s_{\beta_i}\}}}w_0$ with a reduced expression of it, the expressions in (1) give  reduced expressions of these ``translations''.
\end{lemma}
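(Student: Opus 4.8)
The plan is to follow verbatim the structure of the type $C_n$ lemma proved in full above, since the two arguments are identical in method and differ only in combinatorial bookkeeping. For part (1) there are two things to check in each case: that the displayed $t_\beta$ really is an $i$-th distinguished ``translation'', and that the asserted factorization holds. The first is an immediate inner-product computation in the realization $\beta_1=\tfrac{\epsilon_1-\epsilon_2}{\sqrt 2},\dots,\beta_{n-1}=\tfrac{\epsilon_{n-1}-\epsilon_n}{\sqrt 2},\beta_n=\tfrac{\epsilon_{n-1}+\epsilon_n}{\sqrt 2}$: for the coroot $\beta$ in each formula one confirms $(\beta_j,\beta)=0$ for $j\neq i$ and $(\beta_i,\beta)>0$.

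For the factorization I would use faithfulness of the action on $V'=V\oplus\mathbb{R}\delta$ and verify that both sides send every simple root to the same vector. The left side is trivial: since $\beta$ is $i$-th distinguished, $t_\beta$ fixes $\beta_j$ for $j\neq i$ and sends $\beta_i\mapsto\beta_i+(\beta_i,\beta)\delta$. The right side is evaluated by pushing each simple root through the explicit word one reflection at a time, exactly as in the $C_n$ computation, the essential mechanism being that $s_{\delta-\rho}$ lifts a low root by adding a multiple of $\delta$. The one new ingredient relative to $C_n$ is the action of $w_0^{W_{S\setminus\{s_{\beta_i}\}}}w_0$ on simple roots: in type $D_n$ the longest element equals $-1$ only for $n$ even, whereas for $n$ odd it induces the diagram automorphism interchanging $\beta_{n-1}$ and $\beta_n$. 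This parity dichotomy, combined with the fork at the end of the diagram, is precisely what produces the case split seen in the statement (even versus odd $i$; the separate rows for $i=1$, $i=n-1$, $i=n$; and the parity of $n$), and each case needs its own table of simple-root images.

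Granting (1), part (2) is a length count identical in spirit to the $C_n$ argument. I would compute $\ell(t_\beta)=|\Phi_{t_\beta}|$ by summing, over positive roots $\alpha$ of the finite system whose support contains $\beta_i$, the $\beta_i$-coefficient of $\alpha$, organized by whether that coefficient is $1$ or $2$. On the other side, $\Phi_{w_0^{W_{S\setminus\{s_{\beta_i}\}}}w_0}=\Phi^+_{\Delta\setminus\{\beta_i\},\emptyset}$ gives $\ell(w_0^{W_{S\setminus\{s_{\beta_i}\}}}w_0)=|\Phi^+|-|\Phi^+_{S\setminus\{s_{\beta_i}\}}|$, and the remaining explicit word contributes at most its number of letters. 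Checking that the resulting upper bound $\ell(\text{product})\leq \ell(w_0^{W_{S\setminus\{s_{\beta_i}\}}}w_0)+(\text{letter count})$ coincides with $\ell(t_\beta)$, and invoking (1) to identify the product with $t_\beta$, forces the inequality to be an equality, which is exactly the assertion that the displayed expression is reduced.

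The hard part will be the bookkeeping in the odd-$i$ and odd-$n$ cases, where the word carries a power $(\cdots)^{(i-1)/2}$ or $(\cdots)^{(n+1)/2}$ that sweeps across the whole diagram and back through the fork; tracking the image of each simple root through these long cyclic blocks, and independently getting the root count in $\Phi_{t_\beta}$ to match, is where essentially all the effort lies. Since these verifications are long and entirely parallel to the type $C_n$ case presented above, I would record them in the same tabular form and omit the most routine intermediate steps.
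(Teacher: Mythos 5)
Your proposal is correct and follows essentially the same route as the paper: the paper itself omits the type $D_n$ verification, declaring it ``virtually the same'' as the type $C_n$ case, and your outline --- checking distinguishedness by inner products in the $\epsilon$-realization, verifying the factorization by faithfulness of the action on simple roots of $V\oplus\mathbb{R}\delta$, and then deducing reducedness from the length count $\ell(t_\beta)\leq\ell(w_0^{W_{S\backslash\{s_{\beta_i}\}}}w_0)+(\text{letter count})=\ell(t_\beta)$ --- is exactly that $C_n$ argument transplanted. Your identification of the genuinely type-specific ingredient (the longest element acting as $-1$ only for $n$ even, versus the diagram automorphism swapping $\beta_{n-1}$ and $\beta_n$ for $n$ odd, which drives the case split) is also the right point to flag.
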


\subsection{$E_6$}

The simple roots are $\beta_1,\beta_2,\cdots, \beta_6$ and they are numbered as in Section \ref{SectMain}. Let $\epsilon_1, \epsilon_2,\cdots, \epsilon_8$ be the standard basis of $\mathbb{R}^8$. Then simple roots can be realized as follow: $\beta_1=\frac{\epsilon_1-\epsilon_2-\epsilon_3-\cdots-\epsilon_7+\epsilon_8}{2\sqrt{2}}, \beta_2=\frac{\epsilon_1+\epsilon_2}{\sqrt{2}}, \beta_3=\frac{\epsilon_2-\epsilon_1}{\sqrt{2}}, \beta_4=\frac{\epsilon_3-\epsilon_2}{\sqrt{2}}, \beta_5=\frac{\epsilon_4-\epsilon_3}{\sqrt{2}}, \beta_6=\frac{\epsilon_5-\epsilon_4}{\sqrt{2}}$. Therefore the coroots are $2\beta_1,2\beta_2,\cdots, 2\beta_6.$ The positive roots are:
$\frac{\pm\epsilon_i+\epsilon_j}{\sqrt{2}}, 1\leq i<j\leq 5, \frac{\epsilon_8-\epsilon_7-\epsilon_6+\sum_{i=1}^5(-1)^{\mu(i)}\epsilon_i}{2\sqrt{2}}$ where $\mu(i)\in \mathbb{Z}_{>0}$ and $\sum_{i=1}^5\mu(i)$ is even. The highest root is $\beta_1+2(\beta_2+\beta_3)+3\beta_{4}+2\beta_5+\beta_6$.

\begin{lemma}
(1) A set of $i$-th distinguished ``translations'' $1\leq i\leq 6$ is given as follow.
 $$t_{8\beta_1+6\beta_2+10\beta_3+12\beta_4+8\beta_5+4\beta_6}$$$$=w_0^{W_{S\backslash \{s_{\beta_{1}}\}}}w_0s_{\delta-\rho}(s_{\beta_2}s_{\beta_4}s_{\beta_5}s_{\beta_3}s_{\beta_4}s_{\beta_1}s_{\beta_2}s_{\beta_3}s_{\beta_4}s_{\beta_5} $$ $$s_{\beta_6}s_{\delta-\rho})^2s_{\beta_2}s_{\beta_4}s_{\beta_5}s_{\beta_3}s_{\beta_4}s_{\beta_2}s_{\delta-\rho}$$

$$t_{2\beta_1+4\beta_2+4\beta_3+6\beta_4+4\beta_5+2\beta_6}$$
$$=w_0^{W_{S\backslash \{s_{\beta_{2}}\}}}w_0s_{\delta-\rho}$$

$$t_{10\beta_1+12\beta_2+20\beta_3+24\beta_4+16\beta_5+8\beta_6}$$
$$=w_0^{W_{S\backslash \{s_{\beta_{3}}\}}}w_0s_{\delta-\rho}(s_{\beta_2}s_{\beta_4}s_{\beta_5}s_{\beta_6}s_{\beta_3}s_{\beta_4}s_{\beta_5}s_{\beta_2}s_{\beta_4}s_{\beta_1}s_{\beta_3}s_{\beta_4}s_{\beta_2}s_{\beta_5}s_{\beta_4}s_{\beta_6}s_{\beta_5}s_{\delta-\rho})^2$$
$$(s_{\beta_2}s_{\beta_4}s_{\beta_3}s_{\beta_5}s_{\beta_4}s_{\beta_2}s_{\beta_1}s_{\beta_3}s_{\beta_4}s_{\beta_6}s_{\beta_5}s_{\beta_4}s_{\beta_3}s_{\beta_2}s_{\beta_4}s_{\delta-\rho})$$
$$(s_{\beta_2}s_{\beta_4}s_{\beta_3}s_{\beta_5}s_{\beta_6}s_{\beta_4}s_{\beta_1}s_{\beta_5}s_{\beta_3}s_{\beta_4}s_{\beta_2}s_{\delta-\rho})$$

$$t_{4\beta_1+6\beta_2+8\beta_3+12\beta_4+8\beta_5+4\beta_6}$$
$$=w_0^{W_{S\backslash \{s_{\beta_{4}}\}}}w_0s_{\delta-\rho}s_{\beta_2}s_{\beta_4}s_{\beta_3}s_{\beta_5}s_{\beta_6}s_{\beta_4}s_{\beta_1}s_{\beta_5}s_{\beta_3}s_{\beta_4}s_{\beta_2}s_{\delta-\rho}$$

$$t_{8\beta_1+12\beta_2+16\beta_3+24\beta_4+20\beta_5+10\beta_6}$$
$$=w_0^{W_{S\backslash \{s_{\beta_{5}}\}}}w_0s_{\delta-\rho}(s_{\beta_2}s_{\beta_4}s_{\beta_3}s_{\beta_1}s_{\beta_5}s_{\beta_4}s_{\beta_3}s_{\beta_2}s_{\beta_4}s_{\beta_6}s_{\beta_5}s_{\beta_4}s_{\beta_2}s_{\beta_3}s_{\beta_4}s_{\beta_1}s_{\beta_3}s_{\delta-\rho})^2$$
$$(s_{\beta_2}s_{\beta_4}s_{\beta_5}s_{\beta_3}s_{\beta_4}s_{\beta_2}s_{\beta_6}s_{\beta_5}s_{\beta_4}s_{\beta_1}s_{\beta_3}s_{\beta_4}s_{\beta_5}s_{\beta_2}s_{\beta_4}s_{\delta-\rho})$$
$$(s_{\beta_2}s_{\beta_4}s_{\beta_3}s_{\beta_5}s_{\beta_6}s_{\beta_4}s_{\beta_1}s_{\beta_5}s_{\beta_3}s_{\beta_4}s_{\beta_2}s_{\delta-\rho})$$

$$t_{4\beta_1+6\beta_2+8\beta_3+12\beta_4+10\beta_5+8\beta_6}$$$$=w_0^{W_{S\backslash \{s_{\beta_{6}}\}}}w_0s_{\delta-\rho}(s_{\beta_2}s_{\beta_4}s_{\beta_3}s_{\beta_5}s_{\beta_4}s_{\beta_6}s_{\beta_2}s_{\beta_5}s_{\beta_4}s_{\beta_3}$$ $$s_{\beta_1}s_{\delta-\rho})^2s_{\beta_2}s_{\beta_4}s_{\beta_3}s_{\beta_5}s_{\beta_4}s_{\beta_2}s_{\delta-\rho}$$

(2) Replace $w_0^{W_{S\backslash \{s_{\beta_i}\}}}w_0$ with a reduced expression of it, the expressions in (1) give the reduced expressions of these ``translations''.
\end{lemma}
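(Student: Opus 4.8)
The plan is to prove this lemma by exactly the method already carried out for $C_n$: establish the six group-element identities of part (1) by comparing the action of both sides on a basis of $V'$, and then deduce reducedness in part (2) from a length count combined with subadditivity of length. Everything then feeds into Lemma \ref{two}(2), completing the $E_6$ case of Theorem \ref{main}.

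For part (1), I would first check that each vector $\beta$ appearing as the subscript of $t_\beta$ is genuinely an $i$-th distinguished coroot, i.e. that $(\beta_i,\beta)>0$ and $(\beta_j,\beta)=0$ for $j\neq i$, using the explicit realization of the $E_6$ roots in $\mathbb{R}^8$ given above. The identities themselves are proved by faithfulness of the reflection representation on $V'=V\oplus\mathbb{R}\delta$: it suffices to check that the two sides agree on $\beta_1,\dots,\beta_6$. Here I would use that every generator $s_{\beta_j}$ and $s_{\delta-\rho}$ fixes $\delta$ (since $(\delta,v)=0$ for all $v$ and hence $(\delta,\delta-\rho)=0$), so the computation is just the image of $\beta_j$ in $V\oplus\mathbb{R}\delta$ with its $\delta$-coordinate tracked. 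On the translation side the target is completely determined, $t_\beta(\beta_j)=\beta_j$ for $j\neq i$ and $t_\beta(\beta_i)=\beta_i+(\beta_i,\beta)\delta$. On the word side I would feed each $\beta_j$ through the expression from right to left, using the explicit action of $w_0^{W_{S\setminus\{s_{\beta_i}\}}}w_0$ on the simple roots (it sends $\beta_j$ to a positive root for $j\neq i$ and $\beta_i$ to the negative of a positive root), exactly as in the $C_n$ argument. This reduces each identity to a finite telescoping chain of single-reflection computations.

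For part (2), once the element is known to be a translation, its length is $\ell(t_\beta)=\sum_{\alpha\in\Phi^+}|(\alpha,\beta)|$; because $\beta$ is $i$-th distinguished and $(\alpha,\beta)=c_i(\alpha)\,(\beta_i,\beta)$, where $c_i(\alpha)$ is the coefficient of $\beta_i$ in $\alpha$, this collapses to a count of the positive roots of $E_6$ whose support contains $\beta_i$, weighted by that coefficient, which I would read off from the root data for each $i$. On the other hand, replacing $w_0^{W_{S\setminus\{s_{\beta_i}\}}}w_0$ by a reduced expression yields a word of length at most $\ell(w_0^{W_{S\setminus\{s_{\beta_i}\}}}w_0)+\ell(\underline{y})=(|\Phi^+|-|\Phi^+_{S\setminus\{s_{\beta_i}\}}|)+\ell(\underline{y})$, where $\ell(\underline{y})$ is just the number of letters in the explicit tail $\underline{y}$. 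I would verify that this upper bound equals the computed $\ell(t_\beta)$ for each $i$; since part (1) shows the word represents $t_\beta$, the inequality must be an equality, so the expression is reduced.

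The main obstacle is purely the size and non-uniformity of the bookkeeping. Unlike $C_n$, the six cases have no single closed-form pattern: the cases $i=3,5$ carry a squared block $(\cdots s_{\delta-\rho})^2$ together with two further blocks, while $i=2$ is the single reflection $s_{\delta-\rho}$, so both the chain-of-reflections verification and the letter count must be redone separately for each $i$. Working in the eight-dimensional realization makes the intermediate roots unwieldy, and the real care is in confirming that each intermediate image is the positive (or negative) root claimed, so that the telescoping in part (1) closes and the letter count in part (2) lands exactly on $\ell(t_\beta)$. No new idea beyond the $C_n$ template is required; the difficulty is the length of the verification, which is why it is carried out case by case.
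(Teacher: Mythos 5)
Your proposal is correct and follows exactly the method the paper intends: the paper omits the $E_6$ verification, stating it is ``virtually the same'' as the type $\widetilde{C}_n$ case, whose proof proceeds precisely as you describe---checking distinguishedness, verifying the group identity by acting on simple roots and invoking faithfulness of the reflection representation, then deducing reducedness from the length count via subadditivity. Your length formula $\ell(t_\beta)=\sum_{\alpha\in\Phi^+}|(\alpha,\beta)|$, collapsing to a coefficient-weighted count of positive roots supported on $\beta_i$, is exactly the computation carried out in the paper's $C_n$ argument.
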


\subsection{$E_7$}

The simple roots are $\beta_1,\beta_2,\cdots, \beta_7$ and they are numbered as in Section \ref{SectMain}. Let $\epsilon_1, \epsilon_2,\cdots, \epsilon_8$ be the standard basis of $\mathbb{R}^8$. Then simple roots can be realized as follow: $\beta_1=\frac{\epsilon_1-\epsilon_2-\epsilon_3-\cdots-\epsilon_7+\epsilon_8}{2\sqrt{2}}, \beta_2=\frac{\epsilon_1+\epsilon_2}{\sqrt{2}}, \beta_3=\frac{\epsilon_2-\epsilon_1}{\sqrt{2}}, \beta_4=\frac{\epsilon_3-\epsilon_2}{\sqrt{2}}, \beta_5=\frac{\epsilon_4-\epsilon_3}{\sqrt{2}}, \beta_6=\frac{\epsilon_5-\epsilon_4}{\sqrt{2}}, \beta_7=\frac{\epsilon_6-\epsilon_5}{\sqrt{2}}$. Therefore the coroots are $2\beta_1,2\beta_2,\cdots, 2\beta_7.$ The positive roots are: $\epsilon_8-\epsilon_7$,
$\frac{\pm\epsilon_i+\epsilon_j}{\sqrt{2}}, 1\leq i<j\leq 6, \frac{\epsilon_8-\epsilon_7+\sum_{i=1}^6(-1)^{\mu(i)}\epsilon_i}{2\sqrt{2}}$ where $\mu(i)\in \mathbb{Z}_{>0}$ and $\sum_{i=1}^6\mu(i)$ is odd. The highest root is $2\beta_1+2\beta_2+3\beta_3+4\beta_4+3\beta_{5}+2\beta_6+\beta_7$.

\begin{lemma}

(1) A set of $i$-th distinguished ``translations'' $1\leq i\leq 7$ is given as follow.
$$t_{4\beta_1+4\beta_2+6\beta_3+8\beta_4+6\beta_5+4\beta_6+2\beta_7}$$
$$=w_0^{W_{S\backslash \{s_{\beta_{1}}\}}}w_0s_{\delta-\rho}$$

 $$t_{8\beta_1+14\beta_2+16\beta_3+24\beta_4+18\beta_5+12\beta_6+6\beta_7}$$
$$=w_0^{W_{S\backslash \{s_{\beta_{2}}\}}}w_0s_{\delta-\rho}s_{\beta_1}s_{\beta_3}s_{\beta_4}s_{\beta_2}s_{\beta_5}s_{\beta_4}s_{\beta_3}s_{\beta_1}s_{\beta_6}s_{\beta_5}s_{\beta_4}s_{\beta_2}s_{\beta_3}s_{\beta_4}s_{\beta_5}$$
$$s_{\beta_7}s_{\beta_6}s_{\beta_5}s_{\beta_4}s_{\beta_3}s_{\beta_2}s_{\beta_1}s_{\beta_4}s_{\beta_3}s_{\beta_5}s_{\beta_4}s_{\beta_2}s_{\delta-\rho}s_{\beta_1}s_{\beta_3}s_{\beta_4}s_{\beta_2}s_{\beta_5}s_{\beta_4}s_{\beta_3}s_{\beta_1}s_{\beta_6}s_{\beta_5}$$
$$s_{\beta_4}s_{\beta_2}s_{\beta_3}s_{\beta_4}s_{\beta_7}s_{\beta_5}s_{\beta_6}s_{\delta-\rho}s_{\beta_1}s_{\beta_3}s_{\beta_4}s_{\beta_5}s_{\beta_2}s_{\beta_4}s_{\beta_3}s_{\beta_1}s_{\delta-\rho}$$

 $$t_{6\beta_1+8\beta_2+12\beta_3+16\beta_4+12\beta_5+8\beta_6+4\beta_7}$$
$$=w_0^{W_{S\backslash \{s_{\beta_{3}}\}}}w_0s_{\delta-\rho}s_{\beta_1}s_{\beta_3}s_{\beta_4}s_{\beta_5}s_{\beta_2}s_{\beta_6}s_{\beta_4}s_{\beta_7}s_{\beta_5}s_{\beta_3}s_{\beta_6}s_{\beta_4}s_{\beta_5}s_{\beta_2}s_{\beta_4}s_{\beta_3}s_{\beta_1}s_{\delta-\rho}$$

 $$t_{8\beta_1+12\beta_2+16\beta_3+24\beta_4+18\beta_5+12\beta_6+6\beta_7}$$
$$=w_0^{W_{S\backslash \{s_{\beta_{4}}\}}}w_0s_{\delta-\rho}s_{\beta_1}s_{\beta_3}s_{\beta_4}s_{\beta_2}s_{\beta_5}s_{\beta_4}s_{\beta_3}s_{\beta_1}s_{\beta_6}s_{\beta_5}s_{\beta_4}s_{\beta_3}s_{\beta_2}s_{\beta_4}s_{\beta_7}$$
$$s_{\beta_6}s_{\beta_5}s_{\beta_4}s_{\beta_2}s_{\beta_3}s_{\beta_4}s_{\beta_1}s_{\beta_3}s_{\delta-\rho}s_{\beta_1}s_{\beta_3}s_{\beta_4}s_{\beta_5}s_{\beta_2}$$
$$s_{\beta_6}s_{\beta_4}s_{\beta_7}s_{\beta_5}s_{\beta_3}s_{\beta_6}s_{\beta_4}s_{\beta_5}s_{\beta_2}s_{\beta_4}s_{\beta_3}s_{\beta_1}s_{\delta-\rho}$$

 $$t_{12\beta_1+18\beta_2+24\beta_3+36\beta_4+30\beta_5+20\beta_6+10\beta_7}$$
$$=w_0^{W_{S\backslash \{s_{\beta_{5}}\}}}w_0s_{\delta-\rho}s_{\beta_1}s_{\beta_3}s_{\beta_4}s_{\beta_5}s_{\beta_6}s_{\beta_7}s_{\beta_2}s_{\beta_4}s_{\beta_3}s_{\beta_5}s_{\beta_1}s_{\beta_4}s_{\beta_2}s_{\beta_6}$$
$$s_{\beta_3}s_{\beta_4}s_{\beta_5}s_{\beta_6}s_{\beta_7}s_{\beta_4}s_{\beta_3}s_{\beta_1}s_{\beta_2}s_{\beta_4}s_{\beta_5}s_{\beta_3}s_{\beta_6}s_{\beta_4}s_{\beta_5}s_{\delta-\rho}s_{\beta_1}s_{\beta_3}$$
$$s_{\beta_4}s_{\beta_5}s_{\beta_6}s_{\beta_7}s_{\beta_2}s_{\beta_4}s_{\beta_3}s_{\beta_5}s_{\beta_1}s_{\beta_4}s_{\beta_2}s_{\beta_6}s_{\beta_3}s_{\beta_4}s_{\beta_5}s_{\beta_6}s_{\beta_4}s_{\beta_3}s_{\beta_1}$$
$$s_{\beta_2}s_{\beta_4}s_{\beta_5}s_{\beta_3}s_{\beta_4}s_{\delta-\rho}s_{\beta_1}s_{\beta_3}s_{\beta_4}s_{\beta_2}s_{\beta_5}s_{\beta_4}s_{\beta_3}s_{\beta_1}s_{\beta_6}s_{\beta_5}s_{\beta_4}s_{\beta_3}s_{\beta_2}$$
$$s_{\beta_4}s_{\beta_7}s_{\beta_6}s_{\beta_5}s_{\beta_4}s_{\beta_2}s_{\beta_3}s_{\beta_4}s_{\beta_1}s_{\beta_3}s_{\delta-\rho}s_{\beta_1}s_{\beta_3}s_{\beta_4}s_{\beta_5}s_{\beta_2}s_{\beta_6}s_{\beta_4}s_{\beta_7}$$
$$s_{\beta_5}s_{\beta_3}s_{\beta_6}s_{\beta_4}s_{\beta_5}s_{\beta_2}s_{\beta_4}s_{\beta_3}s_{\beta_1}s_{\delta-\rho}$$

 $$t_{4\beta_1+6\beta_2+8\beta_3+12\beta_4+10\beta_5+8\beta_6+4\beta_7}$$
$$=w_0^{W_{S\backslash \{s_{\beta_{6}}\}}}w_0s_{\delta-\rho}s_{\beta_1}s_{\beta_3}s_{\beta_4}s_{\beta_5}s_{\beta_2}s_{\beta_4}s_{\beta_3}s_{\beta_1}s_{\delta-\rho}$$

$$t_{4\beta_1+6\beta_2+8\beta_3+12\beta_4+10\beta_5+8\beta_6+6\beta_7}$$
$$=w_0^{W_{S\backslash \{s_{\beta_{7}}\}}}w_0s_{\delta-\rho}s_{\beta_1}s_{\beta_3}s_{\beta_4}s_{\beta_2}s_{\beta_5}s_{\beta_6}s_{\beta_4}s_{\beta_5}s_{\beta_3}s_{\beta_4}s_{\beta_1}s_{\beta_2}s_{\beta_3}s_{\beta_4}s_{\beta_5}$$
$$s_{\beta_6}s_{\delta-\rho}s_{\beta_1}s_{\beta_3}s_{\beta_4}s_{\beta_5}s_{\beta_2}s_{\beta_4}s_{\beta_3}s_{\beta_1}s_{\delta-\rho}$$

(2) Replace $w_0^{W_{S\backslash \{s_{\beta_i}\}}}w_0$ with a reduced expression of it, the expressions in (1) give the reduced expressions of these ``translations''.
\end{lemma}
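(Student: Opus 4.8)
The plan is to follow verbatim the template established for type $\widetilde{C_n}$, carrying out the seven cases $i = 1, \ldots, 7$ separately. For part (1), I would first confirm that each vector $\beta$ on the left-hand side is an $i$-th distinguished coroot, i.e. that $(\beta_i,\beta) > 0$ while $(\beta_j,\beta) = 0$ for $j \neq i$; in the $\mathbb{R}^8$ realization recorded above this is a direct inner-product check. To prove each displayed identity I would then verify that the two sides agree as linear maps on $V'$. Since $\beta_1,\ldots,\beta_7$ together with $\delta$ span $V'$, and every element in sight fixes $\delta$ (because $(\delta,\cdot)=0$), it suffices to compare the action on the seven finite simple roots, after which faithfulness of the reflection representation yields the identity.

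The two sides are evaluated on simple roots exactly as in the $C_n$ computation. The translation side is immediate from $t_\beta(\alpha) = \alpha + (\alpha,\beta)\delta$: it fixes each $\beta_j$ with $j \neq i$ and sends $\beta_i \mapsto \beta_i + (\beta_i,\beta)\delta$. For the right-hand side I would first apply the standard description of the action of $w_0^{W_{S\backslash\{s_{\beta_i}\}}}w_0$ on the simple system (it permutes the simple roots indexed by $S\backslash\{s_{\beta_i}\}$ and carries $\beta_i$ to the negative of an explicit positive root), and then push each simple root through the remaining word one letter at a time, each letter being a finite simple reflection $s_{\beta_j}$ or the affine reflection $s_{\delta-\rho}$.

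For part (2) I would again imitate $C_n$. On one hand, $\ell(t_\beta) = |\Phi_{t_\beta}|$ is computed by the root-counting argument: since $\beta$ is orthogonal to every $\beta_j$ with $j\neq i$, the pairing $(\alpha,\beta)$ is controlled by the coefficient of $\beta_i$ in $\alpha$, so the length is obtained by tallying the positive roots listed above according to that coefficient. On the other hand the number of letters in the displayed word equals $\ell(w_0^{W_{S\backslash\{s_{\beta_i}\}}}w_0) + (\text{number of remaining letters})$, where $\ell(w_0^{W_{S\backslash\{s_{\beta_i}\}}}w_0) = |\Phi^+| - |\Phi^+_{S\backslash\{s_{\beta_i}\}}|$ by Lemma \ref{two}. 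As the word represents $t_\beta$ by part (1), its letter count is an upper bound for $\ell(t_\beta)$; the two tallies agree, forcing equality, so the expression is reduced.

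The main obstacle is not conceptual but combinatorial: unlike $\widetilde{C_n}$, where one uniform induction on $i$ settles all cases at once, type $E_7$ admits no such pattern and each of the seven translations must be checked by hand. The words attached to $i = 2, 4, 5$ are extremely long, so the genuine difficulty is tracking all seven simple roots through each word without arithmetic slips; once this bookkeeping is done, the argument closes exactly as in the $\widetilde{C_n}$ case, which is why the paper omits the full detail.
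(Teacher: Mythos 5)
Your proposal is correct and follows essentially the same route as the paper: check distinguishedness by inner products, prove each identity by evaluating both sides on the simple roots (both sides fix $\delta$) and invoking faithfulness of the reflection representation, then get reducedness by matching the letter count against $\ell(t_\beta)=|\Phi_{t_\beta}|$ computed from the coefficient of $\beta_i$ in each positive root, exactly as in the $\widetilde{C}_n$ case whose verification the paper declares to be "virtually the same" for the remaining types. The only work you have not done is the case-by-case bookkeeping for the seven $E_7$ words, which the paper itself also omits.
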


\subsection{$E_8$}

The simple roots are $\beta_1,\beta_2,\cdots, \beta_8$ and they are numbered as in Section \ref{SectMain}. Let $\epsilon_1, \epsilon_2,\cdots, \epsilon_8$ be the standard basis of $\mathbb{R}^8$. Then simple roots can be realized as follow: $\beta_1=\frac{\epsilon_1-\epsilon_2-\epsilon_3-\cdots-\epsilon_7+\epsilon_8}{2\sqrt{2}}, \beta_2=\frac{\epsilon_1+\epsilon_2}{\sqrt{2}}, \beta_3=\frac{\epsilon_2-\epsilon_1}{\sqrt{2}}, \beta_4=\frac{\epsilon_3-\epsilon_2}{\sqrt{2}}, \beta_5=\frac{\epsilon_4-\epsilon_3}{\sqrt{2}}, \beta_6=\frac{\epsilon_5-\epsilon_4}{\sqrt{2}}, \beta_7=\frac{\epsilon_6-\epsilon_5}{\sqrt{2}}, \beta_8=\frac{\epsilon_7-\epsilon_6}{\sqrt{2}}$. Therefore the coroots are $2\beta_1,2\beta_2,\cdots, 2\beta_8.$ The positive roots are:
$\frac{\pm\epsilon_i+\epsilon_j}{\sqrt{2}}, 1\leq i<j\leq 8, \frac{\epsilon_8+\sum_{i=1}^7(-1)^{\mu(i)}\epsilon_i}{2\sqrt{2}}$ where $\mu(i)\in \mathbb{Z}_{>0}$ and $\sum_{i=1}^7\mu(i)$ is even. The highest root is $2\beta_1+3\beta_2+4\beta_3+6\beta_4+5\beta_{5}+4\beta_6+3\beta_7+2\beta_8$.

\begin{lemma}
(1) A set of $i$-th distinguished ``translations'' $1\leq i\leq 8$ is given as follow.
$$t_{8\beta_1+10\beta_2+14\beta_3+20\beta_4+16\beta_5+12\beta_6+8\beta_7+4\beta_8}$$
$$=w_0^{W_{S\backslash \{s_{\beta_{1}}\}}}w_0s_{\delta-\rho}s_{\beta_8}s_{\beta_7}s_{\beta_6}s_{\beta_5}s_{\beta_4}s_{\beta_3}s_{\beta_2}s_{\beta_4}s_{\beta_5}s_{\beta_6}s_{\beta_7}s_{\beta_8}s_{\delta-\rho}$$

$$t_{10\beta_1+16\beta_2+20\beta_3+30\beta_4+24\beta_5+18\beta_6+12\beta_7+6\beta_8}$$

$$=w_0^{W_{S\backslash \{s_{\beta_{2}}\}}}w_0s_{\delta-\rho}s_{\beta_8}s_{\beta_7}s_{\beta_6}s_{\beta_5}s_{\beta_4}s_{\beta_2}s_{\beta_3}s_{\beta_4}s_{\beta_5}s_{\beta_6}s_{\beta_7}s_{\beta_8}s_{\beta_1}s_{\beta_3}s_{\beta_4}s_{\beta_5}s_{\beta_2}$$
$$s_{\beta_6}s_{\beta_4}s_{\beta_7}
s_{\beta_5}s_{\beta_3}s_{\beta_6}s_{\beta_4}s_{\beta_5}s_{\beta_2}s_{\beta_4}s_{\beta_3}s_{\beta_1}s_{\delta-\rho}s_{\beta_8}s_{\beta_7}s_{\beta_6}s_{\beta_5}s_{\beta_4}s_{\beta_3}s_{\beta_2}s_{\beta_4}s_{\beta_5}s_{\beta_6}$$
$$s_{\beta_7}s_{\beta_8}s_{\delta-\rho}$$

$$t_{14\beta_1+20\beta_2+28\beta_3+40\beta_4+32\beta_5+24\beta_6+16\beta_7+8\beta_8}$$
$$=w_0^{W_{S\backslash \{s_{\beta_{3}}\}}}w_0s_{\delta-\rho}s_{\beta_8}s_{\beta_7}s_{\beta_6}s_{\beta_5}s_{\beta_4}s_{\beta_3}s_{\beta_1}s_{\beta_2}s_{\beta_4}s_{\beta_3}s_{\beta_5}s_{\beta_4}s_{\beta_2}s_{\beta_6}s_{\beta_5}s_{\beta_4}s_{\beta_3}$$ $$s_{\beta_1}s_{\beta_7}s_{\beta_6}s_{\beta_5}s_{\beta_4}s_{\beta_2}s_{\beta_3}s_{\beta_4}s_{\beta_5}s_{\beta_8}s_{\beta_7}s_{\beta_6}s_{\beta_5}s_{\beta_4}s_{\beta_3}s_{\beta_2}s_{\beta_1}s_{\beta_4}s_{\beta_3}s_{\beta_5}s_{\beta_4}s_{\beta_2}s_{\delta-\rho}$$
$$s_{\beta_8}s_{\beta_7}s_{\beta_6}s_{\beta_5}s_{\beta_4}s_{\beta_2}s_{\beta_3}s_{\beta_4}s_{\beta_5}s_{\beta_6}s_{\beta_7}s_{\beta_8}s_{\beta_1}s_{\beta_3}s_{\beta_4}s_{\beta_5}s_{\beta_2}s_{\beta_6}s_{\beta_4}s_{\beta_7}s_{\beta_5}s_{\beta_3}s_{\beta_6}s_{\beta_4}$$
$$s_{\beta_5}s_{\beta_2}s_{\beta_4}s_{\beta_3}s_{\beta_1}s_{\delta-\rho}s_{\beta_8}s_{\beta_7}s_{\beta_6}s_{\beta_5}s_{\beta_4}s_{\beta_3}s_{\beta_2}s_{\beta_4}s_{\beta_5}s_{\beta_6}s_{\beta_7}s_{\beta_8}s_{\delta-\rho}$$

$$t_{20\beta_1+30\beta_2+40\beta_3+60\beta_4+48\beta_5+36\beta_6+24\beta_7+12\beta_8}$$
$$=w_0^{W_{S\backslash \{s_{\beta_{4}}\}}}w_0s_{\delta-\rho}s_{\beta_8}s_{\beta_7}s_{\beta_6}s_{\beta_5}s_{\beta_4}s_{\beta_3}s_{\beta_2}s_{\beta_4}s_{\beta_5}s_{\beta_6}s_{\beta_7}s_{\beta_8}s_{\beta_1}s_{\beta_3}s_{\beta_4}s_{\beta_2}s_{\beta_5}s_{\beta_4}s_{\beta_3}s_{\beta_1}$$
$$s_{\beta_6}s_{\beta_5}s_{\beta_4}s_{\beta_3}s_{\beta_2}s_{\beta_4}s_{\beta_5}s_{\beta_6}s_{\beta_7}s_{\beta_8}s_{\beta_6}s_{\beta_5}s_{\beta_4}s_{\beta_2}s_{\beta_3}s_{\beta_4}s_{\beta_5}s_{\beta_6}s_{\beta_7}s_{\beta_1}s_{\beta_3}s_{\beta_4}s_{\beta_5}s_{\beta_2}s_{\beta_6}s_{\beta_4}$$
$$s_{\beta_5}s_{\delta-\rho}s_{\beta_8}s_{\beta_7}s_{\beta_6}s_{\beta_5}s_{\beta_4}s_{\beta_2}s_{\beta_3}s_{\beta_4}s_{\beta_1}s_{\beta_3}s_{\beta_5}s_{\beta_6}s_{\beta_7}s_{\beta_8}s_{\beta_4}s_{\beta_5}s_{\beta_6}s_{\beta_7}s_{\beta_2}s_{\beta_4}s_{\beta_3}s_{\beta_5}s_{\beta_1}$$
$$s_{\beta_4}s_{\beta_2}s_{\beta_6}s_{\beta_3}s_{\beta_4}s_{\beta_5}s_{\beta_6}s_{\beta_7}s_{\beta_8}s_{\beta_4}s_{\beta_3}s_{\beta_1}s_{\beta_2}s_{\beta_4}s_{\beta_5}s_{\beta_3}s_{\beta_6}s_{\beta_4}s_{\beta_7}s_{\beta_5}s_{\beta_6}s_{\delta-\rho}s_{\beta_8}s_{\beta_7}s_{\beta_6}$$
$$s_{\beta_5}s_{\beta_4}s_{\beta_3}s_{\beta_2}s_{\beta_1}s_{\beta_4}s_{\beta_3}s_{\beta_5}s_{\beta_4}s_{\beta_6}s_{\beta_7}s_{\beta_8}s_{\beta_5}s_{\beta_6}s_{\beta_7}s_{\beta_2}s_{\beta_4}s_{\beta_5}s_{\beta_6}s_{\beta_3}s_{\beta_4}s_{\beta_2}s_{\beta_5}s_{\beta_4}s_{\beta_1}s_{\beta_3}$$
$$s_{\beta_4}s_{\beta_5}s_{\beta_2}s_{\beta_6}s_{\beta_4}s_{\beta_7}s_{\beta_5}s_{\beta_8}s_{\beta_6}s_{\beta_7}s_{\delta-\rho}s_{\beta_8}s_{\beta_7}s_{\beta_6}s_{\beta_5}s_{\beta_4}s_{\beta_2}s_{\beta_3}s_{\beta_4}s_{\beta_1}s_{\beta_5}s_{\beta_3}s_{\beta_6}s_{\beta_4}s_{\beta_7}$$
$$s_{\beta_5}s_{\beta_2}s_{\beta_6}s_{\beta_4}s_{\beta_5}s_{\beta_3}s_{\beta_4}s_{\beta_1}s_{\beta_3}s_{\beta_2}s_{\beta_4}s_{\beta_5}s_{\beta_6}s_{\beta_7}s_{\beta_8}s_{\delta-\rho}$$

$$t_{16\beta_1+24\beta_2+32\beta_3+48\beta_4+40\beta_5+30\beta_6+20\beta_7+10\beta_8}$$
$$=w_0^{W_{S\backslash \{s_{\beta_{5}}\}}}w_0s_{\delta-\rho}s_{\beta_8}s_{\beta_7}s_{\beta_6}s_{\beta_5}s_{\beta_4}s_{\beta_2}s_{\beta_3}s_{\beta_4}s_{\beta_1}s_{\beta_3}s_{\beta_5}s_{\beta_6}s_{\beta_7}s_{\beta_8}s_{\beta_4}s_{\beta_5}s_{\beta_6}s_{\beta_7}s_{\beta_2}s_{\beta_4}$$
$$s_{\beta_3}s_{\beta_5}s_{\beta_1}
s_{\beta_4}s_{\beta_2}s_{\beta_6}s_{\beta_3}s_{\beta_4}s_{\beta_5}s_{\beta_6}s_{\beta_7}s_{\beta_8}s_{\beta_4}s_{\beta_3}s_{\beta_1}s_{\beta_2}s_{\beta_4}s_{\beta_5}s_{\beta_3}s_{\beta_6}s_{\beta_4}s_{\beta_7}s_{\beta_5}s_{\beta_6}s_{\delta-\rho}s_{\beta_8}$$
$$s_{\beta_7}s_{\beta_6}s_{\beta_5}
s_{\beta_4}s_{\beta_3}s_{\beta_2}s_{\beta_1}s_{\beta_4}s_{\beta_3}s_{\beta_5}s_{\beta_4}s_{\beta_6}s_{\beta_7}s_{\beta_8}s_{\beta_5}s_{\beta_6}s_{\beta_7}s_{\beta_2}s_{\beta_4}s_{\beta_5}s_{\beta_6}s_{\beta_3}s_{\beta_4}s_{\beta_2}s_{\beta_5}s_{\beta_4}s_{\beta_1}$$
$$s_{\beta_3}s_{\beta_4}s_{\beta_5}
s_{\beta_2}s_{\beta_6}s_{\beta_4}s_{\beta_7}s_{\beta_5}s_{\beta_8}s_{\beta_6}s_{\beta_7}s_{\delta-\rho}s_{\beta_8}s_{\beta_7}s_{\beta_6}s_{\beta_5}s_{\beta_4}s_{\beta_2}s_{\beta_3}s_{\beta_4}s_{\beta_1}s_{\beta_5}s_{\beta_3}s_{\beta_6}s_{\beta_4}$$$$s_{\beta_7}s_{\beta_5}
s_{\beta_2}s_{\beta_6}
s_{\beta_4}s_{\beta_5}s_{\beta_3}s_{\beta_4}s_{\beta_1}s_{\beta_3}s_{\beta_2}s_{\beta_4}s_{\beta_5}s_{\beta_6}s_{\beta_7}s_{\beta_8}s_{\delta-\rho}$$

$$t_{12\beta_1+18\beta_2+24\beta_3+36\beta_4+30\beta_5+24\beta_6+16\beta_7+8\beta_8}$$
$$=w_0^{W_{S\backslash \{s_{\beta_{6}}\}}}w_0s_{\delta-\rho}s_{\beta_8}s_{\beta_7}s_{\beta_6}s_{\beta_5}s_{\beta_4}s_{\beta_3}s_{\beta_2}s_{\beta_1}s_{\beta_4}s_{\beta_3}s_{\beta_5}s_{\beta_4}s_{\beta_6}s_{\beta_7}s_{\beta_8}s_{\beta_5}s_{\beta_6}s_{\beta_7}s_{\beta_2}s_{\beta_4}s_{\beta_5}$$$$s_{\beta_6}s_{\beta_3}s_{\beta_4}
s_{\beta_2}s_{\beta_5}s_{\beta_4}s_{\beta_1}s_{\beta_3}s_{\beta_4}s_{\beta_5}s_{\beta_2}s_{\beta_6}s_{\beta_4}s_{\beta_7}s_{\beta_5}s_{\beta_8}s_{\beta_6}s_{\beta_7}s_{\delta-\rho}s_{\beta_8}s_{\beta_7}s_{\beta_6}s_{\beta_5}s_{\beta_4}s_{\beta_2}$$$$s_{\beta_3}s_{\beta_4}s_{\beta_1}s_{\beta_5}s_{\beta_3}
s_{\beta_6}s_{\beta_4}s_{\beta_7}s_{\beta_5}s_{\beta_2}s_{\beta_6}s_{\beta_4}s_{\beta_5}s_{\beta_3}s_{\beta_4}s_{\beta_1}s_{\beta_3}s_{\beta_2}s_{\beta_4}s_{\beta_5}s_{\beta_6}s_{\beta_7}s_{\beta_8}s_{\delta-\rho}$$

$$t_{8\beta_1+12\beta_2+16\beta_3+24\beta_4+20\beta_5+16\beta_6+12\beta_7+6\beta_8}$$
$$=w_0^{W_{S\backslash \{s_{\beta_{7}}\}}}w_0s_{\delta-\rho}s_{\beta_8}s_{\beta_7}s_{\beta_6}s_{\beta_5}s_{\beta_4}s_{\beta_2}s_{\beta_3}s_{\beta_4}s_{\beta_1}s_{\beta_5}s_{\beta_3}s_{\beta_6}s_{\beta_4}s_{\beta_7}s_{\beta_5}s_{\beta_2}s_{\beta_6}s_{\beta_4}s_{\beta_5}s_{\beta_3}$$$$s_{\beta_4}s_{\beta_1}s_{\beta_3}
s_{\beta_2}s_{\beta_4}s_{\beta_5}s_{\beta_6}s_{\beta_7}s_{\beta_8}s_{\delta-\rho}$$

$$t_{4\beta_1+6\beta_2+8\beta_3+12\beta_4+10\beta_5+8\beta_6+6\beta_7+4\beta_8}$$
$$=w_0^{W_{S\backslash \{s_{\beta_{8}}\}}}w_0s_{\delta-\rho}$$

(2) Replace $w_0^{W_{S\backslash \{s_{\beta_i}\}}}w_0$ with a reduced expression of it, the expressions in (1) give the reduced expressions of these ``translations''.
\end{lemma}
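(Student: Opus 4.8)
The plan is to follow the same three-step strategy that is carried out in full for type $\widetilde{C_n}$, since for $E_8$ only the bookkeeping changes. For part (1) I would first confirm that each element $t_\gamma$ appearing on a left-hand side is genuinely an $i$-th distinguished ``translation'': using the explicit realization of the coroots in terms of the $\epsilon_j$ recorded just above the statement, this is the routine check that $(\beta_i,\gamma)>0$ and $(\beta_j,\gamma)=0$ for every $j\neq i$. Then each displayed equality is to be proved as an identity of elements of $\widetilde{W}$ by evaluating both sides on the eight simple roots $\beta_1,\dots,\beta_8$ (plus the one affine check involving $s_{\delta-\rho}$) and invoking the faithfulness of the reflection representation, exactly as at the end of the $\widetilde{C_n}$ argument.

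The engine of this group-identity step is the known action of $w_0^{W_{S\backslash\{s_{\beta_i}\}}}w_0$ on simple roots: it carries the simple roots of the parabolic $W_{S\backslash\{s_{\beta_i}\}}$ to simple roots, via the $-w_0$ symmetry of that parabolic's diagram, and sends $\beta_i$ to the negative of a specific positive root. Composing this with the action of the long alternating product of the $s_{\beta_j}$ and $s_{\delta-\rho}$ on the right, I would track the image of each $\beta_k$ step by step. The telescoping pattern is the same phenomenon exhibited explicitly in the $\widetilde{C_n}$ calculation: a simple root is carried forward unchanged until the leading reflections reach it, then it is moved and the affine reflection $s_{\delta-\rho}$ converts it into a root involving $\delta$. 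The upshot for each $i$ is agreement on all simple roots, hence equality of the two elements.

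For part (2) the decisive input is the length formula $\ell(t_\gamma)=|\Phi_{t_\gamma}|=\sum_{\alpha\in\Phi^+}|(\alpha,\gamma)|$; since $\gamma$ is an $i$-th distinguished coroot one has $(\alpha,\gamma)=(\beta_i,\gamma)\,c_i(\alpha)$, where $c_i(\alpha)$ is the coefficient of $\beta_i$ in $\alpha$, so $\ell(t_\gamma)$ is a weighted count of positive roots by the multiplicity of $\beta_i$ in their support. I would compute this integer for each of the eight $\gamma$. Separately I would count the letters in the proposed right-hand word: the factor $w_0^{W_{S\backslash\{s_{\beta_i}\}}}w_0$ contributes $\ell(w_0^{W_{S\backslash\{s_{\beta_i}\}}}w_0)=|\Phi^+|-|\Phi^+_{S\backslash\{s_{\beta_i}\}}|$ generators (using $\Phi_{w_0^{W_{S\backslash\{s_{\beta_i}\}}}w_0}=\Phi^+_{\Delta\backslash\{\beta_i\},\emptyset}$), and the remaining explicit product contributes its evident number of generators. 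Granting the identity of part (1), subadditivity of length gives $\ell(t_\gamma)\le \ell(w_0^{W_{S\backslash\{s_{\beta_i}\}}}w_0)+(\text{length of the tail word})$; matching this bound against the computed value of $\ell(t_\gamma)$ forces equality, and equality of the length with the number of letters is precisely the assertion that the expression (after replacing $w_0^{W_{S\backslash\{s_{\beta_i}\}}}w_0$ by any reduced expression) is reduced.

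The main obstacle is purely the scale of the type-specific verification. $E_8$ has $120$ positive roots, and the distinguished coroots here carry very large coefficients (up to $60$ for the node $4$ translation), so both the step-by-step evaluation on simple roots for the longest words and the exact inversion count $\sum_{\alpha\in\Phi^+}|(\alpha,\gamma)|$ are lengthy. There is no conceptual difficulty beyond what the $\widetilde{C_n}$ proof already displays; the real work is to organize the long alternating products so that the telescoping is transparent and to carry out the root counts without arithmetic slips. This is exactly why the full details are suppressed and only the $\widetilde{C_n}$ computation is presented in the paper.
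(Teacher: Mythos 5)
Your proposal is correct and follows essentially the same route as the paper: the paper carries out this argument in full only for type $\widetilde{C_n}$ --- verifying the group identity by acting on the simple roots and invoking faithfulness of the reflection representation, then establishing reducedness by matching the translation's length (a count of positive roots weighted by the coefficient of $\beta_i$) against the letter count via subadditivity of length --- and explicitly omits the $E_8$ case as ``virtually the same but long.'' Your two-step plan is precisely that strategy transplanted to $E_8$, so it coincides with the paper's intended proof.
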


\subsection{$F_4$}

The simple roots are denoted by $\beta_1,\beta_2,\beta_3, \beta_4$ and they are numbered as in Section \ref{SectMain}. Let $\epsilon_1, \epsilon_2,\cdots, \epsilon_4$ be the standard basis of $\mathbb{R}^4$. Then simple roots can be realized as follow: $\beta_1=\epsilon_2-\epsilon_3, \beta_2=\epsilon_3-\epsilon_4, \beta_3=\epsilon_4, \beta_4=\frac{\epsilon_1-\epsilon_2-\epsilon_3-\epsilon_4}{2}.$ Therefore the coroots are $\beta_1,\beta_2, 2\beta_3, 2\beta_4.$ The positive roots are:
$\epsilon_i, 1\leq i\leq 4, \epsilon_i\pm\epsilon_j 1\leq i<j\leq 4, \frac{\epsilon_1\pm\epsilon_2\pm \epsilon_3\pm \epsilon_4}{2}$. The highest root is $2\beta_1+3\beta_2+4\beta_3+2\beta_4$.

\begin{lemma}
(1) A set of $i$-th distinguished ``translations'' $1\leq i\leq 4$ is given as follow.
$$t_{2\beta_1+3\beta_2+4\beta_3+2\beta_4}$$
$$=w_0^{W_{S\backslash \{s_{\beta_{1}}\}}}w_0s_{\delta-\rho}.$$

$$t_{3\beta_1+6\beta_2+8\beta_3+4\beta_4}$$
$$=w_0^{W_{S\backslash \{s_{\beta_{2}}\}}}w_0s_{\delta-\rho}s_{\beta_1}s_{\beta_2}s_{\beta_3}s_{\beta_4}s_{\beta_2}s_{\beta_3}s_{\beta_2}s_{\beta_1}s_{\delta-\rho}.$$

$$t_{4\beta_1+8\beta_2+12\beta_3+6\beta_4}$$
$$=w_0^{W_{S\backslash \{s_{\beta_{3}}\}}}w_0s_{\delta-\rho}s_{\beta_1}s_{\beta_2}s_{\beta_3}s_{\beta_2}s_{\beta_1}s_{\beta_4}s_{\beta_3}s_{\beta_2}s_{\beta_3}s_{\beta_1}s_{\beta_2}s_{\delta-\rho}$$
$$s_{\beta_1}s_{\beta_2}s_{\beta_3}s_{\beta_4}s_{\beta_2}s_{\beta_3}s_{\beta_2}s_{\beta_1}s_{\delta-\rho}$$

$$t_{2\beta_1+4\beta_2+6\beta_3+4\beta_4}$$
$$=w_0^{W_{S\backslash \{s_{\beta_{4}}\}}}w_0s_{\delta-\rho}s_{\beta_1}s_{\beta_2}s_{\beta_3}s_{\beta_2}s_{\beta_1}s_{\delta-\rho}$$

(2) Replace $w_0^{W_{S\backslash \{s_{\beta_i}\}}}w_0$ with a reduced expression of it, the expressions in (1) give the reduced expressions of these ``translations''.
\end{lemma}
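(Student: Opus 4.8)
The plan is to carry out, for $F_4$, the exact strategy that the paper presents in full only for type $\widetilde{C_n}$. The two assertions of the lemma split cleanly: part (1) is an identity of group elements in $\widetilde{W}$, which I would establish by evaluating both sides on the simple roots and invoking the faithfulness of the reflection representation of $\widetilde{W}$ on $V'$; part (2) is a statement that a given word is reduced, which I would establish by a length count combined with subadditivity, upgrading an inequality to an equality using part (1). Note that Lemma~\ref{two}(2) already guarantees a priori that $w_0^{W_{S\backslash \{s_{\beta_i}\}}}w_0$ is a left prefix of $t_\beta$ and that $t_\beta=w_0^{W_{S\backslash \{s_{\beta_i}\}}}w_0\,x$ for some $x\in W$, so the content of part (1) is the explicit determination of such an $x$ via a named word.

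For part (1), I would first confirm that each vector appearing as a subscript of $t$ is an $i$-th distinguished coroot: writing, for instance, $\beta=2\beta_1+3\beta_2+4\beta_3+2\beta_4$ (and the analogues for $i=2,3,4$) in the $\epsilon$-coordinates recorded above and pairing against the simple roots, one checks that $(\beta_i,\beta)>0$ while $(\beta_j,\beta)=0$ for $j\neq i$. Consequently $t_\beta$ acts on $V'$ by fixing every $\beta_j$ with $j\neq i$ and sending $\beta_i\mapsto \beta_i+(\beta_i,\beta)\delta$, which is the target behaviour. I would then evaluate the right-hand product on each $\beta_j$: on the outer factor I use the standard action of $w_0^{W_{S\backslash \{s_{\beta_i}\}}}w_0$ on simple roots (it realizes the $-w_0$ permutation of $\{\beta_j:j\neq i\}$ inside the parabolic and sends $\beta_i$ to a definite negative root), and I propagate each simple root through the trailing word of reflections one letter at a time, exactly as in the displayed $\widetilde{C_n}$ calculation, with $s_{\delta-\rho}$ supplying the $\delta$-shifts. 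Matching the two sides on $\beta_1,\beta_2,\beta_3,\beta_4$ then forces equality of the group elements.

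For part (2), I would compute $\ell(t_\beta)=|\Phi_{t_\beta}|$ by the weighted count of positive roots of $F_4$ (the coefficient-of-$\beta_i$ bookkeeping used for $\widetilde{C_n}$), and separately compute $\ell(w_0^{W_{S\backslash \{s_{\beta_i}\}}}w_0)=|\Phi^+|-|\Phi^+_{S\backslash \{s_{\beta_i}\}}|$ together with the number of letters in the trailing word. Subadditivity of the length function gives
$$\ell\big(w_0^{W_{S\backslash \{s_{\beta_i}\}}}w_0\,x\big)\le \ell\big(w_0^{W_{S\backslash \{s_{\beta_i}\}}}w_0\big)+(\text{letter count of the trailing word}),$$
and the arithmetic should show the right-hand side equals $\ell(t_\beta)$; since part (1) identifies the full product with $t_\beta$, the inequality is forced to be an equality, so the written expression has no redundant letters and is therefore reduced.

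The main obstacle is entirely computational and is concentrated in the case $i=3$, whose trailing word is by far the longest—reflecting that $\beta_3$ carries the largest coefficient $4$ in the highest root $2\beta_1+3\beta_2+4\beta_3+2\beta_4$. Propagating a simple root through a product of more than twenty reflections without a sign or index error is the delicate step, and the non-simply-laced geometry of $F_4$ (coroots $\beta_1,\beta_2,2\beta_3,2\beta_4$) demands care in normalizing the pairings $(\beta_i,\beta)$, since these determine the exact $\delta$-coefficients and hence the length count. Once the $i=3$ bookkeeping is verified, the remaining cases $i=1,2,4$ are short and follow the same template.
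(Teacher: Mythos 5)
Your proposal is correct and follows exactly the route the paper itself prescribes: the paper proves only the $\widetilde{C_n}$ case in full detail and explicitly states that the verification for the other types, including $F_4$, is ``virtually the same,'' and your plan---checking distinguishedness, evaluating both sides on simple roots and invoking faithfulness of the reflection representation for (1), then the weighted count of positive roots combined with length subadditivity upgraded to equality via (1) for (2)---is precisely that template, correctly adapted to the $F_4$ data (coroots $\beta_1,\beta_2,2\beta_3,2\beta_4$ and highest root $2\beta_1+3\beta_2+4\beta_3+2\beta_4$).
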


\subsection{$G_2$}

The simple roots are denoted by $\beta_1,\beta_2$ and they are numbered as in Section \ref{SectMain}. Let $\epsilon_1, \epsilon_2,\epsilon_3$ be the standard basis of $\mathbb{R}^3$. Then simple roots can be realized as follow: $\beta_1=\epsilon_1-\epsilon_2, \beta_2=-2\epsilon_1+\epsilon_2+\epsilon_3.$ Therefore the coroots are $\beta_1,\frac{\beta_2}{3}.$ The positive roots are:
$\beta_1,\beta_2,\beta_1+\beta_2, 2\beta_1+\beta_2, 3\beta_1+\beta_2,3\beta_1+2\beta_2$. The highest root is $3\beta_1+2\beta_2$.

\begin{lemma}
(1) A set of $i$-th distinguished ``translations'' $i=1, 2$ is given as follow.
$$t_{2\beta_1+\beta_2}$$
$$=w_0^{W_{S\backslash \{s_{\beta_{1}}\}}}w_0s_{\delta-\rho}s_{\beta_2}s_{\beta_1}s_{\beta_2}s_{\delta-\rho}$$

$$t_{\beta_1+\frac23\beta_2}$$
$$=w_0^{W_{S\backslash \{s_{\beta_{2}}\}}}w_0s_{\delta-\rho}$$

(2) Replace $w_0^{W_{S\backslash \{s_{\beta_i}\}}}w_0$ with a reduced expression of it, the expressions in (1) give the reduced expressions of these ``translations''.
\end{lemma}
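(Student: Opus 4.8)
The plan is to follow the same two-step template used above for type $\widetilde{C_n}$, which is especially short here because $W$ has rank $2$ and $w_0=-\mathrm{id}$. First I would check that the two subscripts are genuinely distinguished coroots lying in the coroot lattice (spanned by $\beta_1$ and $\frac{\beta_2}{3}$). Using $(\beta_1,\beta_1)=2$, $(\beta_1,\beta_2)=-3$, $(\beta_2,\beta_2)=6$, one finds $(\beta_1,2\beta_1+\beta_2)=1$ and $(\beta_2,2\beta_1+\beta_2)=0$, so $2\beta_1+\beta_2=2\beta_1+3\cdot\tfrac{\beta_2}{3}$ is a $1$st distinguished coroot; likewise $(\beta_1,\beta_1+\tfrac23\beta_2)=0$ and $(\beta_2,\beta_1+\tfrac23\beta_2)=1$, so $\beta_1+\tfrac23\beta_2=\beta_1+2\cdot\tfrac{\beta_2}{3}$ is a $2$nd distinguished coroot.

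To establish the two identities in (1), I would use the faithfulness of the action of $\widetilde{W}$ on $V'$. Every factor on both sides (the finite reflections $s_{\beta_j}$, the element $w_0$, and the affine reflection $s_{\delta-\rho}$) fixes $\delta$, and a translation satisfies $t_\gamma(\beta_j)=\beta_j+(\beta_j,\gamma)\delta$; hence it suffices to verify that the right-hand word sends each of the two simple roots $\beta_1,\beta_2$ to the same vector as the claimed $t_\gamma$. For $i=2$ this is immediate: $(\beta_1,\rho)=0$ gives $s_{\delta-\rho}(\beta_1)=\beta_1$ and so $s_{\beta_1}w_0s_{\delta-\rho}(\beta_1)=\beta_1$, while $(\beta_2,\rho)=3$ gives $s_{\delta-\rho}(\beta_2)=\delta-3\beta_1-\beta_2$, after which $s_{\beta_1}w_0$ returns $\beta_2+\delta$, matching $t_{\beta_1+\frac23\beta_2}$. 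For $i=1$ I would propagate $\beta_1$ and $\beta_2$ through $s_{\delta-\rho}s_{\beta_2}s_{\beta_1}s_{\beta_2}s_{\delta-\rho}$ and then through $s_{\beta_2}w_0$; a direct computation gives the images $\beta_1+\delta$ and $\beta_2$, agreeing with $t_{2\beta_1+\beta_2}$ since $(\beta_1,2\beta_1+\beta_2)=1$ and $(\beta_2,2\beta_1+\beta_2)=0$. Faithfulness of the reflection representation then forces the claimed group-element equalities.

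For part (2) I would compare lengths. The length of a translation equals the size of its inversion set: since $\gamma$ is $i$th distinguished with $(\beta_j,\gamma)=0$ for $j\neq i$ and $(\beta_i,\gamma)=1$, for each $\alpha=\sum_j c_j\beta_j\in\Phi^+$ the affine roots $\alpha+k\delta$ lying in $\Phi_{t_\gamma}$ are exactly those with $0\le k\le c_i-1$, so $\ell(t_\gamma)=\sum_{\alpha\in\Phi^+}c_i(\alpha)$. Reading off the coefficients of the six positive roots gives $\ell(t_{2\beta_1+\beta_2})=1+0+1+2+3+3=10$ and $\ell(t_{\beta_1+\frac23\beta_2})=0+1+1+1+1+2=6$. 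On the other hand, by Lemma \ref{two} and the identity $\Phi_{w_0^{W_{S\setminus\{s_{\beta_i}\}}}w_0}=\Phi^+_{\Delta\setminus\{\beta_i\},\emptyset}$, the prefix $w_0^{W_{S\setminus\{s_{\beta_i}\}}}w_0$ has length equal to the number of positive roots whose support contains $\beta_i$, namely $5$ in each case, while the trailing factor contributes $5$ letters when $i=1$ and $1$ letter when $i=2$. Since $5+5=10$ and $5+1=6$, subadditivity of length is saturated, so the displayed concatenation (after substituting any reduced expression for $w_0^{W_{S\setminus\{s_{\beta_i}\}}}w_0$) has exactly $\ell(t_\gamma)$ letters and is therefore reduced.

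I expect no genuine conceptual obstacle here, since $\widetilde{G_2}$ is the smallest case; the only point demanding care is the bookkeeping of the affine reflection $s_{\delta-\rho}$ in the $i=1$ word, where both of its occurrences must be used — dropping one produces an element whose linear part is nontrivial, hence not a translation at all.
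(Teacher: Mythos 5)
Your proposal is correct and follows exactly the template the paper announces (and carries out in full only for type $\widetilde{C_n}$): verify the coroots are distinguished, prove the identities in (1) by acting both sides on $\beta_1,\beta_2$ and invoking faithfulness of the reflection representation, then prove (2) by showing the letter count of the concatenation equals $\ell(t_\gamma)=\sum_{\alpha\in\Phi^+}(\alpha,\gamma)$ so that subadditivity of length is saturated. All of your numerical checks (inner products, the images $\beta_1+\delta$, $\beta_2$, $\beta_2+\delta$, and the lengths $10=5+5$ and $6=5+1$) are accurate, so this is precisely the omitted $\widetilde{G_2}$ verification.
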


\section{infinite Coxeter elements}\label{infiniteCox}

A Coxeter element of $(W,S)$ is a product of all simple reflections $s\in S$ in any given order. Coxeter elements are important in the theory of Coxeter groups. In particular they are used in the study of polynomial invariants of  Coxeter groups. By \cite{Shi}, there exists a bijection between the set of Coxeter elements and the set of acyclic orientation of the Coxeter graph of $(W,S)$. Let $c$ be a Coxeter element. It determines an acyclic orientation in the following way: Suppose that $s_i, s_j\in S$ are adjacent in the Coxeter graph. We assign the direction from $s_i$ to $s_j$ if $s_i$ occurs to the left of $s_j$ in a (equivalently any) reduced expression of $c$. If $W$ is infinite and irreducible, an infinite reduced word of the form $c^{\infty}$ where $c$ is a Coxeter element is called an infinite Coxeter element.

It is natural to ask whether an infinite Coxeter element is minimal and  which of the minimal infinite reduced words described in the main theorem correspond to infinite Coxeter elements. For type $\widetilde{C_n}$, this has a satisfactory answer which will be discussed in this section. Now let $W$ and $\Phi$ be of type $C_n.$

 Let $u\in \widetilde{W}$ and let $s$ be a simple reflection of $\widetilde{W}$. We say that $s$ is a left (resp. right) descent of $u$ if $\ell(su)<\ell(u)$ (resp. $\ell(us)<\ell(u)$). We define a $W$-action on the set of Coxeter elements of $\widetilde{W}$ as follow.

Let $c$ be a Coxeter element of $\widetilde{W}$. Define $s_{\beta_i}\cdot c=s_{\beta_i}cs_{\beta_i}$ if $s_{\beta_i}$ is a left descent or a right descent of $c$ and $s_{\beta_i}\cdot c=c$ otherwise.
Let $w=s_{\beta_{i_1}}\cdots s_{\beta_{i_k}}$. Define $w\cdot c=s_{\beta_{i_1}}\cdot (s_{\beta_{i_2}}\cdot \cdots (s_{\beta_{i_k}}\cdot c))$.

\begin{lemma}\label{actionB}
(1) This is a well-defined $W$-action.

(2) Under this action, the stabilizer of $s_{\beta_n}s_{\beta_{n-1}}\cdots s_{\beta_1}s_{\delta-\rho}$ is the parabolic subgroup $W_{\{s_{\beta_1}, \cdots, s_{\beta_{n-1}}\}}$.
\end{lemma}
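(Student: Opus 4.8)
The plan is to transport the toggling action into a concrete combinatorial model in which both claims become transparent. The affine Coxeter graph of type $\widetilde{C_n}$ is a path on the $n+1$ nodes, which I label $0 := s_{\delta-\rho}$ and $i := s_{\beta_i}$ for $1\le i\le n$, with edges $e_i = \{i,i+1\}$, $0\le i\le n-1$. By the bijection between Coxeter elements and acyclic orientations recalled above (from \cite{Shi}), a Coxeter element $c$ is the same data as an orientation of this path, which I record as a sign vector $\Phi(c) = (x_0,\dots,x_{n-1})\in\{+,-\}^n$, with $x_i = +$ when $e_i$ is oriented from $i$ to $i+1$ and $x_i = -$ otherwise. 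Since the $n$ edges of a path may be oriented independently, $\Phi$ is a bijection, so there are exactly $2^n$ Coxeter elements. As a preliminary I would record the two standard features of this dictionary: $s_{\beta_i}$ is a left descent of $c$ iff node $i$ is a source and a right descent iff node $i$ is a sink, and conjugating $c$ by such a source/sink $s_{\beta_i}$ reverses precisely the edges incident to $i$ (so that $s_{\beta_i}cs_{\beta_i}$ is again a Coxeter element).

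Next I would compute the effect of each generator on the sign vector. For an interior node $k$ ($1\le k\le n-1$), node $k$ is a source exactly when $(x_{k-1},x_k)=(-,+)$ and a sink exactly when $(x_{k-1},x_k)=(+,-)$; in either case the toggle reverses both incident edges, sending $(x_{k-1},x_k)$ to $(\bar x_{k-1},\bar x_k)$, whereas if node $k$ is neither a source nor a sink the toggle fixes $c$. The crucial observation is that in all four cases the net effect on $(x_{k-1},x_k)$ is exactly the transposition of these two coordinates: when they differ, flipping both equals swapping them, and when they agree both the toggle and the swap do nothing. Hence $s_{\beta_k}$ acts on $\{+,-\}^n$ as the transposition $\sigma_k$ of positions $k-1$ and $k$. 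For the leaf node $n$ the single incident edge forces node $n$ to be always a source or a sink, and the toggle always flips $x_{n-1}$; thus $s_{\beta_n}$ acts as the sign change $\sigma_n$ of the last coordinate. (The node $0=s_{\delta-\rho}$ is never toggled, which is exactly why no coordinate other than the last may have its sign changed directly.)

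The operators $\sigma_1,\dots,\sigma_n$ are the standard Coxeter generators of the hyperoctahedral group $W(C_n)$ acting faithfully on $\{+,-\}^n$ by signed permutations, so they satisfy the braid relations of type $C_n$ and define a homomorphism $\sigma\colon W\to\mathrm{Sym}(\{+,-\}^n)$ with $s_{\beta_i}\mapsto\sigma_i$. Since $\Phi(s_{\beta_i}\cdot c)=\sigma_i(\Phi(c))$ for every generator and every $c$, an easy induction gives $\Phi(\underline w\cdot c)=\sigma_{\underline w}(\Phi(c))$ where $\sigma_{\underline w}$ denotes the corresponding product of the $\sigma_i$; as $\sigma_{\underline w}$ depends only on the image of $\underline w$ in $W$, so does $\underline w\cdot c$, which is precisely the well-definedness asserted in (1). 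For (2), I would read off $\Phi(s_{\beta_n}s_{\beta_{n-1}}\cdots s_{\beta_1}s_{\delta-\rho})=(-,-,\dots,-)$, since in that word every higher-indexed node precedes every lower-indexed one and $s_{\delta-\rho}$ is last, so every edge $e_i$ points toward $i$. The stabilizer of the all-minus vector consists exactly of the sign-change-free elements, i.e. the pure coordinate permutations $\langle\sigma_1,\dots,\sigma_{n-1}\rangle$; by faithfulness of $\sigma$, pulling back identifies the stabilizer of that Coxeter element with $W_{\{s_{\beta_1},\dots,s_{\beta_{n-1}}\}}$, as claimed. (As a check, orbit--stabilizer gives $|W|/2^n=n!$, matching the order of this parabolic.)

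The step requiring the most care is the interior-node computation together with the unifying observation that the toggle there is a genuine coordinate transposition in every case, even when the node is neither a source nor a sink. This is what makes the toggling action agree on the nose with the faithful signed-permutation representation and thereby supplies the braid relations for free, in place of a direct and tedious case analysis of $(s_{\beta_i}s_{\beta_j})^{m_{ij}}\cdot c=c$.
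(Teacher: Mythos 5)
Your proposal is correct, and it takes a genuinely different route from the paper's. For part (1) the paper checks the type $C_n$ Coxeter relations one at a time directly on orientations: involutivity, the commuting relations, the order-$3$ braid relations for consecutive interior nodes (eight pictured cases), and the order-$4$ relation for $s_{\beta_{n-1}},s_{\beta_n}$ (four pictured cases). You instead observe that, under the sign-vector encoding, each interior toggle is \emph{literally} the transposition of two adjacent coordinates (the fixed-point case being the swap of two equal coordinates) and the toggle at node $n$ is the sign change of the last coordinate, so the whole action is the restriction to hypercube vertices of the standard faithful signed-permutation representation of the hyperoctahedral group; the relations then hold because they hold for these operators in $\mathrm{Sym}(\{+,-\}^{n})$, and well-definedness follows from the universal property of the Coxeter presentation of $W$. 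For part (2) the paper shows the parabolic fixes $s_{\beta_n}\cdots s_{\beta_1}s_{\delta-\rho}$, proves by an induction on the position of a differing edge (together with an appeal to \cite{Shi2}) that the orbit of this element is all $2^n$ orientations, and concludes by orbit--stabilizer counting $|W|/2^{n}=n!$ that the stabilizer can be no larger; you instead read the stabilizer off directly as the sign-change-free signed permutations $\langle\sigma_1,\dots,\sigma_{n-1}\rangle$ and pull back along the injective homomorphism. Your identification buys both statements at once, eliminates the case analysis, the transitivity induction and the citation of Shi's conjugacy theorem, and yields transitivity of the action on Coxeter elements for free --- a fact the paper's proof of Lemma \ref{coxlemma}(3) relies on; what the paper's argument buys is self-containedness, since it never needs to recognize the toggles as a known representation. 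The one place where your write-up matches rather than exceeds the paper's level of rigor is the initial dictionary (descents correspond to sources/sinks, and conjugating by a source or sink reverses exactly the incident edges), which both you and the paper take as known from \cite{Shi}.
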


\begin{proof}
(1) Each Coxeter element corresponds to the orientation of a path graph with $n+1$ nodes $O_c$. If $s_{\beta_i}$ is a left descent (resp. right descent) of $c$, then the $i$-th node is a source (resp. sink) of this orientation. If $s_{\beta_i}$ is a left descent or a right descent of $c$, for $i<n$ the action of $s_{\beta_i}$ on $O_c$ changes the directions of the two edges which have the $i$-th vertex as their  common endpoint and for $i=n$, the action changes the rightmost edge. Otherwise the action of  $s_{\beta_i}$ leaves $O_c$ unchanged.
First clearly $s_i\cdot (s_i\cdot c)=c$.
Now let $|i-j|\geq 2$. One easily verifies that $s_i\cdot (s_j\cdot c)=s_j\cdot (s_i\cdot c)$.

Now we verify that $s_{\beta_i} \cdot (s_{\beta_{i+1}}\cdot (s_{\beta_i}\cdot c))=s_{\beta_{i+1}} \cdot (s_{\beta_{i}}\cdot (s_{\beta_{i+1}}\cdot c)), 1\leq i\leq n-2$. It suffices to check the actions on the corresponding orientation $O_c.$

There are eight cases. We depict these cases as follows.

\tikzset{every picture/.style={line width=0.75pt}} 



(2) Since the only left descent of  $s_{\beta_n}s_{\beta_{n-1}}\cdots s_{\beta_1}s_{\delta-\rho}$ is $s_{\beta_n}$, the subgroup $\newline W_{\{s_{\beta_1}, \cdots, s_{\beta_{n-1}}\}}$ stabilizes $s_{\beta_n}s_{\beta_{n-1}}\cdots s_{\beta_1}s_{\delta-\rho}$.

Note that for a Coxeter element $c$ of $\widetilde{W}$, $s_{\delta-\rho}cs_{\delta-\rho}$ must be another Coxeter element.
We will next show that there must be some $w\in W$ such that $w\cdot c=s_{\delta-\rho}cs_{\delta-\rho}.$

To show this we prove a stronger fact. Let $O_c$ and $O_{c'}$ be two orientations that differ by the direction of one edge. We prove that one can find $w\in W$ such that $w\cdot O_c=O_{c'}$.
We argue by induction. Suppose $O_c$ and $O_{c'}$ differ by the first edge from the right. Then one can choose $w=s_{\beta_n}$.
Now we assume that $O_c$ and $O_{c'}$ differ by the $k-$th edge from the right.
Suppose that $i$ is the smallest number such that $n-k+1\leq i\leq n$ and $s_{\beta_i}\cdot c\neq c$.
Then $s_{\beta_{n-k+1}}\cdots s_{\beta_i}O_c$ is an orientation which differs from $O_{c'}$ by the direction of one edges with that differing edge closer to the right.
Therefore the assertion follows from induction.

By \cite{Shi2}, all Coxeter elements (for $\widetilde{W}$ of type $\widetilde{C}_n$) are conjugate. Combining this fact with the  discussion in the previous paragraph, one sees that $$W\cdot\{s_{\beta_n}s_{\beta_{n-1}}\cdots s_{\beta_1}s_{\delta-\rho}\}$$ is the full set of the Coxeter elements. This set has cardinality $2^{n}$. Then the cardinality of the stabilizer of $s_{\beta_n}s_{\beta_{n-1}}\cdots s_{\beta_1}s_{\delta-\rho}$ is $\frac{|W|}{2^n}=\frac{n!2^n}{2^n}=n!$. Therefore the stabilizer is exactly $W_{\{s_{\beta_1}, \cdots, s_{\beta_{n-1}}\}}$.
\end{proof}

\begin{lemma}\label{coxlemma}
(1) $w_{0}^{W_{S\backslash\{s_{\beta_n}\}}}w_0$ has a reduced expression $$(s_{\beta_n}s_{\beta_{n-1}}\cdots s_{\beta_1})(s_{\beta_n}s_{\beta_{n-1}}\cdots s_{\beta_2})\cdots (s_{\beta_n}s_{\beta_{n-1}})s_{\beta_n}$$

(2) $$t_{2\beta_1+4\beta_2+\cdots+2(n-1)\beta_{n-1}+n\beta_n}$$
$$=w_0^{W_{S\backslash \{s_{\beta_n}\}}}w_0s_{\delta-\rho}(s_{\beta_1}s_{\delta-\rho})(s_{\beta_2}s_{\beta_1}s_{\delta-\rho})\cdots(s_{\beta_{n-1}}s_{\beta_{n-2}}\cdots s_{\beta_1}s_{\delta-\rho})$$
$$=(s_{\beta_n}s_{\beta_{n-1}}\cdots s_{\beta_1}s_{\delta-\rho})^{n}$$




(3) The set of Coxeter elements of $\widetilde{W}$ equals to  $\{u^{-1}(s_{\beta_n}s_{\beta_{n-1}}\cdots s_{\beta_1}s_{\delta-\rho})u|$  $u$ is a left prefix of $w_0^{W_{S\backslash \{s_{\beta_n}\}}}w_0\}.$
\end{lemma}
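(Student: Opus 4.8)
The plan is to treat the three parts in order, feeding in the type-$C_n$ computations already established. Throughout I write $s_i:=s_{\beta_i}$ and $s_0:=s_{\delta-\rho}$, and set $J:=S\setminus\{s_{\beta_n}\}$, so $W_J$ is the type-$A_{n-1}$ parabolic and, by Lemma \ref{two}, $\Phi_{w_0^{W_J}w_0}=\Phi^+_{\Delta\setminus\{\beta_n\},\emptyset}$ is the set of positive roots whose support contains $\beta_n$; in particular $\ell(w_0^{W_J}w_0)=\tfrac{n(n+1)}{2}$.

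For (1), I would show that the displayed word $w=(s_ns_{n-1}\cdots s_1)(s_n\cdots s_2)\cdots(s_n)$ is a reduced expression of $w_0^{W_J}w_0$ directly from its inversion set. The word has exactly $n+(n-1)+\cdots+1=\tfrac{n(n+1)}{2}$ letters, matching $\ell(w_0^{W_J}w_0)$, so it suffices to prove it is reduced and that $\Phi_w=\Phi^+_{\Delta\setminus\{\beta_n\},\emptyset}$. I would compute the usual sequence of roots $\beta^{(k)}=s_{i_1}\cdots s_{i_{k-1}}(\beta_{i_k})$ and check that these are $\tfrac{n(n+1)}{2}$ distinct positive roots, each with $\beta_n$ in its support; distinctness yields reducedness, and the count then forces $\Phi_w=\Phi^+_{\Delta\setminus\{\beta_n\},\emptyset}$, hence $w=w_0^{W_J}w_0$. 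An induction on $n$ is equally natural, since deleting the initial block $s_n\cdots s_1$ leaves the analogous word for the rank-$(n-1)$ subsystem on $\{\beta_2,\ldots,\beta_n\}$.

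For (2), the first equality is exactly the identity proved in the preceding type-$C_n$ lemma (its $i=n$ case), so I would quote it. For the second equality I would substitute the reduced expression from (1) into $w_0^{W_J}w_0\,s_0(s_1s_0)(s_2s_1s_0)\cdots(s_{n-1}\cdots s_1 s_0)$ and rearrange it into $(s_ns_{n-1}\cdots s_1 s_0)^n$ using only the commutations $s_0 s_j=s_j s_0$ for $j\ge 2$ and $s_i s_j=s_j s_i$ for $|i-j|\ge 2$. As a cleaner and more robust alternative I would verify $c_0^{\,n}=t_\lambda$ with $c_0:=s_n\cdots s_1 s_0$ directly through the reflection representation: since $c_0$ is a Coxeter element of the infinite irreducible group $\widetilde W$ it is straight, so $\ell(c_0^{\,n})=n(n+1)=\ell(t_\lambda)$, and one checks that $c_0^{\,n}$ and $t_\lambda$ act identically on the basis $\{\beta_1,\ldots,\beta_n,\delta\}$ of $V'$, whereupon faithfulness of the reflection representation gives the identity. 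The case $n=2$ already shows both routes, since $s_2 s_1 s_2 s_0 s_1 s_0=s_2 s_1 s_0 s_2 s_1 s_0$ after the single commutation $s_0 s_2=s_2 s_0$.

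For (3), I would build on Lemma \ref{actionB}: the $W$-orbit of $c_0$ is the full set of $2^n$ Coxeter elements with stabilizer $W_J$, and whenever every factor in a product acts by a genuine edge flip the $W$-action coincides with honest conjugation, $w\cdot c_0=w c_0 w^{-1}$. The plan is: (i) show the left prefixes of $w_0^{W_J}w_0$ are exactly the $2^n=|W|/|W_J|$ minimal-length representatives ${}^J W$ of $W_J\backslash W$; (ii) show that for such a prefix $u$, conjugating $c_0$ by $u^{-1}$ realizes, step by step along a reduced word, a sequence of genuine source/sink flips, so $u^{-1}c_0 u=u^{-1}\cdot c_0$ is a Coxeter element; and (iii) conclude by counting, since distinct prefixes yield the $2^n$ distinct elements of the orbit, which is all Coxeter elements. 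The main obstacle is precisely this bookkeeping: verifying that the left prefixes of $w_0^{W_J}w_0$ are exactly ${}^J W$, and that conjugation by each such prefix stays an honest iterated edge flip rather than reaching a stage where the relevant simple reflection is neither a source nor a sink. I expect to handle it by tracking the orientation $O_{c_0}$ under the reduced word of (1) read in reverse, inducting on the prefix length; parts (1) and (2) should be comparatively routine given the earlier lemmas.
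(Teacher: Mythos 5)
Your parts (1) and (2) are correct and essentially the paper's own route: (1) is the same direct verification (the paper checks the action on simple roots plus a length count; your root-sequence/inversion-set computation with the same count is equivalent bookkeeping), and for (2) the paper likewise quotes the earlier type-$C_n$ lemma for the first equality and then performs exactly the commutation rearrangement you describe. Your ``cleaner alternative'' for (2) is admissible but slightly misaimed: straightness and the length count are irrelevant to the group identity $c_0^{\,n}=t_\lambda$ (only the action on a basis of $V'$ matters, and computing the action of $c_0^{\,n}$ is more work than the word shuffle); reducedness is the business of the companion lemma in Section \ref{translation}, not of this one.

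The genuine gap is in (3), and it sits exactly where you flag ``the main obstacle.'' Your steps (i) and (iii) are fine: the left prefixes of $w_0^{W_J}w_0$ (here $J=S\setminus\{s_{\beta_n}\}$) are precisely the elements with no left descent in $J$, i.e.\ the $2^n$ minimal-length representatives of $W_J\backslash W$, and since the stabilizer of $c_0=s_{\beta_n}\cdots s_{\beta_1}s_{\delta-\rho}$ under the dot action is $W_J$ (Lemma \ref{actionB}(2)), the elements $u^{-1}\cdot c_0$ over prefixes $u$ are pairwise distinct and exhaust the $2^n$ Coxeter elements. But the lemma asserts this for the honest conjugates $u^{-1}c_0u$, so you still must prove your (ii), namely that along a reduced word for $u^{-1}$ no step of the dot action is trivial; you leave this to hoped-for orientation tracking, which is precisely the crux and is not carried out. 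The missing idea---and how the paper closes it---is that (ii) is a formal consequence of (i) and the counting, with no orientation bookkeeping at all. Suppose $u^{-1}=s_{i_1}\cdots s_{i_k}$ is reduced and some step is trivial, say $s_{i_j}\cdot c_j=c_j$ where $c_j=(s_{i_{j+1}}\cdots s_{i_k})\cdot c_0$. Because the dot action is a well-defined $W$-action, deleting that letter gives $u^{-1}\cdot c_0=v\cdot c_0$ for the element $v=s_{i_1}\cdots s_{i_{j-1}}s_{i_{j+1}}\cdots s_{i_k}$ of length strictly less than $k$. Then $v^{-1}u^{-1}$ stabilizes $c_0$, hence lies in $W_J$, so $u^{-1}$ and $v$ lie in the same left coset $u^{-1}W_J$; but $u^{-1}$, having no right descent in $J$, is the unique minimal-length element of that coset, contradicting $\ell(v)<\ell(u^{-1})$. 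Hence every step is a genuine source/sink flip, $u^{-1}\cdot c_0=u^{-1}c_0u$, and (3) follows. Without this argument (or a completed induction on orientations in its place), your proposal does not yet prove (3).
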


\begin{proof}
One can verify (1) by checking that the actions on $\beta_1,\cdots, \beta_n$ of the left hand side and the right hand side coincide and  that the lengths of both sides agree. We omit the details.

To see (2), one computes
$$(s_{\beta_n}s_{\beta_{n-1}}\cdots s_{\beta_1})(s_{\beta_n}s_{\beta_{n-1}}\cdots s_{\beta_2})\cdots (s_{\beta_n}s_{\beta_{n-1}})s_{\beta_n}$$
$$s_{\delta-\rho}(s_{\beta_1}s_{\delta-\rho})(s_{\beta_2}s_{\beta_1}s_{\delta-\rho})\cdots(s_{\beta_{n-1}}s_{\beta_{n-2}}\cdots s_{\beta_1}s_{\delta-\rho})$$
$$=(s_{\beta_n}s_{\beta_{n-1}}\cdots s_{\beta_1}s_{\delta-\rho})$$
$$(s_{\beta_n}s_{\beta_{n-1}}\cdots s_{\beta_2})(s_{\beta_n}s_{\beta_{n-1}}\cdots s_{\beta_3})\cdots (s_{\beta_n}s_{\beta_{n-1}})s_{\beta_n}$$
$$(s_{\beta_1}s_{\delta-\rho})(s_{\beta_2}s_{\beta_1}s_{\delta-\rho})\cdots(s_{\beta_{n-1}}s_{\beta_{n-2}}\cdots s_{\beta_1}s_{\delta-\rho})$$
$$=(s_{\beta_n}s_{\beta_{n-1}}\cdots s_{\beta_1}s_{\delta-\rho})$$
$$(s_{\beta_n}s_{\beta_{n-1}}\cdots s_{\beta_2}s_{\beta_1})(s_{\beta_n}s_{\beta_{n-1}}\cdots s_{\beta_2})\cdots (s_{\beta_n}s_{\beta_{n-1}}s_{\beta_{n-2}})(s_{\beta_n}s_{\beta_{n-1}})$$
$$(s_{\delta-\rho})(s_{\beta_1}s_{\delta-\rho})\cdots(s_{\beta_{n-2}}\cdots s_{\beta_1}s_{\delta-\rho})$$
$$=(s_{\beta_n}s_{\beta_{n-1}}\cdots s_{\beta_1}s_{\delta-\rho})^2$$
$$(s_{\beta_n}s_{\beta_{n-1}}\cdots s_{\beta_2})\cdots (s_{\beta_n}s_{\beta_{n-1}}s_{\beta_{n-2}})(s_{\beta_n}s_{\beta_{n-1}})$$
$$(s_{\beta_1}s_{\delta-\rho})\cdots(s_{\beta_{n-2}}\cdots s_{\beta_1}s_{\delta-\rho})$$
$$=\cdots$$
$$=(s_{\beta_n}s_{\beta_{n-1}}\cdots s_{\beta_1}s_{\delta-\rho})^n.$$

(3) By Lemma \ref{actionB} (2), every Coxeter element of $\widetilde{W}$ is of the form $$u^{-1}\cdot s_{\beta_n}s_{\beta_{n-1}}\cdots s_{\beta_1}s_{\delta-\rho}$$ where $u$ is an element in $W$ whose left descent set is $\{s_{\beta_n}\}$, i.e. $u$ is a left prefix of $w_0^{W_{S\backslash \{s_{\beta_n}\}}}w_0.$
By comparison of the cardinality, the set of Coxeter elements and the set $$\{u^{-1}\cdot s_{\beta_n}s_{\beta_{n-1}}\cdots s_{\beta_1}s_{\delta-\rho}|\, u\, \text{is a left prefix of}\, w_0^{W_{S\backslash \{s_{\beta_n}\}}}w_0\}$$ have to be equal as there are $2^n$ left prefix of $w_0^{W_{S\backslash \{s_{\beta_n}\}}}w_0.$
This also forces that $u^{-1}\cdot s_{\beta_n}s_{\beta_{n-1}}\cdots s_{\beta_1}s_{\delta-\rho}=u^{-1}s_{\beta_n}s_{\beta_{n-1}}\cdots s_{\beta_1}s_{\delta-\rho}u$.
(Write $c=s_{\beta_n}s_{\beta_{n-1}}\cdots s_{\beta_1}s_{\delta-\rho}$. Suppose that $u=s_1s_2\cdots s_k$ and $k$ is  minimal such that $s_k\cdot (s_{k-1}\cdots s_1\cdot c)=(s_{k-1}\cdots s_1\cdot c)$, then the set $\{u^{-1}\cdot s_{\beta_n}s_{\beta_{n-1}}\cdots s_{\beta_1}s_{\delta-\rho}$ where $u$ is an element in $W$ whose left descent set  is $\{s_{\beta_n}\}\}$ will have fewer than $2^n$ elements.)

\end{proof}

\begin{theorem}
Every minimal infinite reduced word of the form
$$(\underline{v}s_{\delta-\rho}(s_{\beta_1}s_{\delta-\rho})(s_{\beta_2}s_{\beta_1}s_{\delta-\rho})\cdots(s_{\beta_{n-1}}s_{\beta_{n-2}}\cdots s_{\beta_1}s_{\delta-\rho})\underline{u})^{\infty}$$
where $\underline{uv}$ is a reduced expression of $w_0^{W_{S\backslash \{s_{\beta_n}\}}}w_0$ is an infinite Coxeter element. Conversely every infinite element has a reduced expression of such form.
\end{theorem}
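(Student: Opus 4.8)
The plan is to derive both implications directly from Lemma~\ref{coxlemma}, whose parts (2) and (3) already package all the structural content. Throughout write $c=s_{\beta_n}s_{\beta_{n-1}}\cdots s_{\beta_1}s_{\delta-\rho}$, set $\Delta_1=\Delta\setminus\{\beta_n\}$ so that $W_{\Delta_1}=W_{S\setminus\{s_{\beta_n}\}}$, and let $\underline{y}=s_{\delta-\rho}(s_{\beta_1}s_{\delta-\rho})\cdots(s_{\beta_{n-1}}\cdots s_{\beta_1}s_{\delta-\rho})$ denote the tail appearing in the displayed form, with underlying group element $y$. The pivot of the argument is Lemma~\ref{coxlemma}(2), which says the distinguished translation for $i=n$ satisfies $w_0^{W_{\Delta_1}}w_0\,y=c^{n}$; conjugating this identity and passing to infinite words converts powers of a Coxeter element into an infinite Coxeter element. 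Since $\widetilde{W}$ is irreducible and infinite, every Coxeter element is straight, so $(d^{n})^{\infty}=d^{\infty}$ for any Coxeter element $d$ (this is exactly the well-definedness of $d^{\infty}$ recorded in Section~\ref{prelim}).

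For the forward implication, suppose $\underline{uv}$ is a reduced expression of $w_0^{W_{\Delta_1}}w_0$, so that $u$ is a left prefix of $w_0^{W_{\Delta_1}}w_0$ and $v=u^{-1}w_0^{W_{\Delta_1}}w_0$. The period of the given word is the group element $vyu$, and using $uvy=(uv)y=w_0^{W_{\Delta_1}}w_0\,y=c^{n}$ I compute $vyu=u^{-1}(c^{n})u=(u^{-1}cu)^{n}$. Because $u$ is a left prefix of $w_0^{W_{\Delta_1}}w_0$, Lemma~\ref{coxlemma}(3) identifies $c':=u^{-1}cu$ as a Coxeter element of $\widetilde{W}$, hence straight. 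A length count then shows $\underline{v}\,\underline{y}\,\underline{u}$ is reduced for $vyu$, so the given word is $(vyu)^{\infty}=(c'^{\,n})^{\infty}=c'^{\infty}$, an infinite Coxeter element.

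For the converse, let $c'^{\infty}$ be an arbitrary infinite Coxeter element. By Lemma~\ref{coxlemma}(3) there is a left prefix $u$ of $w_0^{W_{\Delta_1}}w_0$ with $c'=u^{-1}cu$; put $v=u^{-1}w_0^{W_{\Delta_1}}w_0$ and fix reduced expressions $\underline{u},\underline{v}$, so $\underline{uv}$ is reduced for $w_0^{W_{\Delta_1}}w_0$. Then $c'^{\,n}=u^{-1}c^{n}u=u^{-1}(uvy)u=vyu$, and the same reducedness of the cyclic rotation yields $c'^{\infty}=(c'^{\,n})^{\infty}=(vyu)^{\infty}=(\underline{v}\,\underline{y}\,\underline{u})^{\infty}$, which is precisely the displayed form.

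The only genuine verifications, and where I expect the routine obstacle to lie, are the two glue points. First, one must check that the cyclically rotated word $\underline{v}\,\underline{y}\,\underline{u}$ is reduced for $vyu$: this follows from the length identity $\ell(vyu)=\ell(c'^{\,n})=n(n+1)$ together with $\ell(w_0^{W_{\Delta_1}}w_0)=|\Phi^{+}_{\Delta_1,\emptyset}|=\tfrac{n(n+1)}{2}$ and $\ell(\underline{y})=1+2+\cdots+n=\tfrac{n(n+1)}{2}$, so that $\underline{v}\,\underline{y}\,\underline{u}$ has exactly $n(n+1)$ letters. Second, the passage $(c'^{\,n})^{\infty}=c'^{\infty}$ is the straightness reduction noted above. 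Both are immediate once Lemma~\ref{coxlemma} is in hand; the conceptual weight of the result sits entirely in that lemma's parametrization of the Coxeter elements of $\widetilde{W}$ by conjugation along left prefixes of $w_0^{W_{\Delta_1}}w_0$.
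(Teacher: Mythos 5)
Your proposal is correct and takes essentially the same approach as the paper: both directions rest on Lemma \ref{coxlemma}(2)--(3) together with straightness of Coxeter elements. Your only deviation is cosmetic --- you justify the cyclic rotation by a length count showing $\underline{v}\,\underline{y}\,\underline{u}$ is a reduced word for $(u^{-1}cu)^{n}$ with $c=s_{\beta_n}s_{\beta_{n-1}}\cdots s_{\beta_1}s_{\delta-\rho}$, whereas the paper writes $(\underline{v}\,\underline{y}\,\underline{u})^{\infty}=u^{-1}u(\underline{v}\,\underline{y}\,\underline{u})^{\infty}=(u^{-1}c^{n}u)^{\infty}=(u^{-1}cu)^{\infty}$ using the action of $W$ on infinite reduced words.
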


\begin{proof}
The theorem follows from Lemma \ref{coxlemma} (1)-(3) and the computation
$$(\underline{v}s_{\delta-\rho}(s_{\beta_1}s_{\delta-\rho})(s_{\beta_2}s_{\beta_1}s_{\delta-\rho})\cdots(s_{\beta_{n-1}}s_{\beta_{n-2}}\cdots s_{\beta_1}s_{\delta-\rho})\underline{u})^{\infty}$$
$$=u^{-1}u(\underline{v}s_{\delta-\rho}(s_{\beta_1}s_{\delta-\rho})(s_{\beta_2}s_{\beta_1}s_{\delta-\rho})\cdots(s_{\beta_{n-1}}s_{\beta_{n-2}}\cdots s_{\beta_1}s_{\delta-\rho})\underline{u})^{\infty}$$
$$=(u^{-1}(s_{\beta_n}s_{\beta_{n-1}}\cdots s_{\beta_1}s_{\delta-\rho})^{n}u)^{\infty}$$
$$=(u^{-1}s_{\beta_n}s_{\beta_{n-1}}\cdots s_{\beta_1}s_{\delta-\rho}u)^{\infty}.$$
\end{proof}

\begin{remark*}
The above action may not be well-defined for other types. For example in $\widetilde{B}_4$,
$$s_{\beta_2}s_{\beta_3}s_{\beta_2}\cdot (s_{\beta_1}s_{\beta_4}s_{\beta_2}s_{\beta_3}s_{\delta-\rho})=s_{\beta_3}s_{\beta_1}s_{\beta_4}s_{\beta_2}s_{\delta-\rho}$$
but
$$s_{\beta_3}s_{\beta_2}s_{\beta_3}\cdot (s_{\beta_1}s_{\beta_4}s_{\beta_2}s_{\beta_3}s_{\delta-\rho})=s_{\beta_1}s_{\beta_4}s_{\beta_2}s_{\beta_3}s_{\delta-\rho}.$$

For $\widetilde{G_2}$, the following easy calculations describe which minimal infinite reduced words are infinite Coxeter elements.

$(s_{\beta_1}s_{\beta_2}s_{\delta-\rho})^2=s_{\beta_1}s_{\beta_2}s_{\beta_1}s_{\beta_2}s_{\delta-\rho}s_{\beta_2}$. Then set $\underline{v}=s_{\beta_1}s_{\beta_2}s_{\beta_1}s_{\beta_2}, \underline{u}=s_{\beta_2}$. One has $\underline{uv}=w_0^{W_{\{\beta_1\}}}w_0$. So
$(\underline{v}s_{\delta-\rho}\underline{u})^{\infty}=(s_{\beta_1}s_{\beta_2}s_{\delta-\rho})^{\infty}$.

$(s_{\beta_2}s_{\beta_1}s_{\delta-\rho})^2=s_{\beta_2}s_{\beta_1}s_{\beta_2}s_{\delta-\rho}s_{\beta_2}s_{\beta_1}$. Then set $\underline{v}=s_{\beta_2}s_{\beta_1}s_{\beta_2}, \underline{u}=s_{\beta_2}s_{\beta_1}$. One has $\underline{uv}=w_0^{W_{\{\beta_1\}}}w_0$. So
$(\underline{v}s_{\delta-\rho}\underline{u})^{\infty}=(s_{\beta_2}s_{\beta_1}s_{\delta-\rho})^{\infty}$.

$(s_{\beta_1}s_{\delta-\rho}s_{\beta_2})^2=s_{\beta_1}s_{\beta_2}s_{\delta-\rho}s_{\beta_2}s_{\beta_1}s_{\beta_2}$. Then set $\underline{v}=s_{\beta_1}s_{\beta_2}, \underline{u}=s_{\beta_2}s_{\beta_1}s_{\beta_2}$.  One has $\underline{uv}=w_0^{W_{\{\beta_1\}}}w_0$. So
$(\underline{v}s_{\delta-\rho}\underline{u})^{\infty}=(s_{\beta_1}s_{\delta-\rho}s_{\beta_2})^{\infty}$.

$(s_{\delta-\rho}s_{\beta_2}s_{\beta_1})^2=s_{\beta_2}s_{\delta-\rho}s_{\beta_2}s_{\beta_1}s_{\beta_2}s_{\beta_1}$. Then set $\underline{v}=s_{\beta_2}, \underline{u}=s_{\beta_2}s_{\beta_1}s_{\beta_2}s_{\beta_1}$. One has  $\underline{uv}=w_0^{W_{\{\beta_1\}}}w_0$. So
$(\underline{v}s_{\delta-\rho}\underline{u})^{\infty}=(s_{\delta-\rho}s_{\beta_2}s_{\beta_1})^{\infty}$.

From this example, $(s_{\beta_1}s_{\beta_2}s_{\delta-\rho})^{\infty}$ is a minimal infinite reduced word. Note that $s_{\beta_1}s_{\beta_2}s_{\delta-\rho}s_{\beta_1}s_{\beta_2}s_{\delta-\rho}$ is not fully commutative (a word is fully commutative if only trivial braid moves can be applied to it). So minimal infinite reduced word is not necessarily fully commutative. This answers another question in Section 10 of \cite{Lam2}.

For $\widetilde{F}_4$ the following calculations show that infinite Coxeter elements are all minimal infinite reduced words. Set
$$\underline{u}=s_{\beta_2}s_{\beta_3}s_{\beta_2}s_{\beta_4}s_{\beta_3}s_{\beta_1}s_{\beta_2}s_{\beta_3}s_{\beta_2}s_{\beta_4}s_{\beta_3},$$
$$\underline{v}=s_{\beta_2}s_{\beta_1}s_{\beta_2}s_{\beta_3}s_{\beta_4}s_{\beta_2}s_{\beta_3}s_{\beta_1}s_{\beta_2}.$$
Then $\underline{uv}=w_0^{W_{\{\beta_2\}}}w_0$. One has
$$\underline{v}s_{\delta-\rho}s_{\beta_1}s_{\beta_2}s_{\beta_3}s_{\beta_4}s_{\beta_2}s_{\beta_3}s_{\beta_2}s_{\beta_1}s_{\delta-\rho}\underline{u}$$
$$=(s_{\beta_1}s_{\beta_2}s_{\beta_3}s_{\beta_4}s_{\delta-\rho})^6.$$
Hence $(s_{\beta_1}s_{\beta_2}s_{\beta_3}s_{\beta_4}s_{\delta-\rho})^{\infty}=(\underline{v}s_{\delta-\rho}s_{\beta_1}s_{\beta_2}s_{\beta_3}s_{\beta_4}s_{\beta_2}s_{\beta_3}s_{\beta_2}s_{\beta_1}s_{\delta-\rho}\underline{u})^{\infty}$.

Set
$$\underline{u}=s_{\beta_2}s_{\beta_1}s_{\beta_3}s_{\beta_2}s_{\beta_3}s_{\beta_4}s_{\beta_3}s_{\beta_2}s_{\beta_3}s_{\beta_1}s_{\beta_2}s_{\beta_3}s_{\beta_2}s_{\beta_4}s_{\beta_3},$$
$$\underline{v}=s_{\beta_1}s_{\beta_2}s_{\beta_3}s_{\beta_1}s_{\beta_2}.$$
Then $\underline{uv}=w_0^{W_{\{\beta_2\}}}w_0$. One has
$$\underline{v}s_{\delta-\rho}s_{\beta_1}s_{\beta_2}s_{\beta_3}s_{\beta_4}s_{\beta_2}s_{\beta_3}s_{\beta_2}s_{\beta_1}s_{\delta-\rho}\underline{u}$$
$$=(s_{\delta-\rho}s_{\beta_1}s_{\beta_2}s_{\beta_3}s_{\beta_4})^6.$$
Hence $(s_{\delta-\rho}s_{\beta_1}s_{\beta_2}s_{\beta_3}s_{\beta_4})^{\infty}=(\underline{v}s_{\delta-\rho}s_{\beta_1}s_{\beta_2}s_{\beta_3}s_{\beta_4}s_{\beta_2}s_{\beta_3}s_{\beta_2}s_{\beta_1}s_{\delta-\rho}\underline{u})^{\infty}$.

Set
$$\underline{u}=s_{\beta_2}s_{\beta_3}s_{\beta_2}s_{\beta_4}s_{\beta_3}s_{\beta_1}s_{\beta_2}s_{\beta_3}s_{\beta_2}s_{\beta_4}s_{\beta_3}s_{\beta_1}s_{\beta_2}s_{\beta_3},$$
$$\underline{v}=s_{\beta_1}s_{\beta_4}s_{\beta_2}s_{\beta_3}s_{\beta_1}s_{\beta_2}.$$
Then $\underline{uv}=w_0^{W_{\{\beta_2\}}}w_0$. One has
$$\underline{v}s_{\delta-\rho}s_{\beta_1}s_{\beta_2}s_{\beta_3}s_{\beta_4}s_{\beta_2}s_{\beta_3}s_{\beta_2}s_{\beta_1}s_{\delta-\rho}\underline{u}$$
$$=(s_{\delta-\rho}s_{\beta_1}s_{\beta_2}s_{\beta_4}s_{\beta_3})^6.$$
Hence $(s_{\delta-\rho}s_{\beta_1}s_{\beta_2}s_{\beta_4}s_{\beta_3})^{\infty}=(\underline{v}s_{\delta-\rho}s_{\beta_1}s_{\beta_2}s_{\beta_3}s_{\beta_4}s_{\beta_2}s_{\beta_3}s_{\beta_2}s_{\beta_1}s_{\delta-\rho}\underline{u})^{\infty}$.

Set
$$\underline{u}=s_{\beta_2}s_{\beta_3}s_{\beta_2}s_{\beta_4}s_{\beta_3}s_{\beta_1}s_{\beta_2}s_{\beta_3}s_{\beta_2}s_{\beta_4}s_{\beta_3}s_{\beta_1}s_{\beta_2},$$
$$\underline{v}=s_{\beta_3}s_{\beta_1}s_{\beta_4}s_{\beta_2}s_{\beta_3}s_{\beta_1}s_{\beta_2}.$$
Then $\underline{uv}=w_0^{W_{\{\beta_2\}}}w_0$. One has
$$\underline{v}s_{\delta-\rho}s_{\beta_1}s_{\beta_2}s_{\beta_3}s_{\beta_4}s_{\beta_2}s_{\beta_3}s_{\beta_2}s_{\beta_1}s_{\delta-\rho}\underline{u}$$
$$=(s_{\beta_3}s_{\delta-\rho}s_{\beta_1}s_{\beta_2}s_{\beta_4})^6.$$
Hence $(s_{\beta_3}s_{\delta-\rho}s_{\beta_1}s_{\beta_2}s_{\beta_4})^{\infty}=(\underline{v}s_{\delta-\rho}s_{\beta_1}s_{\beta_2}s_{\beta_3}s_{\beta_4}s_{\beta_2}s_{\beta_3}s_{\beta_2}s_{\beta_1}s_{\delta-\rho}\underline{u})^{\infty}$.

Set
$$\underline{u}=s_{\beta_2}s_{\beta_1}s_{\beta_3}s_{\beta_2}s_{\beta_3}s_{\beta_1}s_{\beta_4}s_{\beta_3}s_{\beta_2}s_{\beta_3}s_{\beta_1}s_{\beta_2},$$
$$\underline{v}=s_{\beta_1}s_{\beta_4}s_{\beta_3}s_{\beta_4}s_{\beta_2}s_{\beta_3}s_{\beta_1}s_{\beta_2}.$$
Then $\underline{uv}=w_0^{W_{\{\beta_2\}}}w_0$. One has
$$\underline{v}s_{\delta-\rho}s_{\beta_1}s_{\beta_2}s_{\beta_3}s_{\beta_4}s_{\beta_2}s_{\beta_3}s_{\beta_2}s_{\beta_1}s_{\delta-\rho}\underline{u}$$
$$=(s_{\beta_4}s_{\beta_3}s_{\delta-\rho}s_{\beta_1}s_{\beta_2})^6.$$
Hence $(s_{\beta_4}s_{\beta_3}s_{\delta-\rho}s_{\beta_1}s_{\beta_2})^{\infty}=(\underline{v}s_{\delta-\rho}s_{\beta_1}s_{\beta_2}s_{\beta_3}s_{\beta_4}s_{\beta_2}s_{\beta_3}s_{\beta_2}s_{\beta_1}s_{\delta-\rho}\underline{u})^{\infty}$.

Set
$$\underline{u}=s_{\beta_2}s_{\beta_3}s_{\beta_2}s_{\beta_4}s_{\beta_3}s_{\beta_1}s_{\beta_2}s_{\beta_3}s_{\beta_2}s_{\beta_4}s_{\beta_3}s_{\beta_1},$$
$$\underline{v}=s_{\beta_2}s_{\beta_1}s_{\beta_3}s_{\beta_4}s_{\beta_2}s_{\beta_3}s_{\beta_1}s_{\beta_2}.$$
Then $\underline{uv}=w_0^{W_{\{\beta_2\}}}w_0$. One has
$$\underline{v}s_{\delta-\rho}s_{\beta_1}s_{\beta_2}s_{\beta_3}s_{\beta_4}s_{\beta_2}s_{\beta_3}s_{\beta_2}s_{\beta_1}s_{\delta-\rho}\underline{u}$$
$$=(s_{\beta_2}s_{\delta-\rho}s_{\beta_1}s_{\beta_3}s_{\beta_4})^6.$$
Hence $(s_{\beta_2}s_{\delta-\rho}s_{\beta_1}s_{\beta_3}s_{\beta_4})^{\infty}=(\underline{v}s_{\delta-\rho}s_{\beta_1}s_{\beta_2}s_{\beta_3}s_{\beta_4}s_{\beta_2}s_{\beta_3}s_{\beta_2}s_{\beta_1}s_{\delta-\rho}\underline{u})^{\infty}$.

Set
$$\underline{u}=s_{\beta_2}s_{\beta_1}s_{\beta_3}s_{\beta_2}s_{\beta_3}s_{\beta_1}s_{\beta_4}s_{\beta_3}s_{\beta_2}s_{\beta_3}s_{\beta_1},$$
$$\underline{v}=s_{\beta_4}s_{\beta_2}s_{\beta_1}s_{\beta_3}s_{\beta_4}s_{\beta_2}s_{\beta_3}s_{\beta_1}s_{\beta_2}.$$
Then $\underline{uv}=w_0^{W_{\{\beta_2\}}}w_0$. One has
$$\underline{v}s_{\delta-\rho}s_{\beta_1}s_{\beta_2}s_{\beta_3}s_{\beta_4}s_{\beta_2}s_{\beta_3}s_{\beta_2}s_{\beta_1}s_{\delta-\rho}\underline{u}$$
$$=(s_{\beta_4}s_{\delta-\rho}s_{\beta_2}s_{\beta_1}s_{\beta_3})^6.$$
Hence $(s_{\beta_4}s_{\delta-\rho}s_{\beta_2}s_{\beta_1}s_{\beta_3})^{\infty}=(\underline{v}s_{\delta-\rho}s_{\beta_1}s_{\beta_2}s_{\beta_3}s_{\beta_4}s_{\beta_2}s_{\beta_3}s_{\beta_2}s_{\beta_1}s_{\delta-\rho}\underline{u})^{\infty}$.

Set
$$\underline{u}=s_{\beta_2}s_{\beta_1}s_{\beta_3}s_{\beta_2}s_{\beta_3}s_{\beta_1}s_{\beta_4}s_{\beta_3}s_{\beta_2}s_{\beta_1},$$
$$\underline{v}=s_{\beta_3}s_{\beta_4}s_{\beta_2}s_{\beta_1}s_{\beta_3}s_{\beta_4}s_{\beta_2}s_{\beta_3}s_{\beta_1}s_{\beta_2}.$$
Then $\underline{uv}=w_0^{W_{\{\beta_2\}}}w_0$. One has
$$\underline{v}s_{\delta-\rho}s_{\beta_1}s_{\beta_2}s_{\beta_3}s_{\beta_4}s_{\beta_2}s_{\beta_3}s_{\beta_2}s_{\beta_1}s_{\delta-\rho}\underline{u}$$
$$=(s_{\beta_3}s_{\beta_4}s_{\delta-\rho}s_{\beta_2}s_{\beta_1})^6.$$
Hence $(s_{\beta_3}s_{\beta_4}s_{\delta-\rho}s_{\beta_2}s_{\beta_1})^{\infty}=(\underline{v}s_{\delta-\rho}s_{\beta_1}s_{\beta_2}s_{\beta_3}s_{\beta_4}s_{\beta_2}s_{\beta_3}s_{\beta_2}s_{\beta_1}s_{\delta-\rho}\underline{u})^{\infty}$.

Set
$$\underline{u}=s_{\beta_2}s_{\beta_3}s_{\beta_4}s_{\beta_2}s_{\beta_1}s_{\beta_2}s_{\beta_3}s_{\beta_2}s_{\beta_1},$$
$$\underline{v}=s_{\beta_4}s_{\beta_3}s_{\beta_4}s_{\beta_2}s_{\beta_1}s_{\beta_3}s_{\beta_4}s_{\beta_2}s_{\beta_3}s_{\beta_1}s_{\beta_2}.$$
Then $\underline{uv}=w_0^{W_{\{\beta_2\}}}w_0$. One has
$$\underline{v}s_{\delta-\rho}s_{\beta_1}s_{\beta_2}s_{\beta_3}s_{\beta_4}s_{\beta_2}s_{\beta_3}s_{\beta_2}s_{\beta_1}s_{\delta-\rho}\underline{u}$$
$$=(s_{\beta_4}s_{\beta_3}s_{\beta_2}s_{\delta-\rho}s_{\beta_1})^6.$$
Hence $(s_{\beta_4}s_{\beta_3}s_{\beta_2}s_{\delta-\rho}s_{\beta_1})^{\infty}=(\underline{v}s_{\delta-\rho}s_{\beta_1}s_{\beta_2}s_{\beta_3}s_{\beta_4}s_{\beta_2}s_{\beta_3}s_{\beta_2}s_{\beta_1}s_{\delta-\rho}\underline{u})^{\infty}$.

Set
$$\underline{u}=s_{\beta_2}s_{\beta_1}s_{\beta_3}s_{\beta_2}s_{\beta_3}s_{\beta_1}s_{\beta_4}s_{\beta_3}s_{\beta_2}s_{\beta_3},$$
$$\underline{v}=s_{\beta_2}s_{\beta_4}s_{\beta_1}s_{\beta_2}s_{\beta_3}s_{\beta_4}s_{\beta_2}s_{\beta_3}s_{\beta_1}s_{\beta_2}.$$
Then $\underline{uv}=w_0^{W_{\{\beta_2\}}}w_0$. One has
$$\underline{v}s_{\delta-\rho}s_{\beta_1}s_{\beta_2}s_{\beta_3}s_{\beta_4}s_{\beta_2}s_{\beta_3}s_{\beta_2}s_{\beta_1}s_{\delta-\rho}\underline{u}$$
$$=(s_{\beta_4}s_{\beta_1}s_{\beta_2}s_{\beta_3}s_{\delta-\rho})^6.$$
Hence $(s_{\beta_4}s_{\beta_1}s_{\beta_2}s_{\beta_3}s_{\delta-\rho})^{\infty}=(\underline{v}s_{\delta-\rho}s_{\beta_1}s_{\beta_2}s_{\beta_3}s_{\beta_4}s_{\beta_2}s_{\beta_3}s_{\beta_2}s_{\beta_1}s_{\delta-\rho}\underline{u})^{\infty}$.

Set
$$\underline{u}=s_{\beta_2}s_{\beta_1}s_{\beta_3}s_{\beta_2}s_{\beta_3}s_{\beta_1}s_{\beta_4}s_{\beta_3}s_{\beta_2},$$
$$\underline{v}=s_{\beta_3}s_{\beta_2}s_{\beta_4}s_{\beta_1}s_{\beta_2}s_{\beta_3}s_{\beta_4}s_{\beta_2}s_{\beta_3}s_{\beta_1}s_{\beta_2}.$$
Then $\underline{uv}=w_0^{W_{\{\beta_2\}}}w_0$. One has
$$\underline{v}s_{\delta-\rho}s_{\beta_1}s_{\beta_2}s_{\beta_3}s_{\beta_4}s_{\beta_2}s_{\beta_3}s_{\beta_2}s_{\beta_1}s_{\delta-\rho}\underline{u}$$
$$=(s_{\beta_3}s_{\beta_4}s_{\beta_1}s_{\beta_2}s_{\delta-\rho})^6.$$
Hence $(s_{\beta_3}s_{\beta_4}s_{\beta_1}s_{\beta_2}s_{\delta-\rho})^{\infty}=(\underline{v}s_{\delta-\rho}s_{\beta_1}s_{\beta_2}s_{\beta_3}s_{\beta_4}s_{\beta_2}s_{\beta_3}s_{\beta_2}s_{\beta_1}s_{\delta-\rho}\underline{u})^{\infty}$.

Set
$$\underline{u}=s_{\beta_2}s_{\beta_3}s_{\beta_4}s_{\beta_2}s_{\beta_1}s_{\beta_2}s_{\beta_3}s_{\beta_2},$$
$$\underline{v}=s_{\beta_4}s_{\beta_3}s_{\beta_2}s_{\beta_4}s_{\beta_1}s_{\beta_2}s_{\beta_3}s_{\beta_4}s_{\beta_2}s_{\beta_3}s_{\beta_1}s_{\beta_2}.$$
Then $\underline{uv}=w_0^{W_{\{\beta_2\}}}w_0$. One has
$$\underline{v}s_{\delta-\rho}s_{\beta_1}s_{\beta_2}s_{\beta_3}s_{\beta_4}s_{\beta_2}s_{\beta_3}s_{\beta_2}s_{\beta_1}s_{\delta-\rho}\underline{u}$$
$$=(s_{\beta_4}s_{\beta_3}s_{\beta_1}s_{\beta_2}s_{\delta-\rho})^6.$$
Hence $(s_{\beta_4}s_{\beta_3}s_{\beta_1}s_{\beta_2}s_{\delta-\rho})^{\infty}=(\underline{v}s_{\delta-\rho}s_{\beta_1}s_{\beta_2}s_{\beta_3}s_{\beta_4}s_{\beta_2}s_{\beta_3}s_{\beta_2}s_{\beta_1}s_{\delta-\rho}\underline{u})^{\infty}$.

Set
$$\underline{u}=s_{\beta_2}s_{\beta_1}s_{\beta_3}s_{\beta_2}s_{\beta_3}s_{\beta_1}s_{\beta_4}s_{\beta_3},$$
$$\underline{v}=s_{\beta_2}s_{\beta_3}s_{\beta_2}s_{\beta_4}s_{\beta_1}s_{\beta_2}s_{\beta_3}s_{\beta_4}s_{\beta_2}s_{\beta_3}s_{\beta_1}s_{\beta_2}.$$
Then $\underline{uv}=w_0^{W_{\{\beta_2\}}}w_0$. One has
$$\underline{v}s_{\delta-\rho}s_{\beta_1}s_{\beta_2}s_{\beta_3}s_{\beta_4}s_{\beta_2}s_{\beta_3}s_{\beta_2}s_{\beta_1}s_{\delta-\rho}\underline{u}$$
$$=(s_{\beta_2}s_{\beta_1}s_{\delta-\rho}s_{\beta_3}s_{\beta_4})^6.$$
Hence $(s_{\beta_2}s_{\beta_1}s_{\delta-\rho}s_{\beta_3}s_{\beta_4})^{\infty}=(\underline{v}s_{\delta-\rho}s_{\beta_1}s_{\beta_2}s_{\beta_3}s_{\beta_4}s_{\beta_2}s_{\beta_3}s_{\beta_2}s_{\beta_1}s_{\delta-\rho}\underline{u})^{\infty}$.

Set
$$\underline{u}=s_{\beta_2}s_{\beta_1}s_{\beta_3}s_{\beta_2}s_{\beta_1}s_{\beta_4}s_{\beta_3},$$
$$\underline{v}=s_{\beta_4}s_{\beta_2}s_{\beta_3}s_{\beta_2}s_{\beta_4}s_{\beta_1}s_{\beta_2}s_{\beta_3}s_{\beta_4}s_{\beta_2}s_{\beta_3}s_{\beta_1}s_{\beta_2}.$$
Then $\underline{uv}=w_0^{W_{\{\beta_2\}}}w_0$. One has
$$\underline{v}s_{\delta-\rho}s_{\beta_1}s_{\beta_2}s_{\beta_3}s_{\beta_4}s_{\beta_2}s_{\beta_3}s_{\beta_2}s_{\beta_1}s_{\delta-\rho}\underline{u}$$
$$=(s_{\beta_4}s_{\beta_2}s_{\beta_1}s_{\delta-\rho}s_{\beta_3})^6.$$
Hence $(s_{\beta_4}s_{\beta_2}s_{\beta_1}s_{\delta-\rho}s_{\beta_3})^{\infty}=(\underline{v}s_{\delta-\rho}s_{\beta_1}s_{\beta_2}s_{\beta_3}s_{\beta_4}s_{\beta_2}s_{\beta_3}s_{\beta_2}s_{\beta_1}s_{\delta-\rho}\underline{u})^{\infty}$.

Set
$$\underline{u}=s_{\beta_2}s_{\beta_1}s_{\beta_3}s_{\beta_2}s_{\beta_1}s_{\beta_4},$$
$$\underline{v}=s_{\beta_3}s_{\beta_4}s_{\beta_2}s_{\beta_3}s_{\beta_2}s_{\beta_4}s_{\beta_1}s_{\beta_2}s_{\beta_3}s_{\beta_4}s_{\beta_2}s_{\beta_3}s_{\beta_1}s_{\beta_2}.$$
Then $\underline{uv}=w_0^{W_{\{\beta_2\}}}w_0$. One has
$$\underline{v}s_{\delta-\rho}s_{\beta_1}s_{\beta_2}s_{\beta_3}s_{\beta_4}s_{\beta_2}s_{\beta_3}s_{\beta_2}s_{\beta_1}s_{\delta-\rho}\underline{u}$$
$$=(s_{\beta_3}s_{\beta_4}s_{\beta_2}s_{\beta_1}s_{\delta-\rho})^6.$$
Hence $(s_{\beta_3}s_{\beta_4}s_{\beta_2}s_{\beta_1}s_{\delta-\rho})^{\infty}=(\underline{v}s_{\delta-\rho}s_{\beta_1}s_{\beta_2}s_{\beta_3}s_{\beta_4}s_{\beta_2}s_{\beta_3}s_{\beta_2}s_{\beta_1}s_{\delta-\rho}\underline{u})^{\infty}$.

Set
$$\underline{u}=s_{\beta_2}s_{\beta_1}s_{\beta_3}s_{\beta_2}s_{\beta_1},$$
$$\underline{v}=s_{\beta_4}s_{\beta_3}s_{\beta_4}s_{\beta_2}s_{\beta_3}s_{\beta_2}s_{\beta_4}s_{\beta_1}s_{\beta_2}s_{\beta_3}s_{\beta_4}s_{\beta_2}s_{\beta_3}s_{\beta_1}s_{\beta_2}.$$
Then $\underline{uv}=w_0^{W_{\{\beta_2\}}}w_0$. One has
$$\underline{v}s_{\delta-\rho}s_{\beta_1}s_{\beta_2}s_{\beta_3}s_{\beta_4}s_{\beta_2}s_{\beta_3}s_{\beta_2}s_{\beta_1}s_{\delta-\rho}\underline{u}$$
$$=(s_{\beta_4}s_{\beta_3}s_{\beta_2}s_{\beta_1}s_{\delta-\rho})^6.$$
Hence $(s_{\beta_4}s_{\beta_3}s_{\beta_2}s_{\beta_1}s_{\delta-\rho})^{\infty}=(\underline{v}s_{\delta-\rho}s_{\beta_1}s_{\beta_2}s_{\beta_3}s_{\beta_4}s_{\beta_2}s_{\beta_3}s_{\beta_2}s_{\beta_1}s_{\delta-\rho}\underline{u})^{\infty}$.

\end{remark*}


\begin{thebibliography}{9}




\bibitem{bjornerbrenti}
  A. Bj\"{o}rner and F. Brenti,
  \emph{Combinatorics of Coxeter Groups},
  volume 231 of Graduate Texts in Mathematics,
  Springer, New York,
  2005.

\bibitem{bjornerquotient}
A. Bj\"{o}rner,
Generalized Quotients in Coxeter Groups.
\emph{Transactions of the American Mathematical Society}, 308(1),1988



  \bibitem{DyerReflOrder}
  Matthew Dyer,
  Reflection Orders of Affine Weyl Groups.
  \emph{Preprint},
  2014.


\bibitem{DyerWeakOrder}
  Matthew Dyer,
  On the Weak Order of Coxeter Groups.
  \emph{Canad. J. Math.} 71(2019), no. 2, 299-336.





  \bibitem{biclosedphi}
D. \v{Z}. Dokovi\'{c}, P. Check, and J.Y. H\'{e}e,
On closed subsets of root systems.
 \emph{Canad. Math. Bull.}, 37(3):338--345, 1994



\bibitem{Hum}
  J. Humphreys,
  \emph{Reflection groups and Coxeter groups},
  volume 29 of Cambridge Studies in Advanced Mathematics,
  Cambridge University Press, Cambridge, 1990.

  \bibitem{Kac}
  V. Kac,
  \emph{Infinite dimensional Lie Algebras, 3rd Edition},
  Cambridge University Press, Cambridge, 1990.


\bibitem{Lam2}
T. Lam and P. Pylyavskyy
\newblock {Total positivity for loop groups II: Chevalley generators},
\newblock {\em Transformation Groups 18(1), 2013, 179-231}.

  \bibitem{Lam}
  T. Lam and A. Thomas,
  Infinite reduced words and the Tits boundary of a Coxeter group.
   \emph{International Mathematics Research Notices}, 2015(17), 7690-7733.

   \bibitem{Shi}
   Jian-yi Shi,
   \newblock The Enumeration of Coxeter Elements.
   \newblock {\em Journal of Algebraic Combinatorics},6 (1997), 161-171.

   \bibitem{Shi2}
   Jian-yi Shi,
   \newblock Conjugacy relation on Coxeter elements.
   \newblock {\em Adv. Math.},161 (2001), 1-19.

   \bibitem{speyer}
David S. Speyer,
\newblock Powers of Coxeter elements in infinite groups are reduced,
\newblock {\em Proceedings of the American Mathematical Society,} 137(4), 2009, 1295-1302

\bibitem{ty}
P. Tauvel and R. W. T. Yu
\newblock {\em Lie Algebras And Algebraic Groups},
\newblock {Springer Monographs in Mathematics, Springer-Verlag, 2005.}

 \bibitem{orderpaper}
   Weijia Wang,
\newblock   Twisted Weak Orders of Coxeter Groups,
 \newblock{\em Order}, 36(3), 511--523 (2019)






\bibitem{wang}
Weijia Wang.
\newblock Infinite reduced words, lattice property and braid graph of affine Weyl groups.
\newblock {\em J. Algebra}, 536, 170--214 (2019)
\end{thebibliography}
\end{document}